\documentclass[onefignum,onetabnum]{siamart220329}



\usepackage{amsfonts}
\usepackage{graphicx}
\usepackage[caption=false]{subfig}
\usepackage{epstopdf}
\ifpdf
  \DeclareGraphicsExtensions{.eps,.pdf,.png,.jpg}
\else
  \DeclareGraphicsExtensions{.eps}
\fi

\usepackage{mathtools}
\usepackage{mathrsfs}
\usepackage{amsmath}
\usepackage{amsfonts}
\usepackage{amssymb}
\usepackage{enumitem}
\usepackage{color}

\setenumerate{label=\textup{(\alph*)}}


\newsiamremark{remark}{Remark}
\newsiamremark{assumption}{Assumption}
\Crefname{subsection}{Section}{Sections}
\crefname{subsection}{section}{sections}

\Crefrangeformat{enumi}{Assumptions #3#1#4--#5#2#6}
\crefname{corollary}{Corollary}{Corollaries}
\Crefname{assumption}{Assumption}{Assumptions}
\Crefrangeformat{assumption}{Assumptions #3#1#4--#5#2#6}
\crefname{theorem}{Theorem}{Theorems}
\crefname{lemma}{Lemma}{Lemmas}
\crefname{proposition}{Proposition}{Propositions}
\crefname{enumi}{Assumption}{Assumptions}
\newlist{enumthm}{enumerate}{1}
\setlist[enumthm]{label=\textup{(\alph*)},ref=\theassumption~\textup{(\alph*)}}
\crefalias{enumthmi}{assumption}

\headers{Sample Size Estimates for PDE-Constrained
	Optimization}{Johannes Milz and Michael Ulbrich}

\newcommand{\ourtitle}{Sample Size Estimates for Risk-Neutral Semilinear PDE-Constrained Optimization}
\title{\ourtitle\thanks{August 23, 2023}}

\author{Johannes Milz\thanks{H.\ Milton Stewart School of Industrial and Systems Engineering, Georgia Institute of Technology, Atlanta, Georgia 30332, 
		USA, \email{johannes.milz@isye.gatech.edu}}
\and  Michael Ulbrich\thanks{Technical University of Munich, Department of 
	Mathematics,  Boltzmannstr.\ 3, 85748 Garching b.\ M\"unchen,
	Germany, 
	\email{mulbrich@ma.tum.de}}}

\usepackage{amsopn}

\newcommand{\functionArgument}[1]{\ifthenelse{\equal{#1}{}}  
	{}
	{({#1})}
}

\newcommand{\dsp}{X}
\newcommand{\dualpHzeroone}[3][]
{\langle #2, #3 \rangle_{H^{-1}(\domain), H_0^1(\domain)}}
\newcommand{\spL}[2]{\mathscr{L}(#1, #2)}
\newcommand{\adhsp}{H_{\text{ad}}}
\newcommand{\rad}{r_{\text{ad}}}
\newcommand{\wprox}[3][]{\mathrm{prox}_{#2}\functionArgument{#3}}
\newcommand{\prox}[2]{\wprox[]{#1}{#2}}
\newcommand{\vadcsp}{V_{\text{ad}}}
\newcommand{\norm}[2][2]{\|#2\|_{#1}}
\newcommand{\radius}{R_{\text{ad}}}

\newcommand{\rrhs}{b}

\newcommand{\inner}[3][]{( #2, #3 )_{#1}}

\newcommand{\pobj}{J}
\newcommand{\rpobj}{\pobj}
\newcommand{\erpobj}{F}
\newcommand{\csp}{U}
\newcommand{\adcsp}{\csp_\text{ad}}
\newcommand{\ssp}{Y}


\newcommand{\hsp}{H}
\newcommand{\domain}{D}
\newcommand{\lb}{\mathfrak{l}}
\newcommand{\ub}{\mathfrak{u}}
\newcommand{\rv}{Z}
\newcommand{\rvv}{W}
\DeclareMathOperator*{\argmin}{arg\,min}
\newcommand{\naturals}{\mathbb{N}}
\newcommand{\real}{\mathbb{R}}
\newcommand{\embedding}{\xhookrightarrow{}}


\newcommand{\eu}{\ensuremath{\mathrm{e}}}
\newcommand{\du}{\ensuremath{\mathrm{d}}}
\newcommand{\wpone}{w.p.~$1$}
\newcommand{\tand}{\text{and}}
\newcommand{\tif}{\text{if}}
\newcommand{\tfa}{\text{for all}}
\newcommand{\frechet}{Fr\'echet}
\newcommand{\Caratheodory}{Carath\'eodory}
\newcommand{\hoelder}{H\"older}
\newcommand{\friedrichs}{Friedrichs}
\newcommand{\cnop}{Q}
\newcommand{\nop}{q}

\DeclarePairedDelimiterXPP\cE[1]{\mathbb{E}}[]{}{%
	
	#1}

\newcommand{\MonteArg}[1]{\ifthenelse{\equal{#1}{}}  
	{}
	{[{#1}]}
}

\newcommand{\cA}{\mathcal{A}}
\newcommand{\cF}{\mathcal{F}}

\DeclarePairedDelimiterXPP\Prob[1]{\mathrm{Prob}}(){}{
	
	#1}

\ifpdf
\hypersetup{
  pdftitle={\ourtitle},
  pdfauthor={Johannes Milz and Michael Ulbrich}
}
\fi

\begin{document}

\maketitle

\begin{abstract}
  The sample average approximation (SAA) 
  approach is applied to risk-neutral optimization problems
  governed by semilinear elliptic partial differential equations
  with random inputs.
  After constructing a compact set that contains
  the  SAA critical points, we
  derive nonasymptotic sample size estimates
  for  SAA critical points
  using the covering number approach.
  Thereby, we derive upper bounds on the number of
  samples needed to obtain accurate critical points
  of the risk-neutral PDE-constrained optimization problem
  through SAA critical points.
  We quantify 
  accuracy using expectation and exponential tail bounds.
  Numerical illustrations are presented.
\end{abstract}

\begin{keywords}
  stochastic optimization, 
  PDE-constrained optimization under uncertainty, 
  sample average approximation, Monte Carlo sampling,
  sample complexity, uncertainty quantification
\end{keywords}

\begin{AMS}
  90C15, 90C30, 90C60, 49J20, 49J55, 49K45, 49K20, 35J61
\end{AMS}

\section{Introduction}
\label{sec:intro}
Many objective functions of  stochastic programs involve
expectations of parameterized objective functions in one way or another. 
Large-scale stochastic programs arise in a multitude of applications, such as
machine learning \cite{Lan2020,Shalev-Shwartz2010}, 
feedback stabilization of autonomous systems 
\cite{Kunisch2020}, statistical estimation 
\cite{Huber1967,Royset2019}, and
optimization of  differential equations under uncertainty
\cite{Phelps2016,Wechsung2021}.
When the parameter space is 
high-dimensional, these expectations cannot be
accurately evaluated. A common approach to approximate 
such stochastic programs is  the sample average approximation (SAA) method,
yielding the SAA problem \cite{Kleywegt2002,Shapiro2003,Shapiro2005}. 
The SAA objective function is defined by the sample average of the
parameterized objective function.
The SAA approach has become popular in  the literature on optimization
under uncertainty with partial differential equations (PDEs)
\cite{Hoffhues2020,Roemisch2021,Wechsung2021}. 
The key characteristic of the SAA approach 
as applied to risk-neutral PDE-constrained
optimization is 
the fact that it yields PDE-constrained optimization problems
which can be solved efficiently 
using existing, rapidly converging algorithms.
While the SAA approach is 
easy to use, a central question is: how many samples are sufficient
to obtain approximate critical points for the stochastic program
via SAA critical points with high probability?
An answer to this question also addresses the computational complexity 
of risk-neutral PDE-constrained
optimization problems, as 
potentially expensive simulations of PDEs  are required
for each sample.
In the present manuscript, we answer this question for a class
of risk-neutral semilinear PDE-constrained optimization problems.
Our analysis is inspired by those in 
\cite{Shapiro2003,Shapiro2005,Shapiro2021}.

We derive nonasymptotic sample size estimates
for  SAA critical points of infinite dimensional
optimization problems governed by semilinear PDEs with random inputs. 
We consider the risk-neutral 
PDE-constrained optimization problem
\begin{align}
\label{eq:ocp}
\min_{u \in L^2(\domain)}\, 
(1/2) \cE{\norm[L^2(\domain)]{S(u, \xi)-y_d}^2}
+ (\alpha/2) \norm[L^2(\domain)]{u}^2
+ \psi(u),
\end{align}
where $\alpha > 0$, 
$\domain \subset \real^d$ is a bounded Lipschitz
domain, $y_d \in L^2(\domain)$, 
$L^1(\domain)$ $(L^2(\domain))$
is the Lebesgue space of (square) integrable functions
defined on $\domain$, 
and $\xi$ is a random element mapping from a
complete probability space
to a complete probability space with 
sample space $\Xi$ being a complete, separable metric space.
Moreover,
for each $(u,\xi) \in L^2(\domain) \times \Xi$,
$y = S(u,\xi) \in  H_0^1(\domain)$ solves the state equation
\begin{align}
\label{eq:Feb0320211603}
A(\xi) y + \cnop(y) = \rrhs(\xi) + B(\xi)u.
\end{align}
The Sobolev space $H_0^1(\domain)$ is
formally introduced in \cref{sec:notation}.
We state assumptions on  the semilinear PDE
\eqref{eq:Feb0320211603} in \cref{sec:rnpdeopt}. 
The function $\psi : L^2(\domain) \to (-\infty,\infty]$ in \eqref{eq:ocp}
is proper, convex and lower semicontinuous.  An example for 
$\psi$ is given by $\psi(u) = \gamma \norm[L^1(\domain)]{u}$
if $u \in \csp_0$
and $\psi(u) =  \infty$ otherwise,
where  $\gamma \geq 0$ and
$\csp_0 = \{\, u \in L^2(\domain) \colon \, \lb \leq u \leq 
\ub
\,\}$
with $\lb$, $\ub \in L^2(\domain)$
and $\lb \leq \ub$. This function and its variants arise in 
control device placement applications such as
tidal turbine layout optimization \cite{Funke2016}.
The SAA problem of \eqref{eq:ocp} is given by
\begin{align}
\label{eq:saa}
\min_{u \in L^2(\domain)}\, 
\frac{1}{2N} \sum_{i=1}^N\norm[L^2(\domain)]{S(u, \xi^i)-y_d}^2
+ (\alpha/2) \norm[L^2(\domain)]{u}^2
+ \psi(u),
\end{align}
where	$\xi^1$, $\xi^2, \ldots$ 
are independent identically distributed $\Xi$-valued 
random elements defined on a common complete probability
space $(\Omega, \cF, P)$
and  each $\xi^i$ has the  same distribution
as  $\xi$. 
Here, $N \in \naturals$ is the sample size.
Critical points of the SAA problem 
\eqref{eq:saa} can be efficiently computed using semismooth
Newton methods \cite{Ulbrich2011,Stadler2009,Mannel2020}.

The feasible set, 
$\{\, u \in L^2(\domain) \colon\, \psi(u) < \infty \,\}$,
is generally non-compact.
The lack of compactness
and the nonconvexity of  
the risk-neutral PDE-constrained optimization problem \eqref{eq:ocp} 
and its SAA problems 
complicate the derivation of sample size estimates,
as these are typically derived using covering numbers of the feasible set
\cite{Cucker2002,Kankova1978,Shapiro2003,Shapiro2005,Royset2019}.
Sample size estimates can be established for convex stochastic
progams without the covering number approach
\cite{Guigues2017,Shalev-Shwartz2010}. However, 
the risk-neutral PDE-constrained optimization problem \eqref{eq:ocp} 
and its SAA problems are  nonconvex.
For deriving sample size estimates,
our initial observation is that critical points of the SAA problem
\eqref{eq:saa} are contained in
a compact subset of the feasible set.
To construct the compact set, we use
optimality conditions, PDE stability estimates,
and higher regularity
of the reduced parameterized objective function's gradient. 
Utilizing the covering numbers 
for Sobolev function classes established in
\cite{Birman1967,Birman1980},
we derive sample size estimates using arguments similar to those 
in
\cite{Shapiro2003,Shapiro2005,%
	Shapiro2008,Shapiro2021,Royset2019,Cucker2002,Kankova1978,Mei2018}. 
However, our
focus is the analysis of SAA critical points as opposed to SAA solutions, as
risk-neutral semilinear PDE-constrained optimization problems are 
nonconvex.
For classes of risk-neutral linear elliptic PDE-constrained optimizations, 
SAA solutions are analyzed in
\cite{Hoffhues2020,Martin2021,Milz2022c,Milz2021,Roemisch2021}.
However, the analysis
in \cite{Milz2021} does not generalize
to nonconvex problems. 

Alternative approximation approaches 
for risk-neutral PDE-constrained optimization problems
include, for example, quasi-Monte Carlo sampling \cite{Guth2019}, 
low-rank tensor methods \cite{Benner2020,Garreis2017}, and
stochastic collocation \cite{Borzi2011,Kouri2013,Tiesler2012}.
	While we are unaware of a systematic theoretical and/or empirical study
	comparing, for example, Monte Carlo approaches, 
	quasi-Monte Carlo sampling techniques, and sparse grid schemes
	as applied to risk-neutral nonconvex 
	PDE-constrained optimization, we provide a brief comparision of these
	approaches in terms of characteristics used for their theoretical
	convergence anayses.
	Sparse grid-based discretizations may result in nonconvex
	optimization problems  and their analysis requires smoothness properties
	with respect to the parameters and independent random variables
	with Lebesgue densities, but the methods in
        \cite{Kouri2014,Kouri2013,Zahr2019}, which combine sparse grid
	techniques with trust-region schemes, perform very well.
        We refer the reader to 
	\cite{Chen2015} for the analysis of sparse grid-based approximations
	of finite dimensional stochastic programs.
	Monte Carlo sample-based approximations  preserve
	convexity and only require mild integrability properties 
	with respect to possibly infinitely many parameters, 
	but are deemed to be quite slowly convergent.  
	For finite dimensional stochastic programs, it is known
	that quasi-Monte Carlo methods can outperform Monte Carlo sampling
	techniques provided that the integrands satisfy certain 
	regularity conditions \cite[pp.\ 185--189]{Shapiro2021}.
	The quasi-Monte Carlo approach is analyzed in 
	\cite{Guth2019} as applied to risk-neutral 
	linear-quadratic control problems with PDEs, an important
	class of risk-neutral convex PDE-constrained optimization problems.
	As with every numerical scheme, each approximation approach
	for risk-neutral PDE-constrained optimization has its advantages
	and disadvantages. The Monte Carlo sample-based 
	scheme is arguably the most widely used approach to approximating
	stochastic programs. This and the fact that
	it yields standard PDE-constrained  optimization problems build our
	main motivation to establishing sample size estimates.

\section*{Outline of the paper}
We discuss preliminaries and define further notation in
\cref{sec:notation}. A class of risk-neutral semilinear 
PDE-constrained optimization problems is introduced in 
\cref{sec:rnpdeopt}.
PDE stability estimates are derived in
\cref{subsect:semilinearpdes}. In \cref{subsect:derivative}, 
we compute the derivative of the expectation function
and demonstrate the Lipschitz continuity of its gradient in 
\cref{subsec:lipschitz}. The focus of our results in 
\cref{sec:rnpdeopt} is to make the PDE stability estimates'
dependence  on problem-dependent parameters explicit.
\Cref{subsect:existence} discusses
the existence of solutions. 
We combine the results established in
\cref{sec:rnpdeopt} to construct a compact set containing the SAA critical
points in \cref{sec:compatset}. The covering numbers of Sobolev function
classes used to establish nonasymptotic finite sample size estimates
in \cref{sec:samplesizeestimates} are provided in \cref{sec:covering}. 
In \cref{sec:discussion}, we summarize our contributions, put
our approach into perspective, and comment on potential improvements
of our sample size estimates with respect to 
the dimension  of the computational domain.
In the appendices, we discuss auxiliary
results used in \cref{sec:samplesizeestimates} to derive the sample size
estimates. \Cref{sec:setinclusion} discusses the measurability of set
inclusions and \cref{sec:subgaussianbounds} provides sub-Gaussian-type 
expectation and tail bounds of maxima of Hilbert space-valued random vectors.
The  bounds are used in \cref{sect:uniformtailbounds} to derive
uniform expectation and 
exponential tail bounds for normal maps in Hilbert spaces.
These bounds are applied in \cref{sec:samplesizeestimates}
for obtaining our nonasymptotic sample size estimates
for SAA critical points.

\section{Preliminaries and further notation}
\label{sec:notation}

Relationships between random variables, vectors, 
and elements hold with
probability one (\wpone) if not specified otherwise.
Metric spaces are equipped with their Borel $\sigma$-field.
Let $\dsp$, $\dsp_1$ and $\dsp_2$ be real Banach spaces. 
The space of bounded, linear operators from $\dsp_1$
to $\dsp_2$ is denoted by
$\spL{\dsp_1}{\dsp_2}$. We define
$\dsp^* = \spL{\dsp}{\real}$.
The adjoint operator of 
$\Upsilon \in \spL{\dsp_1}{\dsp_2}$
is denoted by $\Upsilon^* \in \spL{\dsp_2^*}{\dsp_1^*}$.
Throughout the text, 
$(\Theta, \mathcal{A}, \mu)$ is a complete
probability space.
An operator-valued mapping $\Upsilon : \Theta \to \spL{\dsp_1}{\dsp_2}$
is called uniformly measurable
if there exists a sequence of simple operators
$\Upsilon_k : \Theta  \to \spL{\dsp_1}{\dsp_2}$
such that $\Upsilon_k(\theta)$
converges to
$\Upsilon(\theta)$ 
in $\spL{\dsp_1}{\dsp_2}$
as $k \to \infty$ 
for all $\theta \in \Theta$;
cf.\ \cite[Def.\ 3.5.5]{Hille1957}.
A function $\upsilon : \Theta   \to \dsp$ is
strongly measurable if there exists a sequence
of simple functions $\upsilon_k : \Theta  \to \dsp $
such that $\upsilon_k(\theta) \to \upsilon(\theta)$
as $k \to \infty$ for all $\theta \in \Theta$
\cite[Def.\ 1.1.4]{Hytoenen2016}.
If $\dsp$ is separable, then
$\upsilon : \Theta \to \dsp$ is strongly measurable
if and only if it is measurable
\cite[Cor.\ 1.1.2 and Thm.\ 1.1.6]{Hytoenen2016}.
	Let $\Gamma \colon \Theta \rightrightarrows \dsp$
be a set-valued mapping with closed images.
The mapping $\Gamma$ is called  measurable
if $\Gamma^{-1}(V) \in \cA$ for each open set
$V \subset X$ \cite[Def.\ 8.1.1]{Aubin2009}.
Here, $\Gamma^{-1}$ is the inverse image of $\Gamma$.
An operator $\Upsilon  \in \spL{\dsp_1}{\dsp_2}$ is compact
if $\Upsilon(V) \subset \dsp_2$ is precompact for each 
bounded set $V \subset \dsp_1$.
Let $\ssp$ be a complete metric space. 
A map $\Phi : \ssp \times \Theta \to \dsp$
is a \Caratheodory\ function if $\Phi(\cdot, \theta)$ is continuous
for all $\theta \in \Theta$ and $\Phi(y,\cdot)$ is measurable
for all $y \in \ssp$ \cite[p.\ 311]{Aubin2009}.
The \frechet\ derivative of a mapping $h$
with respect to $y$
is denoted by  $h_y$.
For a real Hilbert space $\hsp$, 
$\inner[\hsp]{\cdot}{\cdot}$ is its inner product
and $\norm[\hsp]{h} = \inner[\hsp]{h}{h}^{1/2}$ its norm.
For a convex, lower semicontinuous, 
proper function $\varphi : \hsp \to (-\infty,\infty]$, 
the proximity operator $\prox{\varphi}{}:\hsp \to \hsp$ of $\varphi$ is defined by
(see  \cite[Def.\ 12.23]{Bauschke2011})
\begin{align*}
\prox{\varphi}{v}
= \argmin_{w\in\hsp}\, 
\varphi(w) + (1/2)\norm[\hsp]{v-w}^2.
\end{align*}
Throughout the text, $\domain \subset \real^d$ is a bounded
Lipschitz domain.
The space $L^2(\domain)$ is identified with its dual.
The Sobolev space $H^1(\domain)$ is defined by
all $L^2(\domain)$-functions with square integrable weak derivatives
and $H_0^1(\domain)$ consists all
$v \in H^1(\domain)$ with zero boundary traces.
We define $H^{-1}(\domain) = H_0^1(\domain)^*$.
The dual pairing 
between $H^{-1}(\domain)$ and $H_0^1(\domain)$ is denoted by
$\dualpHzeroone{\cdot}{\cdot}$.
We equip $H_0^1(\domain)$
with 
$\norm[H_0^1(\domain)]{v} = \norm[L^2(\domain)]{\norm[2]{\nabla v}}$
and  $H^1(\domain)$ with 
$\norm[H^1(\domain)]{v} = (\norm[L^2(\domain)]{v}^2 +
\norm[H_0^1(\domain)]{v}^2)^{1/2}$.
Here,  $\nabla v$ is the weak gradient of $v$
and $\norm[2]{\cdot}$ is the Euclidean norm on $\real^d$.
Throughout the text,  $\iota : H_0^1(\domain) \to L^2(\domain)$
given by $\iota v  = v$ is the embedding operator of 
the compact embedding $H_0^1(\domain) \embedding L^2(\domain)$
and  
$C_\domain$ is \friedrichs' constant, the operator norm of $\iota$.
We have 	$C_\domain =
\norm[\spL{L^2(\domain)}{H^{-1}(\domain)}]{\iota^*}$.
We denote by $|\domain|$ the Lebesgue measure 
of $\domain$ and by $\bar \domain$ its closure.
We define $C^{0,1}(\bar{\domain})$ as the space of
Lipschitz continuous real-valued functions defined on
$\bar{\domain}$ and equip it with the  usual norm
$\norm[C^{0,1}(\bar \domain)]{\cdot}$
defined in \cite[p.\ 16]{Hinze2009}.

Let $(\dsp, \norm[\dsp]{\cdot})$  be a 
normed space, let $\dsp_0 \subset \dsp$ 
be nonempty and totally bounded, and let $\nu > 0$. 
The $\nu$-covering number 
$\mathcal{N}(\nu; \dsp_0, \norm[\dsp]{\cdot})$
is the minimal number of closed
$\norm[\dsp]{\cdot}$-balls  
with radius $\nu$ in $\dsp$ needed to cover 
$\dsp_0$  
(cf.\ \cite[pp.\ 87--88]{Tikhomirov1993}).
This notion of covering
numbers does not require the centers of the $\norm[\dsp]{\cdot}$-balls
be contained in $\dsp_0$.

\section{Risk-neutral semilinear PDE-constrained optimization}
\label{sec:rnpdeopt}

We impose conditions on the parameterized 
PDE \eqref{eq:Feb0320211603} which
ensure the existence and uniqueness of solutions.
These conditions ensure that the semilinear PDE
\eqref{eq:Feb0320211603} is a 
monotone operator equation and hence the existence
of solutions and the stability estimates 
can be established using the Minty--Browder theorem
\cite[Thm.\ 26.A]{Zeidler1990}, for example.
Semilinear PDE-constrained optimization problems
are analyzed, for example, in 
\cite{Garreis2019a,Geiersbach2020,Kouri2020,Troeltzsch2010,Ulbrich2011}.
In this section, our contributions are
to address some measurability questions, 
and to derive stability estimates and a bound on an
objective function's Lipschitz constant 
with the dependence on problem data's characteristics made explicit. 
Even though our derivations are built on standard techniques,
the stability estimates are needed for establishing our 
nonasymptotic sample size estimates (see \cref{sec:samplesizeestimates}).

We define $\erpobj : L^2(\domain) \to \real$
and  $\hat{\erpobj}_N : L^2(\domain) \to \real$ by
\begin{align}
\label{eq:efuns}
\erpobj(u) = (1/2) \cE{\norm[L^2(\domain)]{\iota S(u, \xi)-y_d}^2}
\quad \tand \quad 
\hat{\erpobj}_N(u) = \frac{1}{2N} \sum_{i=1}^N 
\norm[L^2(\domain)]{\iota S(u, \xi^i)-y_d}^2.
\end{align}
As opposed to the problem formulations in \eqref{eq:ocp}
and \eqref{eq:saa},
we make here the use of the embedding operator $\iota$ explicit.
Since the random elements $\xi^1, \xi^2, \ldots$ are defined on
the complete probability space $(\Omega, \cF, P)$, we can view
$\hat{\erpobj}_N$ as a function defined on 
$L^2(\domain) \times \Omega$.
However, we often omit the second argument.

We define the feasible set 
$\adcsp = \{\, u \in L^2(\domain) \colon\, \psi(u) < \infty \,\}$.
\begin{assumption}[{Control regularization and feasible set}]
	\label{ass:adcspbounded}
	\begin{enumthm}[nosep,leftmargin=*]
		\item 
		\label{itm:psi}
		The function 
		$\psi : L^2(\domain) \to (-\infty,\infty]$
		is proper, convex and lower semicontinuous.
		\item 
		\label{itm:adcspbounded}
		For some $\rad \in (0,\infty)$, 
		$\norm[L^2(\domain)]{u} \leq \rad$
		for all $u \in \adcsp$.
	\end{enumthm}
\end{assumption}

\subsection{Semilinear PDEs with random inputs}
\label{subsect:semilinearpdes}

We impose conditions on the data defining the semilinear PDE
\eqref{eq:Feb0320211603}
based on  those used in 
\cite[Assumptions (3.1)--(3.3)]{Kouri2020},
\cite[sect.\ 9.1]{Ulbrich2011}, and
\cite[Assumption 2.2]{Garreis2019a}.

\begin{assumption}[{Semilinear PDE: Problem data}]
	\label{assumption:pde}
	\begin{enumthm}[nosep,leftmargin=*]
		\item 
		\label{itm:domain}
		$\domain \subset \real^d$ is a bounded Lipschitz domain
		with $d \in \{2,3\}$.
		\item 
		\label{itm:A}
		$A :  \Xi \to \spL{H_0^1(\domain)}{H^{-1}(\domain)}$
		is uniformly measurable.
		There exists a constant $\kappa_{\min} > 0$
		such that  $A(\xi)$ is self-adjoint
		and 
		$\dualpHzeroone{A(\xi)y}{y} \geq 
		\kappa_{\min}\norm[H_0^1(\domain)]{y}^2$
		for all $y \in H_0^1(\domain)$
		and each $\xi \in \Xi$.
		
		\item 
		\label{itm:rrhs}
		$\rrhs : \Xi \to H^{-1}(\domain)$
		and 
		$g   : \Xi \to  C^{0, 1}(\bar \domain)$
		are strongly measurable
		and there exist constants $\rrhs_{\max}$, $g_{\max} > 0$
		such that
		$\norm[H^{-1}(\domain)]{\rrhs(\xi)} \leq \rrhs_{\max}$
		and 
		$\norm[C^{0, 1}(\bar \domain)]{g(\xi)} \leq g_{\max}$
		for each $\xi \in \Xi$.
		
		\item 
		\label{itm:B}
		For each  $\xi \in \Xi$, 
		$B(\xi)  \in \spL{L^2(\domain)}{H^{-1}(\domain)}$
		is defined by
		\begin{align}
		\label{eq:B}
		\dualpHzeroone{B(\xi)u}{v} = \inner[L^2(\domain)]{g(\xi)u}{v}.
		\end{align}
		
		\item 
		\label{itm:nonlinearity}
		We define $\cnop : H_0^1(\domain) \to H^{-1}(\domain)$
		by
		$
		\dualpHzeroone{\cnop(y)}{v} 
		= \inner[L^2(\domain)]{\nop(y)}{v}
		$.
		The function $\nop: \real \to \real$ is nondecreasing, 
		twice continuously differentiable, $\nop(0) = 0$, 
		and 
		$|\nop''(t)| \leq c_{\nop} + d_{\nop} |t|^{p-3}$
		for all $t \in \real$,
		where $c_{\nop}$, $d_{\nop} \geq 0$
		and $p \in (3,\infty)$ if $d = 2$
		and $p \in (3, 6]$ if $d = 3$.
	\end{enumthm}
\end{assumption}

\Cref{itm:domain,itm:nonlinearity} ensure that the embedding
$H_0^1(\domain) \embedding L^p(\domain)$ is continuous
\cite[Thm.\ 1.14]{Hinze2009}
and that $\nop$ is twice continuously differentiable 
as a superposition operator from $L^p(\domain)$
to $L^{p^*}(\domain)$ \cite[p.\ 202]{Ulbrich2011}, where
$p^* \in [1,\infty)$ fulfills $1/p + 1/p^* = 1$.
Hence, $\cnop$ is  twice continuously differentiable 
from $H_0^1(\domain)$ to $H^{-1}(\domain)$.
Moreover $\cnop$ is monotone.
Hence \Cref{assumption:pde} ensures that
for each $\xi \in \Xi$, the operator
$H_0^1(\domain) \ni y \mapsto A(\xi) y + \cnop(y) \in H^{-1}(\domain)$
is  continuous and strongly monotone with parameter $\kappa_{\min}$
and hence coercive \cite[p.\ 501]{Zeidler1990}.
We denote by $c_p > 0$  the embedding constant of the embedding 
$H_0^1(\domain) \embedding L^p(\domain)$. 
Under \Cref{itm:domain,itm:nonlinearity}, 
the constant $c_p$ is finite since $d \in \{2,3\}$ and $p \in [1,6]$
\cite[Thm.\ 1.14]{Hinze2009}.
\Cref{lem:properties_B} establishes basic properties
of the mapping $B$. In particular, we 
establish its uniform measurability and some of its compactness properties.
\begin{lemma}
	\label{lem:properties_B}
	If \Cref{itm:domain,itm:rrhs,itm:B}  hold, then
	for all $\xi \in \Xi$, 
	\begin{enumerate}[nosep,leftmargin=*]
		\item 
		\label{itm:Bbounded}
		$B : \Xi \to \spL{L^2(\domain)}{H^{-1}(\domain)}$
		is uniformly measurable
		and 	
		$$\norm[\spL{L^2(\domain)}{H^{-1}(\domain)}]{B(\xi)} 
		\leq C_\domain\norm[C^{0,1}(\bar{\domain})]{g(\xi)},$$ 
		\item 
		\label{itm:Badjointcompact}
		$B(\xi)^*v = \iota[g(\xi)v] = g(\xi) v$
		for all $v \in H_0^1(\domain)$, 
		and $B(\xi)^*$ and $B(\xi)$ are compact.
	\end{enumerate}
\end{lemma}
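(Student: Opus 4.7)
The plan is to treat the two items separately but to set up a useful factorization first: for each $\xi \in \Xi$, write $B(\xi) = \iota^\ast M_{g(\xi)}$, where $M_{g(\xi)}: L^2(\domain) \to L^2(\domain)$ is multiplication by $g(\xi)$ and $\iota^\ast: L^2(\domain) \to H^{-1}(\domain)$ is the adjoint embedding. Indeed, for $u \in L^2(\domain)$ and $v \in H_0^1(\domain)$ one has $\dualpHzeroone{\iota^\ast M_{g(\xi)} u}{v} = \inner[L^2(\domain)]{g(\xi) u}{\iota v} = \inner[L^2(\domain)]{g(\xi) u}{v}$, matching the definition \eqref{eq:B}.

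For the norm bound in \ref{itm:Bbounded}, I would use this factorization together with the estimate $\|M_{g(\xi)}\|_{\spL{L^2(\domain)}{L^2(\domain)}} \leq \|g(\xi)\|_{L^\infty(\domain)} \leq \|g(\xi)\|_{C^{0,1}(\bar\domain)}$ and $\|\iota^\ast\|_{\spL{L^2(\domain)}{H^{-1}(\domain)}} = C_\domain$ stated in \cref{sec:notation}; submultiplicativity of the operator norm delivers the claim. For uniform measurability I would invoke strong measurability of $g: \Xi \to C^{0,1}(\bar\domain)$ to pick simple functions $g_k: \Xi \to C^{0,1}(\bar\domain)$ with $g_k(\xi) \to g(\xi)$ in $C^{0,1}(\bar\domain)$ for every $\xi \in \Xi$. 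Setting $B_k(\xi) := \iota^\ast M_{g_k(\xi)}$ yields simple operator-valued functions (they take only finitely many values), and the norm estimate applied to $g(\xi) - g_k(\xi)$ gives $\|B(\xi) - B_k(\xi)\|_{\spL{L^2(\domain)}{H^{-1}(\domain)}} \leq C_\domain \|g(\xi) - g_k(\xi)\|_{C^{0,1}(\bar\domain)} \to 0$, matching the definition of uniform measurability in \cref{sec:notation}.

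For \ref{itm:Badjointcompact}, I would identify $H^{-1}(\domain)^\ast$ with $H_0^1(\domain)$ and $L^2(\domain)^\ast$ with $L^2(\domain)$, and then compute for $v \in H_0^1(\domain)$ and $u \in L^2(\domain)$: $\inner[L^2(\domain)]{B(\xi)^\ast v}{u} = \dualpHzeroone{B(\xi) u}{v} = \inner[L^2(\domain)]{g(\xi) u}{v} = \inner[L^2(\domain)]{u}{g(\xi) v}$. Since $g(\xi) \in C^{0,1}(\bar\domain)$ is bounded and $v \in H_0^1(\domain) \subset L^2(\domain)$, the product $g(\xi) v$ lies in $L^2(\domain)$, and moreover $g(\xi) v \in H_0^1(\domain)$ because multiplication by a Lipschitz function preserves $H_0^1$; this justifies writing $B(\xi)^\ast v = \iota[g(\xi) v] = g(\xi) v$. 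Compactness then follows by viewing $B(\xi)^\ast$ as the composition $v \mapsto \iota v \mapsto g(\xi) \cdot \iota v$ of the compact embedding $\iota: H_0^1(\domain) \to L^2(\domain)$ with the bounded multiplication operator $M_{g(\xi)}: L^2(\domain) \to L^2(\domain)$, so $B(\xi)^\ast$ is compact; Schauder's theorem then gives compactness of $B(\xi)$.

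The only non-routine step I anticipate is confirming uniform measurability cleanly, since it requires invoking the particular notion in \cref{sec:notation}; but this reduces to the approximation property of $g$ through the bounded linear assignment $g \mapsto \iota^\ast M_g$, so no deeper obstacle arises.
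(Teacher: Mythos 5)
Your proposal is correct and follows essentially the same route as the paper: the norm bound and uniform measurability come from the bounded linear assignment $g \mapsto \iota^*M_g$ composed with the strong measurability of $g$ (the paper cites the chain rule for strong measurability where you construct the simple approximations explicitly), and compactness of $B(\xi)^*$ is the standard compact-times-bounded factorization followed by Schauder's theorem. The only cosmetic difference is that you factor $B(\xi)^*$ as $M_{g(\xi)}\circ\iota$ through $L^2(\domain)$ while the paper factors it as $\iota$ composed with the $H_0^1(\domain)\to H_0^1(\domain)$ multiplier from \Cref{lem:Grisvard2011}; both are valid, and you still correctly invoke the Lipschitz multiplier property to justify $g(\xi)v\in H_0^1(\domain)$ in the identity $B(\xi)^*v=\iota[g(\xi)v]$.
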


We prove \Cref{lem:properties_B} using \Cref{lem:Grisvard2011}. 
\Cref{lem:Grisvard2011} establishes an explicit continuity constant
of a certain bilinear mapping in terms of \friedrichs' constant.
\begin{lemma}
	\label{lem:Grisvard2011}
	Let $\domain \subset \real^d$ be a bounded Lipschitz domain.
	If $v \in C^{0,1}(\bar{\domain})$
	and $w \in H_0^1(\domain)$, 
	then $vw\in H_0^1(\domain)$
	and
	$
	\norm[H_0^1(\domain)]{vw} \leq (C_\domain+1)\norm[C^{0,1}(\bar{\domain})]{v}
	\norm[H_0^1(\domain)]{w}
	$.
\end{lemma}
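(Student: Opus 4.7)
The strategy is to first verify the product rule for the weak gradient, then estimate the $H_0^1(\domain)$-norm via the triangle inequality and \friedrichs' inequality, and finally deduce the zero boundary trace by density.

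The plan is to begin by establishing the product rule
\begin{equation*}
\nabla(vw) \,=\, v\,\nabla w + w\,\nabla v \quad \text{a.e.\ in } \domain.
\end{equation*}
By Rademacher's theorem, $v \in C^{0,1}(\bar\domain)$ is differentiable almost everywhere with $\nabla v \in L^\infty(\domain;\real^d)$ and $\norm[L^\infty(\domain)]{\norm[2]{\nabla v}}$ bounded by the Lipschitz seminorm of $v$. For $w \in C_c^\infty(\domain)$ the identity is a classical consequence of integration by parts against a test function, using that a Lipschitz multiplier preserves $H^1$-regularity. For general $w \in H_0^1(\domain)$, I would approximate by $w_n \in C_c^\infty(\domain)$ with $w_n \to w$ in $H_0^1(\domain)$ and pass to the $L^2$-limit in each summand, exploiting that $v$ and $\nabla v$ act as bounded multipliers.

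Given the product rule, the triangle inequality in $L^2(\domain;\real^d)$ yields
\begin{equation*}
\norm[H_0^1(\domain)]{vw} \leq \norm[L^\infty(\domain)]{v}\,\norm[H_0^1(\domain)]{w} + \norm[L^\infty(\domain)]{\norm[2]{\nabla v}}\,\norm[L^2(\domain)]{w}.
\end{equation*}
\friedrichs' inequality gives $\norm[L^2(\domain)]{w} \leq C_\domain \norm[H_0^1(\domain)]{w}$, and by \cite[p.\ 16]{Hinze2009} the norm $\norm[C^{0,1}(\bar\domain)]{v}$ equals $\norm[L^\infty(\domain)]{v} + \norm[L^\infty(\domain)]{\norm[2]{\nabla v}}$. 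Combining these with the elementary bound $a + C_\domain b \leq (1 + C_\domain)(a+b)$ for $a,b \geq 0$ yields the stated constant $(C_\domain + 1)$.

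For the zero-trace property, each approximating product $v w_n$ is Lipschitz on $\bar\domain$ with support contained in $\mathrm{supp}(w_n) \Subset \domain$, hence belongs to $H_0^1(\domain)$ (e.g.\ via mollification). Applying the estimate above to $v(w - w_n)$ shows $v w_n \to v w$ in $H^1(\domain)$; closedness of $H_0^1(\domain)$ in $H^1(\domain)$ then forces $vw \in H_0^1(\domain)$. The main obstacle is the careful justification of the distributional product rule and the accompanying limiting argument; once this is secured, the remainder is bookkeeping of constants in the $C^{0,1}(\bar\domain)$-norm together with one application of \friedrichs' inequality.
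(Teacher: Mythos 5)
Your proof is correct and, for the norm estimate, follows essentially the same route as the paper: product rule for the weak gradient, triangle/H\"older estimate, and one application of \friedrichs' inequality to absorb the $\norm[L^2(\domain)]{w}$ term into $C_\domain\norm[H_0^1(\domain)]{w}$, yielding the constant $C_\domain+1$. Where you diverge is in establishing the membership $vw\in H_0^1(\domain)$ and the validity of the product rule itself: the paper simply cites \cite[Thm.\ 1.4.1.2]{Grisvard2011} for both, whereas you give a self-contained argument via Rademacher's theorem and density of $C_c^\infty(\domain)$ in $H_0^1(\domain)$, using closedness of the weak gradient to pass to the limit and closedness of $H_0^1(\domain)$ in $H^1(\domain)$ to preserve the zero trace. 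Your route buys independence from the external reference at the cost of a page of standard approximation arguments; the paper's citation is shorter but less transparent. One cosmetic imprecision: the $C^{0,1}(\bar\domain)$-norm of \cite[p.\ 16]{Hinze2009} is the sup-norm plus the Lipschitz \emph{seminorm}, which on a nonconvex Lipschitz domain may strictly exceed $\norm[L^\infty(\domain)]{v}+\norm[L^\infty(\domain)]{\norm[2]{\nabla v}}$; only the inequality $\norm[L^\infty(\domain)]{\norm[2]{\nabla v}}\leq\norm[C^{0,1}(\bar{\domain})]{v}$ is available, but that is exactly the direction your argument uses, so nothing breaks.
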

\begin{proof}
	Using \cite[Thm.\ 1.4.1.2]{Grisvard2011}, 
	we have $vw\in H_0^1(\domain)$.
	Since $v$ is Lipschitz continuous, 
	we have
	$\norm[L^\infty(\domain)]{\norm[2]{\nabla v}} 
	\leq \norm[C^{0,1}(\bar{\domain})]{v}$.
	Using $\nabla[vw] = w\nabla v  + v \nabla w$
	and the \hoelder\ inequality, 
	we obtain
	$\norm[H_0^1(\domain)]{vw} \leq 
	\norm[C^{0,1}(\bar{\domain})]{v}(\norm[L^2(\domain)]{w}+
	\norm[L^2(\domain)]{\norm[2]{\nabla w}})
	$.
	Combined
	with \friedrichs' inequality, we obtain the stability estimate.
\end{proof}

\begin{proof}[{Proof of \Cref{lem:properties_B}}]
	Let us fix $u \in L^2(\domain)$, $v \in H_0^1(\domain)$,
	and $\xi \in \Xi$.
	\begin{enumerate}[wide,nosep]
		\item  
		We define $\varrho : C^{0,1}(\bar{\domain}) \to
		\spL{L^2(\domain)}{H^{-1}(\domain)}$
		by
		$\dualpHzeroone{\varrho(g)u}{v} 
		= \inner[L^2(\domain)]{gu}{v}$.
		The map $\varrho$ is Lipschitz continuous
		with Lipschitz constant $C_\domain$.
		Combined with $B = \varrho \circ g$ 
		(see \eqref{eq:B})
		and the strong measurability of $g$,
		we find that
		$B$ is uniformly measurable \cite[Cor.\ 1.1.11]{Hytoenen2016}.
		The bound on the operator norm of $B(\xi)$ is implied by the above
		Lipschitz continuity.
		
		\item 
		We have
		$\inner[L^2(\domain)]{B(\xi)^*v}{u} = \inner[L^2(\domain)]{g(\xi)v}{u}$
		for all $u \in L^2(\domain)$.
		Since $g(\xi) \in C^{0,1}(\bar{\domain})$, 
		$g(\xi)v \in H_0^1(\domain)$
		\cite[Thm.\ 1.4.1.2]{Grisvard2011}.
		Hence, $B(\xi)^*v = \iota[g(\xi)v]$.
		The map
		$H_0^1(\domain) \ni v \mapsto g(\xi)v \in H_0^1(\domain)$
		is linear and bounded
		(see \Cref{lem:Grisvard2011}).
		Hence $B(\xi)^*$ is compact. 
		Now Schauder's theorem
		\cite[Thm.\ 3.4 on p.\ 174]{Conway1985}
		implies that $B(\xi)$ is compact. 
	\end{enumerate}
\end{proof}

The next lemma is essentially known; cf.\
\cite{Ulbrich2011,Kouri2020,Garreis2019a}.

\begin{lemma}
	\label{lem:s_unique_stable}
	Let \Cref{assumption:pde} hold.
	For each $(u,\xi) \in L^2(\domain) \times \Xi$, 
	the parameterized PDE \eqref{eq:Feb0320211603} has a unique
	solution $S(u,\xi) \in H_0^1(\domain)$ and 
	\begin{align}
	\label{eq:Feb0420210958}
	&\norm[H_0^1(\domain)]{S(u,\xi)}
	\leq (1/\kappa_{\min})\norm[H^{-1}(\domain)]{b(\xi)}
	+ ( C_\domain/\kappa_{\min})
	\norm[C^{0,1}(\bar{\domain})]{g(\xi)} \norm[L^2(\domain)]{u}.
	\end{align}
	Moreover, for each $(u_1,u_2,\xi) \in L^2(\domain)^2 \times \Xi$, 
	\begin{align}
	\label{eq:Nov2520211145}
	&\norm[H_0^1(\domain)]{S(u_2,\xi)-S(u_1,\xi)}
	\leq (C_\domain/\kappa_{\min}) \norm[C^{0,1}(\bar{\domain})]{g(\xi)}
	\norm[L^2(\domain)]{u_2-u_1}.
	\end{align}
\end{lemma}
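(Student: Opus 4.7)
The plan is to combine monotone-operator theory for well-posedness with two standard energy estimates for the stability bounds. Fix $(u,\xi) \in L^2(\domain) \times \Xi$ throughout.

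For existence and uniqueness, I would define $T : H_0^1(\domain) \to H^{-1}(\domain)$ by $T(y) = A(\xi) y + \cnop(y)$ and verify that $T$ is continuous, strongly monotone, and coercive. Continuity is immediate from the $C^2$ regularity of $\cnop$ noted just after \Cref{assumption:pde}. The pointwise inequality $(\nop(t)-\nop(s))(t-s) \geq 0$ furnished by \Cref{itm:nonlinearity}, together with the coercivity from \Cref{itm:A}, gives
\begin{align*}
\dualpHzeroone{T(y_2) - T(y_1)}{y_2-y_1} \geq \kappa_{\min} \norm[H_0^1(\domain)]{y_2-y_1}^2
\end{align*}
for all $y_1,y_2 \in H_0^1(\domain)$, which is strong monotonicity; taking $y_1 = 0$ and invoking $\nop(0) = 0$ yields coercivity. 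The Browder--Minty theorem then produces a unique $y = S(u,\xi) \in H_0^1(\domain)$ solving $T(y) = \rrhs(\xi) + B(\xi) u$.

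For \eqref{eq:Feb0420210958}, I would test the PDE against $y = S(u,\xi)$. Since $\nop$ is nondecreasing with $\nop(0) = 0$, one has $\nop(t)\, t \geq 0$ pointwise, hence $\dualpHzeroone{\cnop(y)}{y} \geq 0$; together with coercivity of $A(\xi)$ and the triangle inequality on the right-hand side this gives
\begin{align*}
\kappa_{\min} \norm[H_0^1(\domain)]{y}^2
\leq \bigl( \norm[H^{-1}(\domain)]{\rrhs(\xi)} + \norm[H^{-1}(\domain)]{B(\xi) u} \bigr) \norm[H_0^1(\domain)]{y}.
\end{align*}
Dividing by $\norm[H_0^1(\domain)]{y}$ (the estimate is trivial if $y = 0$) and invoking the operator-norm estimate from \Cref{lem:properties_B}, part~\ref{itm:Bbounded}, produces \eqref{eq:Feb0420210958}.

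For \eqref{eq:Nov2520211145}, I would subtract the PDEs for $y_i = S(u_i,\xi)$, $i = 1,2$, test the difference against $y_2 - y_1$, and drop the nonnegative nonlinear term via monotonicity of $\nop$. Coercivity of $A(\xi)$ then yields $\kappa_{\min} \norm[H_0^1(\domain)]{y_2-y_1}^2 \leq \dualpHzeroone{B(\xi)(u_2-u_1)}{y_2-y_1}$, and dividing by $\norm[H_0^1(\domain)]{y_2-y_1}$ together with the operator-norm bound on $B(\xi)$ gives \eqref{eq:Nov2520211145}. I do not expect a serious obstacle: the growth condition on $\nop''$ combined with the Sobolev embedding $H_0^1(\domain) \embedding L^p(\domain)$ is precisely what makes $\cnop$ well defined and smooth, and the monotonicity of $\nop$ together with $\nop(0) = 0$ makes every energy test nonnegative. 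The only genuinely abstract ingredient is the appeal to Browder--Minty.
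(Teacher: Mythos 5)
Your proposal is correct and follows essentially the same route as the paper: the paper also obtains well-posedness from monotonicity of $\cnop$ via monotone-operator theory (citing maximal monotonicity and the argument of Kouri et al.\ rather than invoking Browder--Minty by name), and derives both stability bounds from exactly the energy tests you describe, dropping the nonnegative nonlinear term and applying the operator-norm bound on $B(\xi)$ from \Cref{lem:properties_B}. The only difference is that you spell out the details that the paper delegates to references.
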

\begin{proof}
	The existence and uniqueness is implied by
	\cite[Thm.\ 26.A]{Zeidler1990}. We have
	$\kappa_{\min}\norm[H_0^1(\domain)]{S(u,\xi)}
	\leq \norm[H^{-1}(\domain)]{b(\xi) +  B(\xi) u}$
	(cf.\ \cite[eq.\ (3.5)]{Kouri2020})
	and
	\begin{align}
	\label{eq:eq37}
	\kappa_{\min}\norm[H_0^1(\domain)]{S(u_2,\xi)-S(u_1,\xi)}
	\leq \norm[H^{-1}(\domain)]{B(\xi)[u_2-u_1]};
	\end{align}
	cf.\ \cite[p.\ 560]{Zeidler1990} and \cite[eq.\ (3.5)]{Kouri2020}.
	Combined with \Cref{lem:properties_B}, we obtain
	\eqref{eq:Feb0420210958} and \eqref{eq:Nov2520211145}.
\end{proof}

\subsection{Derivative computation}
\label{subsect:derivative}

We show that the functions $\erpobj$ and $\hat{\erpobj}_N$ are 
\frechet\ differentiable, compute their derivatives
using the adjoint approach, and derive further stability estimates.
The derivative formulas and the stability estimates are
crucial for obtaining our finite sample size estimates.

Let us define $\rpobj : L^2(\domain) \times \Xi \to \real$
by 
\begin{align}
\label{eq:rpobj}
\rpobj(u,\xi) = (1/2)\norm[L^2(\domain)]{\iota S(u,\xi)-y_d}^2.
\end{align}
For each $(u,\xi) \in L^2(\domain) \times \Xi$, 
$z=z(u,\xi) \in H_0^1(\domain)$ solves the adjoint equation
\begin{align}
\label{eq:adjoint}
[A(\xi)  + \cnop_y(S(u,\xi))]z  = - \iota^*[\iota S(u,\xi)-y_d].
\end{align}
To derive  \eqref{eq:adjoint}, we have used the
fact that $A(\xi)$
(see \Cref{itm:A}) and $\cnop_y(y)$ are self-adjoint
for each $(y,\xi) \in H_0^1(\domain) \times \Xi$
(cf.\ \cite[eq.\ (2.8)]{Garreis2019a}).

\begin{proposition}
	\label{prop:efunsdifferentiable}
	If \Cref{assumption:pde} holds, 
	then $\erpobj$ and $\hat{\erpobj}_N$ defined in \eqref{eq:efuns} 
	are \frechet\ differentiable.
	For each $u \in L^2(\domain)$, it holds that
	$\nabla \erpobj(u)$, $\nabla \hat{\erpobj}_N(u) \in H_0^1(\domain)$,
	\begin{align}
	\label{eq:Feb0820212103}
	\nabla \erpobj(u) = 
	-\cE{g(\xi) z(u, \xi)}
	\quad \tand \quad 
	\nabla \hat{\erpobj}_N(u) = 
	-\frac{1}{N} \sum_{i=1}^N g(\xi^i) z(u, \xi^i).
	\end{align}
\end{proposition}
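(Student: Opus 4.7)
The approach is the classical reduced-gradient / adjoint argument applied pointwise in $\xi$, followed by a dominated-convergence style interchange of differentiation and expectation. I would first fix $(u,\xi) \in L^2(\domain) \times \Xi$ and show that $u \mapsto S(u,\xi)$ is \frechet\ differentiable from $L^2(\domain)$ into $H_0^1(\domain)$. Because $\nop$ is $C^2$ and nondecreasing, $\cnop$ is $C^2$ from $H_0^1(\domain)$ into $H^{-1}(\domain)$ and its derivative $\cnop_y(y)$ acts as multiplication by $\nop'(y) \geq 0$, so it is self-adjoint and positive semidefinite. Combined with \Cref{itm:A}, the linearization $A(\xi)+\cnop_y(S(u,\xi))$ is self-adjoint and $\kappa_{\min}$-coercive on $H_0^1(\domain)$, hence a homeomorphism onto $H^{-1}(\domain)$. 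The implicit function theorem applied to \eqref{eq:Feb0320211603} then yields \frechet\ differentiability of $S(\cdot,\xi)$ together with the sensitivity equation $[A(\xi)+\cnop_y(S(u,\xi))]S_u(u,\xi)\delta u = B(\xi)\delta u$.

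\textbf{Chain rule via the adjoint.} Composing with $\rpobj(\cdot,\xi) = (1/2)\norm[L^2(\domain)]{\iota S(\cdot,\xi)-y_d}^2$ gives
\[
\rpobj_u(u,\xi)\delta u = \dualpHzeroone{\iota^*[\iota S(u,\xi)-y_d]}{S_u(u,\xi)\delta u}.
\]
Inserting the adjoint equation \eqref{eq:adjoint}, using self-adjointness of $A(\xi)+\cnop_y(S(u,\xi))$, and then the sensitivity equation collapses this to $-\dualpHzeroone{B(\xi)\delta u}{z(u,\xi)} = -\inner[L^2(\domain)]{g(\xi)z(u,\xi)}{\delta u}$, where the last equality uses \Cref{itm:Badjointcompact}. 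Hence the $L^2(\domain)$-gradient of $\rpobj(\cdot,\xi)$ is $-g(\xi)z(u,\xi)$, and \Cref{lem:Grisvard2011} applied with $v=g(\xi)$ and $w=z(u,\xi)$ confirms that this gradient actually lies in $H_0^1(\domain)$. The formula for $\nabla\hat{\erpobj}_N$ is then immediate from linearity of the \frechet\ derivative.

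\textbf{Interchanging differentiation and expectation.} The substantive step is the identity $\nabla \erpobj(u) = \cE{-g(\xi)z(u,\xi)}$, which I would justify by a standard differentiation-under-the-integral argument. Fix $u$; then for each $\xi$ the remainder
\[
R(h,\xi) := \rpobj(u+h,\xi)-\rpobj(u,\xi)+\inner[L^2(\domain)]{g(\xi)z(u,\xi)}{h}
\]
is $o(\norm[L^2(\domain)]{h})$ \wpone\ by the pointwise \frechet\ differentiability just established. A uniform domination comes from chaining the PDE stability bounds: \Cref{lem:s_unique_stable} controls $\norm[H_0^1(\domain)]{S(v,\xi)}$ uniformly for $v$ in a bounded neighborhood of $u$; $\kappa_{\min}$-coercivity of $A(\xi)+\cnop_y(S(v,\xi))$ applied to \eqref{eq:adjoint} yields $\norm[H_0^1(\domain)]{z(v,\xi)} \leq (C_\domain/\kappa_{\min})\norm[L^2(\domain)]{\iota S(v,\xi)-y_d}$, again uniformly bounded; and $\norm[C^{0,1}(\bar\domain)]{g(\xi)}\leq g_{\max}$ from \Cref{itm:rrhs} together with \Cref{lem:Grisvard2011} give a deterministic bound on $\norm[L^2(\domain)]{g(\xi)z(v,\xi)}$. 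Consequently $|R(h,\xi)| \leq C\norm[L^2(\domain)]{h}$ for $h$ small with $C$ deterministic, and dominated convergence yields $\cE{R(h,\xi)} = o(\norm[L^2(\domain)]{h})$. Measurability of $\xi\mapsto g(\xi)z(u,\xi)\in H_0^1(\domain)$ follows from the \Caratheodory\ structure of the state and adjoint equations together with \Cref{itm:Bbounded}, so the expectation is a well-defined Bochner integral. The hard part is not any single estimate but keeping this measurability-and-domination bookkeeping clean; once it is in place, the adjoint-based gradient formula is forced.
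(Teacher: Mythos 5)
Your proposal is correct and follows essentially the same route as the paper: pointwise \frechet\ differentiability of $\rpobj(\cdot,\xi)$ with the adjoint representation $\nabla_u \rpobj(u,\xi)=-g(\xi)z(u,\xi)$ (the content of \Cref{lem:Feb0720211501,lem:Nov252021204}), membership of the gradient in $H_0^1(\domain)$ via \Cref{lem:Grisvard2011}, and an interchange of differentiation and expectation justified by the deterministic stability bounds. The only difference is presentational: the paper delegates the interchange step to a cited differentiation-under-the-integral lemma \cite[Lem.\ C.3]{Geiersbach2020}, whereas you spell out the underlying dominated-convergence argument on the first-order remainder, which is exactly what that lemma encapsulates.
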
%

The proof of \Cref{prop:efunsdifferentiable} uses
\Cref{lem:Feb0720211501,lem:Nov252021204}.

\begin{lemma}
	\label{lem:Feb0720211501}
	If \Cref{assumption:pde} holds, then the following statements
	hold.
	\begin{enumerate}[nosep,leftmargin=*]
		\item For each $(u,\xi) \in L^2(\domain) \times \Xi$, 
		the adjoint equation \eqref{eq:adjoint} has a unique
		solution $z(u,\xi) \in H_0^1(\domain)$ and 
		\begin{align}
		\label{eq:Feb0720211501}
		\begin{aligned}
		\norm[H_0^1(\domain)]{z(u,\xi)}
		& \leq (C_\domain/\kappa_{\min})\norm[L^2(\domain)]{\iota S(u,\xi)-y_d}.
		\end{aligned}
		\end{align}
		\item 
		\label{itm:s_caratheodory}
		The solution operator $S : L^2(\domain) \times \Xi \to H_0^1(\domain)$
		is a \Caratheodory\ mapping.
		\item 	
		\label{itm:psiystrongly}
		If $s : \Xi  \to H_0^1(\domain)$ 
		is measurable, 
		then 
		$\Xi \ni \xi \mapsto \cnop_y(s(\xi)) 
		\in \spL{H_0^1(\domain)}{H^{-1}(\domain)}$
		is uniformly measurable.
		\item 
		\label{itm:zcaratheodory}
		The adjoint state 
		$z :  L^2(\domain) \times \Xi \to H_0^1(\domain)$
		is a \Caratheodory\ function.
		\item $L^2(\domain) \times \Xi \ni 
		(u,\xi) \mapsto g(\xi) z(u,\xi) \in H_0^1(\domain)$
		is a \Caratheodory\ map.
		For each $u \in L^2(\domain)$, 
		$\Xi \ni \xi \mapsto g(\xi)z(u,\xi) \in H_0^1(\domain)$
		is Bochner integrable
		and
		\begin{align}
		\label{eq:23Nov20211913}
		\norm[H_0^1(\domain)]{g(\xi)z(u,\xi)}
		&\leq (C_D+1)\norm[C^{0,1}(\bar \domain)]{g(\xi)}
		\norm[H_0^1(\domain)]{z(u,\xi)}.
		\end{align}
	\end{enumerate}
\end{lemma}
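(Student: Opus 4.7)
The plan is to establish the five parts of \Cref{lem:Feb0720211501} in order, with (a) supplying the existence result that the subsequent parts require.

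For (a), the key observation is that \Cref{itm:nonlinearity} makes $\nop$ nondecreasing, so $\nop'(t) \geq 0$ for every $t \in \real$. This forces $\cnop_y(y)$, characterized by $\dualpHzeroone{\cnop_y(y)v}{w} = \inner[L^2(\domain)]{\nop'(y)v}{w}$, to be self-adjoint and positive semidefinite on $H_0^1(\domain)$. Consequently, $A(\xi) + \cnop_y(S(u,\xi))$ inherits from $A(\xi)$ both self-adjointness and the coercivity constant $\kappa_{\min}$, and Lax-Milgram yields the unique solvability in $H_0^1(\domain)$. To obtain \eqref{eq:Feb0720211501}, I would test \eqref{eq:adjoint} with $z(u,\xi)$, discard the nonnegative contribution of $\cnop_y(S(u,\xi))$, and bound the right-hand side using $C_\domain = \norm[\spL{L^2(\domain)}{H^{-1}(\domain)}]{\iota^*}$.

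For (b), the continuity of $S(\cdot,\xi)$ is immediate from \eqref{eq:Nov2520211145}; the main subtlety is measurability of $S(u,\cdot)$. I would view the PDE as $F(y,\xi) := A(\xi)y + \cnop(y) - \rrhs(\xi) - B(\xi)u = 0$, verify that $F$ is \Caratheodory\ (continuity in $y$ is immediate; measurability in $\xi$ uses the uniform measurability of $A$, strong measurability of $\rrhs$, and uniform measurability of $B$ from \Cref{lem:properties_B}), and then invoke a measurable-selection argument for zeros of \Caratheodory\ maps with unique solutions. For (c), \Cref{itm:nonlinearity} makes $\cnop_y : H_0^1(\domain) \to \spL{H_0^1(\domain)}{H^{-1}(\domain)}$ continuous, and since $H_0^1(\domain)$ is separable, the measurable $s$ is strongly measurable; composing the simple-function approximants of $s$ with the continuous $\cnop_y$ yields simple operator-valued approximants, which is precisely the definition of uniform measurability. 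For (d), continuity of $z(\cdot,\xi)$ follows from (b), continuity of $\cnop_y$, and the Lipschitz dependence of solutions of coercive linear equations on their coefficients and data (using the uniform lower bound $\kappa_{\min}$); measurability of $z(u,\cdot)$ combines (b), (c), and strong measurability of $\rrhs$, applied to the linear adjoint equation via the same measurable-selection approach.

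For (e), the estimate \eqref{eq:23Nov20211913} is exactly \Cref{lem:Grisvard2011} applied to $v = g(\xi)$ and $w = z(u,\xi)$. The \Caratheodory\ property combines (d), strong measurability of $g$ from \Cref{itm:rrhs}, and continuity of the multiplication $C^{0,1}(\bar\domain) \times H_0^1(\domain) \to H_0^1(\domain)$ supplied by \Cref{lem:Grisvard2011}. Bochner integrability follows because chaining \eqref{eq:23Nov20211913}, \eqref{eq:Feb0720211501}, \eqref{eq:Feb0420210958} and the uniform bounds of \Cref{itm:rrhs} produces a deterministic upper bound on $\norm[H_0^1(\domain)]{g(\xi)z(u,\xi)}$ depending only on $u$ and the problem constants. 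The hardest part will be the $\xi$-measurability claims in (b) and (d): the target space $\spL{H_0^1(\domain)}{H^{-1}(\domain)}$ is not separable in general, so one cannot lean on standard Borel-measurability-of-compositions arguments and must instead exploit uniform measurability of $A$ together with the deterministic stability estimates to propagate simple-function approximations of the coefficient data to approximations of the solutions.
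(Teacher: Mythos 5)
Your proposal is correct and follows essentially the same route as the paper: Lax--Milgram with the monotonicity of $\cnop_y$ for (a), a zero-set/measurable-selection argument for the $\xi$-measurability of $S$ and $z$ (the paper delegates this to an external reference), composition of strong measurability with continuity for (c), and \Cref{lem:Grisvard2011} plus the chain of stability estimates for (e). The only cosmetic difference is that you obtain continuity of $S(\cdot,\xi)$ from the Lipschitz estimate \eqref{eq:Nov2520211145} where the paper invokes the implicit function theorem (which it also uses to get differentiability needed elsewhere); both are valid here.
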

\begin{proof}
	\begin{enumerate}[wide,nosep]
		\item The assertions are a consequence of the Lax--Milgram lemma
		and the fact that
		$C_\domain =
		\norm[\spL{L^2(\domain)}{H^{-1}(\domain)}]{\iota^*}$.
		\item Let us define the nonlinear operator
		$E : H_0^1(\domain) \times L^2(\domain) \times \Xi  \to H^{-1}(\domain)$
		by
		$E(y,u,\xi) = A(\xi) y + \cnop(y) - \rrhs(\xi) - B(\xi)u$.
		Then $S(u,\xi)$ solves 
		\eqref{eq:Feb0320211603} if and only if $E(S(u,\xi),u,\xi) = 0$.
		As in the proofs of 
		\cite[Lems.\ 9.2 and 9.6]{Ulbrich2011}, we can show that
		$E(\cdot,\cdot,\xi)$ is twice continuously differentiable
		for each $\xi \in \Xi$ 	and that
		$\dualpHzeroone{E_y(y,u,\xi)v}{v} \geq
		\kappa_{\min}\norm[H_0^1(\domain)]{v}^2$
		for each $(y,v,u,\xi) \in H_0^1(\domain)^2 \times L^2(\domain) \times \Xi$. 
		Hence, the implicit function theorem 
		and the  Lax--Milgram lemma \cite[Lem.\ 1.8]{Hinze2009}
		ensure that $S(\cdot, \xi)$ is twice continuously
		differentiable for each $\xi \in \Xi$.
		For each $u \in L^2(\domain)$,
		the measurability of $S(u,\cdot)$ is implied by
		the proof of \cite[Thm.\ 3.12]{Garreis2019}.
		\item Since $H_0^1(\domain) \ni y \mapsto \cnop_y(y) \in 
		\spL{H_0^1(\domain)}{H^{-1}(\domain)}$ is continuous and
		$s$ is strongly measurable, 
		$\cnop_y \circ s$ is uniformly measurable
		\cite[Cor.\ 1.1.11]{Hytoenen2016}.
		\item 
		For each $y \in H_0^1(\domain)$, 
		$v \mapsto \cnop_y(y)v$ is monotone. 
		Hence, 		for each $\xi \in \Xi$, 
		$\dualpHzeroone{A(\xi)y +  \cnop_y(S(u,\xi))y}{y} \geq 
		\kappa_{\min}\norm[H_0^1(\domain)]{y}^2$
		for all $y \in H_0^1(\domain)$.
		Now, the implicit function theorem
		and part~\ref{itm:s_caratheodory} can be used to deduce
		the twice continuous differentiability  of $z(\cdot, \xi)$
		for all $\xi \in \Xi$.
		Fix $u \in L^2(\domain)$.
		\Cref{assumption:pde}, and
		parts~\ref{itm:s_caratheodory} and \ref{itm:psiystrongly} 
		ensure 
		that $\Xi \ni \xi \mapsto A(\xi) + \cnop_y(S(u,\xi))$
		is uniformly measurable.
		The measurability of $z(u,\cdot)$ is now implied by
		part~\ref{itm:s_caratheodory}
		when applied to the adjoint  equation \eqref{eq:adjoint}
		rather than the state equation \eqref{eq:Feb0320211603}.

		\item For each
		$\xi \in \Xi$, 
		part~\ref{itm:zcaratheodory}
		and \Cref{lem:Grisvard2011} ensure the continuity of
		$L^2(\domain) \ni  u \mapsto g(\xi) z(u,\xi) \in H_0^1(\domain)$.
		Now, let $u \in L^2(\domain)$.
		Let us define 
		$\varrho_1 : \Xi \to C^{0,1}(\bar \domain) \times H_0^1(\domain)$
		by $\varrho_1(\xi) = (g(\xi), z(u,\xi))$.
		Since $g$ and $z(u,\cdot)$ are strongly measurable
		(see part~\ref{itm:zcaratheodory}),
		$\varrho_1$ is strongly measurable.
		We also define $\varrho_2 : C^{0,1}(\bar \domain) \times H_0^1(\domain)
		\to H_0^1(\domain)$
		by $\varrho_2(g,z) = g z$.
		\Cref{lem:Grisvard2011} ensures that
		 $\varrho_2$ is bounded.
		Combined with the fact that $\varrho_2$  is bilinear, 
		we find that  $\varrho_2$ is continuous.
		We have $g(\xi) z(u,\xi) = (\varrho_2 \circ \varrho_1)(\xi)$
		for all $\xi \in \Xi$.
		Thus, the chain rule
		\cite[Cor.\ 1.1.11]{Hytoenen2016} ensures
		the  measurability
		of $\Xi \ni \xi \mapsto g(\xi)z(u,\xi) \in H_0^1(\domain)$.
		
		\Cref{lem:Grisvard2011} yields \eqref{eq:23Nov20211913}. 
		Using \eqref{eq:Feb0420210958}, 
		\eqref{eq:Feb0720211501}, \eqref{eq:23Nov20211913}, 
		\Cref{itm:rrhs}, 	and the  measurability
		of $\Xi \ni \xi \mapsto g(\xi)z(u,\xi) \in H_0^1(\domain)$, 
		we have
		$\cE{\norm[H_0^1(\domain)]{g(\xi)z(u,\xi)}} < \infty$.
		Hence, $\Xi \ni \xi \mapsto g(\xi)z(u,\xi) \in H_0^1(\domain)$
		is Bochner integrable \cite[Prop.\ 1.2.2]{Hytoenen2016}.
	\end{enumerate}
\end{proof}

\Cref{lem:Nov252021204} establishes bounds on the gradient of $\rpobj$.
\begin{lemma}
	\label{lem:Nov252021204}
	If \Cref{assumption:pde} holds, then
	$\rpobj(\cdot,\xi)$ is twice continuously differentiable 
	for all $\xi \in \Xi$.
	For all 	$(u,\xi) \in L^2(\domain) \times \Xi$,
	$\nabla_u \rpobj(u,\xi) = -g(\xi)z(u,\xi)$ and
	\begin{align}
		\label{eq:Dec2820211103'}
	\norm[H_0^1(\domain)]{\nabla_u \rpobj(u,\xi)}
	& \leq 
	(C_D+1)(C_\domain/\kappa_{\min})
	\norm[C^{0,1}(\bar \domain)]{g(\xi)}
	\big( 
	(C_\domain/\kappa_{\min})\norm[H^{-1}(\domain)]{b(\xi)}
	+ 
	\\
	\nonumber
	&\quad \quad 
	( C_\domain^2/\kappa_{\min})
	\norm[C^{0,1}(\bar{\domain})]{g(\xi)} \norm[L^2(\domain)]{u}
	+
	\norm[L^2(\domain)]{y_d}\big),
	\\
	\label{eq:Nov252021210}
	\norm[L^2(\domain)]{\nabla_u \rpobj(u,\xi)}	
	& \leq 
	(C_D^2/\kappa_{\min})\norm[C^{0,1}(\bar \domain)]{g(\xi)}
	\big(C_D \norm[H_0^1(\domain)]{S(u,\xi)} + \norm[L^2(\domain)]{y_d}\big).
	\end{align}
\end{lemma}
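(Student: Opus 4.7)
The plan is to handle the three claims in sequence: smoothness of $\rpobj(\cdot,\xi)$, the adjoint representation of its gradient, and finally the two norm bounds. First I would note that, by the implicit function theorem argument already invoked in the proof of \Cref{lem:Feb0720211501}\,\ref{itm:s_caratheodory}, the operator $S(\cdot,\xi)$ is twice continuously differentiable as a map from $L^2(\domain)$ into $H_0^1(\domain)$ for each $\xi \in \Xi$. Composing with the quadratic map $y \mapsto (1/2)\norm[L^2(\domain)]{\iota y - y_d}^2$, which is smooth on $H_0^1(\domain)$, and using the chain rule gives the asserted twice continuous differentiability of $\rpobj(\cdot,\xi)$.

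Next I would derive the gradient via the standard adjoint approach. Differentiating $\rpobj(u,\xi)$ in direction $h \in L^2(\domain)$ yields
\begin{align*}
\rpobj'(u,\xi) h = \inner[L^2(\domain)]{\iota S(u,\xi)-y_d}{\iota S_u(u,\xi) h}.
\end{align*}
Linearization of the state equation \eqref{eq:Feb0320211603} shows that $S_u(u,\xi) h \in H_0^1(\domain)$ solves $[A(\xi)+\cnop_y(S(u,\xi))] S_u(u,\xi) h = B(\xi) h$. Testing the adjoint equation \eqref{eq:adjoint} with $S_u(u,\xi) h$, using self-adjointness of $A(\xi)+\cnop_y(S(u,\xi))$, and applying \Cref{lem:properties_B}\,\ref{itm:Badjointcompact} gives
\begin{align*}
\rpobj'(u,\xi) h = -\dualpHzeroone{B(\xi) h}{z(u,\xi)} = -\inner[L^2(\domain)]{g(\xi) z(u,\xi)}{h},
\end{align*}
so that $\nabla_u \rpobj(u,\xi) = -g(\xi) z(u,\xi)$, which by \Cref{lem:Grisvard2011} indeed lies in $H_0^1(\domain)$.

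For the $H_0^1(\domain)$ estimate \eqref{eq:Dec2820211103'}, I would first apply \eqref{eq:23Nov20211913} from \Cref{lem:Feb0720211501} to bound $\norm[H_0^1(\domain)]{g(\xi) z(u,\xi)}$ by $(C_\domain+1) \norm[C^{0,1}(\bar \domain)]{g(\xi)} \norm[H_0^1(\domain)]{z(u,\xi)}$, then insert the adjoint stability bound \eqref{eq:Feb0720211501}, use the triangle inequality on $\norm[L^2(\domain)]{\iota S(u,\xi)-y_d}$, and finally substitute the state bound \eqref{eq:Feb0420210958} from \Cref{lem:s_unique_stable}. For the $L^2(\domain)$ estimate \eqref{eq:Nov252021210}, I would instead use $\norm[L^2(\domain)]{g(\xi) z(u,\xi)} \leq \norm[C^{0,1}(\bar \domain)]{g(\xi)} \norm[L^2(\domain)]{z(u,\xi)} \leq C_\domain \norm[C^{0,1}(\bar \domain)]{g(\xi)} \norm[H_0^1(\domain)]{z(u,\xi)}$, apply \eqref{eq:Feb0720211501}, then bound $\norm[L^2(\domain)]{\iota S(u,\xi)-y_d} \leq C_\domain \norm[H_0^1(\domain)]{S(u,\xi)} + \norm[L^2(\domain)]{y_d}$ via the embedding constant and the triangle inequality.

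No single step is a genuine obstacle; the work is almost entirely bookkeeping to match the exact constants $(C_\domain+1)$, $C_\domain/\kappa_{\min}$, and $C_\domain^2/\kappa_{\min}$ appearing in the statement. The most error-prone point will be the $H_0^1$ estimate, where one must carefully track that \Cref{lem:Grisvard2011} produces the factor $(C_\domain+1)$ in front of $\norm[C^{0,1}(\bar \domain)]{g(\xi)}$ while the $L^2$ estimate only uses $\norm[L^\infty(\domain)]{g(\xi)} \leq \norm[C^{0,1}(\bar \domain)]{g(\xi)}$ and hence does not incur that extra factor; keeping these two chains of inequalities separate is the only mild subtlety.
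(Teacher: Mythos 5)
Your proposal is correct and follows essentially the same route as the paper: differentiability via the implicit function theorem and the adjoint representation of the gradient (the paper simply cites the analogous derivation in Ulbrich, Lem.\ 9.8, where you spell it out), followed by chaining \eqref{eq:23Nov20211913}, \eqref{eq:Feb0720211501}, the triangle inequality, and \eqref{eq:Feb0420210958} for the $H_0^1(\domain)$ bound, and \friedrichs' inequality together with \eqref{eq:Feb0720211501} for the $L^2(\domain)$ bound. The constants come out exactly as in \eqref{eq:Dec2820211103'} and \eqref{eq:Nov252021210}.
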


\begin{proof}
	Adapting the proof of 	\cite[Lem.\ 9.8]{Ulbrich2011}, 
	we obtain that $\rpobj(\cdot, \xi)$
	is twice continuously differentiable on $L^2(\domain)$ with 
	$\nabla_u \rpobj(u,\xi) = - g(\xi) z(u,\xi)$
	for each $(u,\xi) \in L^2(\domain)$.
	Combined with 	\eqref{eq:Feb0720211501} 
	and \eqref{eq:23Nov20211913}, we obtain	
	\begin{align*}
			\norm[H_0^1(\domain)]{\nabla_u \rpobj(u,\xi)}
			\leq  
			(C_D+1)(C_\domain/\kappa_{\min})
			\norm[C^{0,1}(\bar \domain)]{g(\xi)}
			\norm[L^2(\domain)]{\iota S(u,\xi)-y_d}.
	\end{align*}
	Together with \eqref{eq:Feb0420210958}, 
	we obtain \eqref{eq:Dec2820211103'}. 
	Using \eqref{eq:Feb0720211501} and \friedrichs' inequality,
	we have
	\begin{align*}
	\norm[L^2(\domain)]{\nabla_u \rpobj(u,\xi)}
	\leq \norm[L^2(\domain)]{g(\xi) z(u,\xi)}
	\leq \tfrac{C_\domain^2}{\kappa_{\min}}
	\norm[C^{0,1}(\bar \domain)]{g(\xi)}
	\norm[L^2(\domain)]{\iota S(u,\xi)-y_d}.
	\end{align*}
	Hence, we obtain \eqref{eq:Nov252021210}.
\end{proof}

\begin{proof}[{Proof of \Cref{prop:efunsdifferentiable}}]
	Combining the stability estimates
	\eqref{eq:Feb0420210958}
	and  \eqref{eq:Dec2820211103'},
	and 
	\friedrichs' inequality,
	we can apply \cite[Lem.\ C.3]{Geiersbach2020} to deduce 
	the \frechet\ differentiability of 
	$\erpobj$ and of $\hat{\erpobj}_N$
	and the formulas in \eqref{eq:Feb0820212103}. 
	Owing to \eqref{eq:Feb0820212103} and
	\Cref{lem:Feb0720211501}, we find
	that $\nabla \erpobj(u)$, $\nabla \hat{\erpobj}_N(u) \in H_0^1(\domain)$. 
\end{proof}

\subsection{Lipschitz constant computation}
\label{subsec:lipschitz}
We compute a deterministic Lipschitz constant of the gradient
$\nabla_u \rpobj(\cdot,\xi)$ on $\adcsp$ for $\xi \in \Xi$.
\begin{proposition}
	\label{prop:efunsdifferentiable'}
	If \Cref{assumption:pde,itm:adcspbounded} hold, 
	then for each $\xi \in \Xi$, 
	the map
	$\nabla_u \rpobj(\cdot,\xi)$
	is Lipschitz continuous
	on $\adcsp$ with Lipschitz constant $L_{\nabla \rpobj}$ given by
	\begin{align}
	\nonumber 
	L_{\nabla \rpobj}
	& = 
	C_\domain g_{\max}
	\bigg(
	(C_\domain^3/\kappa_{\min}^2)
	g_{\max}
	\\
	\nonumber 
	&\quad +
	\bigg[	(C_\domain/\kappa_{\min}^2)c_p^3
	g_{\max}
	\Big(
	c_{\nop} |\domain|^{(p-3)/p} 
	\\
		\label{eq:nablarpobjLipschitz}
	&\quad\quad +
	d_{\nop} c_p^{p-3}
	\big(
	(1/\kappa_{\min})b_{\max} 
	+
	3(C_\domain/\kappa_{\min})g_{\max}
	\rad
	\big)^{p-3}
	\Big)
	\\
	\nonumber 
	&\quad\quad\quad\cdot 
	\Big(
	(C_\domain^2/\kappa_{\min}^2)b_{\max} 
	+ (C_\domain^3/\kappa_{\min}^2)g_{\max}
	\rad
	+ (C_\domain/\kappa_{\min})\norm[L^2(\domain)]{y_d}
	\Big)
	\bigg]
	\bigg).
	\end{align}
\end{proposition}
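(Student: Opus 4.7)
The plan is to reduce everything to a Lipschitz bound on the adjoint state. By \Cref{lem:Nov252021204}, $\nabla_u \rpobj(u,\xi) = -g(\xi)z(u,\xi)$, and \friedrichs' inequality gives
\begin{align*}
\|\nabla_u \rpobj(u_2,\xi) - \nabla_u \rpobj(u_1,\xi)\|_{L^2(\domain)}
\le C_\domain g_{\max}\|z(u_2,\xi) - z(u_1,\xi)\|_{H_0^1(\domain)},
\end{align*}
which accounts for the outer factor $C_\domain g_{\max}$ in \eqref{eq:nablarpobjLipschitz}; the remaining task is to bound $\|z(u_2,\xi)-z(u_1,\xi)\|_{H_0^1(\domain)}$ linearly in $\|u_2-u_1\|_{L^2(\domain)}$.

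Set $S_i = S(u_i,\xi)$ and $z_i = z(u_i,\xi)$. Subtracting the adjoint equations \eqref{eq:adjoint} for $i=1,2$ yields
\begin{align*}
[A(\xi)+\cnop_y(S_2)](z_2-z_1) = -[\cnop_y(S_2)-\cnop_y(S_1)]z_1 - \iota^*\iota(S_2-S_1).
\end{align*}
Since $\nop$ is nondecreasing, $\cnop_y(S_2)$ is monotone, so \Cref{itm:A} yields $\kappa_{\min}$-coercivity of the left-hand operator. Testing with $z_2-z_1$ and using $\|\iota^*\|_{\spL{L^2(\domain)}{H^{-1}(\domain)}} = C_\domain$ produces
\begin{align*}
\kappa_{\min}\|z_2-z_1\|_{H_0^1(\domain)}
\le \|[\cnop_y(S_2)-\cnop_y(S_1)]z_1\|_{H^{-1}(\domain)} + C_\domain^2\|S_2-S_1\|_{H_0^1(\domain)}.
\end{align*}
Combined with \eqref{eq:Nov2520211145}, the tracking-term contribution is exactly the summand $(C_\domain^3/\kappa_{\min}^2)g_{\max}$ inside the parenthesis of \eqref{eq:nablarpobjLipschitz}.

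The main obstacle is the superposition difference, which I would handle by
\begin{align*}
\nop'(S_2)-\nop'(S_1) = (S_2-S_1)\int_0^1 \nop''(S_1+s(S_2-S_1))\,\du s,
\end{align*}
the growth bound $|\nop''(t)| \le c_{\nop}+d_{\nop}|t|^{p-3}$ from \Cref{itm:nonlinearity}, and the pointwise inequality $|S_1+s(S_2-S_1)| \le \max(|S_1|,|S_2|)$. For unit $v\in H_0^1(\domain)$, a four-factor \hoelder\ estimate with exponents $p,p,p,p/(p-3)$ bounds the pairing $\int_\domain[\nop'(S_2)-\nop'(S_1)]z_1 v\,\du x$ by
\begin{align*}
\bigl(c_{\nop}|\domain|^{(p-3)/p}+d_{\nop}\|\max(|S_1|,|S_2|)\|_{L^p(\domain)}^{p-3}\bigr)\|S_2-S_1\|_{L^p(\domain)}\|z_1\|_{L^p(\domain)}\|v\|_{L^p(\domain)}.
\end{align*}
Every $L^p$-norm is then converted to an $H_0^1$-norm via the embedding constant $c_p$, producing the factor $c_p^3$ and the extra $c_p^{p-3}$ in the nonlinear part of \eqref{eq:nablarpobjLipschitz}; the restriction $p\le 6$ when $d=3$ in \Cref{itm:nonlinearity} is exactly what guarantees $H_0^1(\domain)\embedding L^p(\domain)$.

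To conclude, I substitute explicit data. The sharpening trick is the pointwise bound $\max(|S_1|,|S_2|)\le |S_1|+|S_2-S_1|$, which combined with \eqref{eq:Feb0420210958} (using $\|u_1\|_{L^2(\domain)}\le\rad$) and \eqref{eq:Nov2520211145} (using $\|u_2-u_1\|_{L^2(\domain)}\le 2\rad$) yields $\|\max(|S_1|,|S_2|)\|_{L^p(\domain)} \le c_p((1/\kappa_{\min})b_{\max}+3(C_\domain/\kappa_{\min})g_{\max}\rad)$; this is the base of the $(\cdot)^{p-3}$ power in \eqref{eq:nablarpobjLipschitz} and explains the factor $3$. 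Bounding $\|z_1\|_{H_0^1(\domain)}$ through \eqref{eq:Feb0720211501} together with $\|\iota S_1-y_d\|_{L^2(\domain)}\le C_\domain\|S_1\|_{H_0^1(\domain)}+\|y_d\|_{L^2(\domain)}$ and another application of \eqref{eq:Feb0420210958} produces the trinomial $(C_\domain^2/\kappa_{\min}^2)b_{\max}+(C_\domain^3/\kappa_{\min}^2)g_{\max}\rad+(C_\domain/\kappa_{\min})\|y_d\|_{L^2(\domain)}$. Substituting \eqref{eq:Nov2520211145} for $\|S_2-S_1\|_{H_0^1(\domain)}$ (the single factor kept proportional to $\|u_2-u_1\|_{L^2(\domain)}$), dividing by $\kappa_{\min}$ from the coercivity, and multiplying by the outer $C_\domain g_{\max}$ recovers \eqref{eq:nablarpobjLipschitz}. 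The delicate bookkeeping lies in the \hoelder\ step: the $p-3$ power from $|t|^{p-3}$ must be distributed correctly between the $c_p^{p-3}$ factor and the data bounds on $S_i$, since the allocation determines the final exponents of $c_p$, $g_{\max}$, and $b_{\max}$.
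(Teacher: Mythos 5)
Your proposal is correct and follows essentially the same route as the paper: reduce to a Lipschitz bound on the adjoint state via the subtracted adjoint equation and coercivity, control the superposition difference $\cnop_y(S_2)-\cnop_y(S_1)$ by the growth bound on $\nop''$ and \hoelder's inequality (the paper outsources exactly this step to the cited \Cref{lem:lem41}, which you re-derive inline), and then substitute the stability estimates \eqref{eq:Feb0420210958}, \eqref{eq:Nov2520211145}, \eqref{eq:Feb0720211501} together with $\norm[L^2(\domain)]{u_i}\leq\rad$. All constants, including the factor $3$ and the exponent allocation between $c_p^{p-3}$ and the data bounds, come out as in \eqref{eq:nablarpobjLipschitz}.
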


\Cref{prop:efunsdifferentiable'} is established 
using \Cref{lem:lem41}.
\begin{lemma}[{see \cite[Lem.\ 5.2]{Garreis2019} and \cite[Lem.\ 4.1]{Garries2019c}}]
	\label{lem:lem41}
	If \Cref{itm:domain,itm:nonlinearity} hold, 
	then for all $y_1$, $y_2 \in H_0^1(\domain)$,
	\begin{align*}
	\begin{aligned}
	&\norm[\spL{H_0^1(\domain)}{H^{-1}(\domain)}]
	{\cnop_y(y_2)-\cnop_y(y_1)}
	\\ 
	&	\leq c_p^3\Big(
	c_{\nop} |\domain|^{(p-3)/p} +
	d_{\nop} c_p^{p-3}
	\big(
	\norm[H_0^1(\domain)]{y_2} + \norm[H_0^1(\domain)]{y_2-y_1}
	\big)^{p-3}
	\Big)
	\norm[H_0^1(\domain)]{y_2-y_1}.
	\end{aligned}
	\end{align*}
\end{lemma}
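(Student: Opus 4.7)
The plan is to reduce the operator-norm estimate to a pointwise bound on $q'(y_2(x)) - q'(y_1(x))$ via the fundamental theorem of calculus, then dualize and apply \hoelder's inequality together with the continuous embedding $H_0^1(\domain) \embedding L^p(\domain)$ with constant $c_p$, which is guaranteed by \Cref{itm:domain,itm:nonlinearity}. Writing
\[
q'(y_2(x)) - q'(y_1(x)) = \int_0^1 q''\bigl(y_1(x) + t(y_2(x) - y_1(x))\bigr)\, (y_2(x) - y_1(x))\, dt
\]
and combining the growth bound $|q''(s)| \le c_\nop + d_\nop |s|^{p-3}$ from \Cref{itm:nonlinearity} with the elementary estimate $|y_1(x) + t(y_2(x)-y_1(x))| \le |y_2(x)| + |y_2(x)-y_1(x)|$ produces the pointwise bound $|q'(y_2(x)) - q'(y_1(x))| \le (c_\nop + d_\nop (|y_2(x)| + |y_2(x)-y_1(x)|)^{p-3})\, |y_2(x)-y_1(x)|$.

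Since $\cnop_y(y)$ acts as multiplication by $q'(y)$, meaning $\dualpHzeroone{\cnop_y(y)v}{w} = \inner[L^2(\domain)]{q'(y)v}{w}$ for $v, w \in H_0^1(\domain)$, the target operator norm equals
\[
\norm[\spL{H_0^1(\domain)}{H^{-1}(\domain)}]{\cnop_y(y_2) - \cnop_y(y_1)}
= \sup\, \Bigl| \int_\domain (q'(y_2)-q'(y_1))\, v\, w \, dx \Bigr|,
\]
with the supremum over $v, w \in H_0^1(\domain)$ of unit $H_0^1$-norm. Inserting the two summands of the pointwise bound splits the integral into a $c_\nop$-part and a $d_\nop$-part. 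For the $c_\nop$-part, \hoelder's inequality with three $L^3(\domain)$-factors combined with the inclusion $L^p(\domain) \embedding L^3(\domain)$ (valid since $\domain$ is bounded and $p>3$, contributing $|\domain|^{1/3-1/p}$ per factor, hence $|\domain|^{(p-3)/p}$ in total) and $H_0^1(\domain) \embedding L^p(\domain)$ yields the first summand $c_\nop c_p^3 |\domain|^{(p-3)/p} \norm[H_0^1(\domain)]{y_2-y_1}$. For the $d_\nop$-part, I would apply \hoelder's inequality with four exponents $p/(p-3), p, p, p$ (whose reciprocals sum to one), placing $(|y_2|+|y_2-y_1|)^{p-3} \in L^{p/(p-3)}(\domain)$ and $|y_2-y_1|, v, w \in L^p(\domain)$; the triangle inequality inside $L^p(\domain)$ bounds $\|\,|y_2|+|y_2-y_1|\,\|_{L^p(\domain)}^{p-3}$ by $(\norm[L^p(\domain)]{y_2}+\norm[L^p(\domain)]{y_2-y_1})^{p-3}$, and the embedding $H_0^1(\domain) \embedding L^p(\domain)$ converts the four $L^p$-norms into $c_p$-scaled $H_0^1$-norms, yielding the second summand $d_\nop c_p^3 c_p^{p-3} (\norm[H_0^1(\domain)]{y_2}+\norm[H_0^1(\domain)]{y_2-y_1})^{p-3} \norm[H_0^1(\domain)]{y_2-y_1}$.

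The main obstacle is choosing the correct \hoelder\ exponents in the $d_\nop$-part: the $(p-3)$-power growth of $q''$ at infinity must be absorbed without leaving the range of admissible embeddings, and the exponents $p/(p-3), p, p, p$ work precisely because the dimension restriction in \Cref{itm:nonlinearity} (i.e., $p \in (3,\infty)$ if $d=2$ and $p \in (3,6]$ if $d=3$) ensures $H_0^1(\domain) \embedding L^p(\domain)$ continuously. Once the exponents are fixed, the remainder is a routine combination of \hoelder's inequality, the triangle inequality, and the single embedding constant $c_p$.
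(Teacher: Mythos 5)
Your proof is correct: the fundamental-theorem-of-calculus representation of $q'(y_2)-q'(y_1)$, the pointwise bound $|y_1+t(y_2-y_1)|\le|y_2|+|y_2-y_1|$, the three-factor $L^3$ H\"older estimate with $L^p\embedding L^3$ contributing $|\domain|^{(p-3)/p}$, and the four-exponent H\"older split $p/(p-3),p,p,p$ together with $H_0^1(\domain)\embedding L^p(\domain)$ reproduce the stated constants exactly. The paper does not prove this lemma itself but cites \cite[Lem.\ 4.1]{Garries2019c}; your argument is the standard one behind that reference, so there is nothing further to compare.
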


\begin{proof}[{Proof of \Cref{prop:efunsdifferentiable'}}]
	Fix $(u_1,u_2,\xi) \in L^2(\domain)^2 \times \Xi$.
	First, we show that  
	\begin{align}
	\label{eq:Nov2520211221}
	\begin{aligned}
	& \norm[H_0^1(\domain)]{z(u_2,\xi)-z(u_1,\xi)}
	\leq 
	(C_\domain^3/\kappa_{\min}^2)
	\norm[C^{0,1}(\bar{\domain})]{g(\xi)}
	\norm[L^2(\domain)]{u_2-u_1} 
	\\
	&\quad +
	\bigg[(C_\domain/\kappa_{\min}^2)c_p^3
	\norm[C^{0,1}(\bar{\domain})]{g(\xi)}
	\Big(
	c_{\nop} |\domain|^{(p-3)/p} 
	\\
	&\quad\quad +
	d_{\nop} c_p^{p-3}
	\big(
	\norm[H_0^1(\domain)]{S(u_2,\xi)} + 
	\norm[H_0^1(\domain)]{S(u_2,\xi)-S(u_1,\xi)}
	\big)^{p-3}
	\Big)
	\\
	&\quad\quad\quad\cdot 
	\norm[H_0^1(\domain)]{z(u_1,\xi)}
	\bigg] \norm[L^2(\domain)]{u_2-u_1}.
	\end{aligned}
	\end{align}
	Using  arguments similar to those 
	used to derive the stability estimate in the proof of
	\cite[Prop.\ 4.4]{Kouri2020}, the definition
	of $\rpobj$ (see \eqref{eq:rpobj}), and the triangle inequality, we have
	\begin{align}
	\nonumber 
	\kappa_{\min}\norm[H_0^1(\domain)]{z(u_2,\xi)-z(u_1,\xi)}
	&\leq 
	\norm[H^{-1}(\domain)]{\iota^*\iota[S(u_2,\xi)-S(u_1,\xi)]}
	\\
	\label{eq:adjointlipschitz}
	&\quad +
	\norm[H^{-1}(\domain)]{[\cnop_y(S(u_2,\xi))
		-\cnop_y(S(u_1,\xi))]^*z(u_1,\xi)}.
	\end{align}
	Using \eqref{eq:Nov2520211145}, we have
	\begin{align}
	\label{eq:Dec1520211631}
	\begin{aligned}
	\norm[H^{-1}(\domain)]{\iota^*\iota[S(u_2,\xi)-S(u_1,\xi)]}
	&\leq C_\domain^2
	\norm[H_0^1(\domain)]{S(u_2,\xi)-S(u_1,\xi)}
	\\
	&\leq (C_\domain^3/\kappa_{\min})
	\norm[C^{0,1}(\bar{\domain})]{g(\xi)}
	\norm[L^2(\domain)]{u_2-u_1}.
	\end{aligned}
	\end{align}
	To derive a bound on the second term in 
	\eqref{eq:adjointlipschitz}, we apply \Cref{lem:lem41}
	with $y_2 = S(u_2,\xi)$ and $y_1 = S(u_1,\xi)$.
	Using the resulting estimate and \eqref{eq:Nov2520211145},
	and dividing \eqref{eq:adjointlipschitz} and
	\eqref{eq:Dec1520211631} by $\kappa_{\min}$, 
	 we obtain \eqref{eq:Nov2520211221}.

	Next, we show that $L_{\nabla \rpobj}$ defined in 
	\eqref{eq:nablarpobjLipschitz} is a Lipschitz constant
	of $\nabla_u \rpobj(\cdot,\xi)$ on $\adcsp$.
	\Cref{lem:Nov252021204} ensures
	\begin{align*}
		\nabla_u \rpobj(u_2,\xi)-\nabla_u\rpobj(u_1,\xi)
		= -g(\xi)\big(z(u_2,\xi)-z(u_1,\xi)\big).
	\end{align*}
	Combining \Cref{lem:Nov252021204,lem:s_unique_stable},
	\Cref{itm:adcspbounded},
	\friedrichs' inequality, and \eqref{eq:Nov2520211221},
	 we obtain the Lipschitz bound.
\end{proof}

\subsection{Existence of solutions}
\label{subsect:existence}
\Cref{lem:existencesolutions} establishes the existence of solutions
to the stochastic program \eqref{eq:ocp} 
and to the SAA problem
\eqref{eq:saa}.
\begin{proposition}
	\label{lem:existencesolutions}
	If \Cref{itm:psi,assumption:pde} hold, then
	 the stochastic program \eqref{eq:ocp} has a solution
	and for each $N \in \naturals$, the SAA problem
	\eqref{eq:saa} has a solution.
\end{proposition}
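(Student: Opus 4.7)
The plan is to decompose $\obj(u) = \erpobj(u) + (\alpha/2)\norm[L^2(\domain)]{u}^2 + \psi(u)$ and $\hat{\obj}_N(u) = \hat{\erpobj}_N(u) + (\alpha/2)\norm[L^2(\domain)]{u}^2 + \psi(u)$, and to verify weak lower semicontinuity term by term. The quadratic term is convex and continuous, hence weakly lower semicontinuous, and $\psi$ is weakly lower semicontinuous by \Cref{itm:psi}, since a proper, convex, lower semicontinuous function on a Banach space is automatically weakly lower semicontinuous. It therefore remains to analyze $\erpobj$ and $\hat{\erpobj}_N$.

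The key step is to show that $L^2(\domain) \ni u \mapsto \rpobj(u,\xi) = (1/2)\norm[L^2(\domain)]{\iota S(u,\xi)-y_d}^2$ is sequentially weakly continuous for every $\xi \in \Xi$. Let $u_n \rightharpoonup u$ in $L^2(\domain)$. By \Cref{lem:properties_B}, $B(\xi)$ is a compact operator, so $B(\xi)[u_n-u] \to 0$ strongly in $H^{-1}(\domain)$. Combining this with the inequality \eqref{eq:eq37} derived in the proof of \Cref{lem:s_unique_stable} gives $S(u_n,\xi) \to S(u,\xi)$ strongly in $H_0^1(\domain)$, and hence $\iota S(u_n,\xi) \to \iota S(u,\xi)$ in $L^2(\domain)$, so $\rpobj(u_n,\xi) \to \rpobj(u,\xi)$. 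For the SAA objective, the finite sum structure of $\hat{\erpobj}_N$ immediately yields its weak continuity, and hence weak lower semicontinuity of $\hat{\obj}_N$. For the stochastic program, the integrands $\rpobj(u_n,\xi)$ are nonnegative and converge pointwise in $\xi$, so Fatou's lemma gives $\erpobj(u) \leq \liminf_n \erpobj(u_n)$, which establishes weak lower semicontinuity of $\obj$. This proves part~(a).

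For part~(b), I apply the direct method of the calculus of variations. Since $\psi$ is proper, there exists some $u_0 \in \adcsp$, and the stability estimate \eqref{eq:Feb0420210958} together with \Cref{itm:rrhs,itm:adcspbounded} ensures $\erpobj(u_0), \hat{\erpobj}_N(u_0) < \infty$, so neither objective is identically $+\infty$. Any minimizing sequence $(u_n)$ for $\obj$ must eventually satisfy $\psi(u_n) < \infty$, hence $u_n \in \adcsp$ and $\norm[L^2(\domain)]{u_n} \leq \rad$ by \Cref{itm:adcspbounded}. By reflexivity of $L^2(\domain)$, a subsequence satisfies $u_{n_k} \rightharpoonup u^\star$, and the weak lower semicontinuity from (a) yields $\obj(u^\star) \leq \liminf_k \obj(u_{n_k}) = \inf \obj$, so $u^\star$ is a solution; the same reasoning applies to $\hat{\obj}_N$. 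The principal technical obstacle is the weak-to-strong continuity of $S(\cdot,\xi)$, which crucially relies on the compactness of $B(\xi)$ from \Cref{lem:properties_B}; without this compactness, the nonlinearity $\cnop$ would prevent passing the weak limit through $\rpobj$.
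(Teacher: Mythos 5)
Your part (a) is correct and follows essentially the same route as the paper: decompose the objective, use the compactness of $B(\xi)$ from \Cref{lem:properties_B} together with \eqref{eq:eq37} to get weak-to-strong continuity of $S(\cdot,\xi)$, and handle the convex terms by convexity plus lower semicontinuity. The only difference is that you pass from pointwise convergence of $\rpobj(u_n,\xi)$ to the expectation via Fatou's lemma (yielding only weak lower semicontinuity of $\erpobj$, which suffices), whereas the paper uses the stability estimate \eqref{eq:Feb0420210958} and dominated convergence to get full weak continuity of $\erpobj$. Both are fine.

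Part (b) has a genuine gap. You bound the minimizing sequence by arguing that it eventually lies in $\adcsp$ and then invoking \Cref{itm:adcspbounded} to conclude $\norm[L^2(\domain)]{u_n}\leq\rad$. But \Cref{itm:adcspbounded} is \emph{not} among the hypotheses of the proposition: only \Cref{itm:psi} and \Cref{assumption:pde} are assumed, so $\adcsp$ need not be bounded (take $\psi\equiv 0$, for which $\adcsp=L^2(\domain)$), and the step that extracts a weakly convergent subsequence has no justification as written. The paper closes this by a coercivity argument instead: since $\psi$ is proper, convex, and lower semicontinuous, it is minorized by a continuous affine function, and combined with $\alpha>0$ and $\rpobj\geq 0$ this makes $\obj$ and $\hat{\obj}_N$ coercive, so every minimizing sequence is bounded without any boundedness assumption on the feasible set. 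Replacing your appeal to \Cref{itm:adcspbounded} by this coercivity estimate repairs the proof; the remainder of your direct-method argument is then correct.
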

\begin{proof}
	Using \Cref{lem:properties_B}, \eqref{eq:Feb0420210958}, \eqref{eq:eq37}, 
	\eqref{eq:rpobj}, and \cite[Lem.\ 2.33]{Bonnans2013}, the assertions can be
	verified using  standard arguments. We omit the details.
\end{proof}

\section{Compact subset of feasible set}
\label{sec:compatset}
We construct a deterministic, compact set containing the
SAA critical points using first-order necessary optimality conditions
and the stability estimates established in \cref{sec:rnpdeopt}.
The compact subset is used to derive finite sample size estimates
in \cref{sec:samplesizeestimates}. While 
the computations performed in this section mainly
use the stability estimate \eqref{eq:Dec2820211103'},
the compact set's construction  provides an integral step towards 
establishing our  nonasymptotic sample size estimates
via the covering number approach. In particular, we derive
an explicit bound on the compact set's diameter.

We define the set of SAA critical points $\hat{\mathscr{C}}_N$ by
\begin{align*}
\hat{\mathscr{C}}_N
&= \{\, u_N \in \adcsp \colon \, 
u_N = \prox{\psi/\alpha}{-(1/\alpha)\nabla \hat{\erpobj}_N(u_N)}
\,\}.
\end{align*}
Let \Cref{ass:adcspbounded,assumption:pde} hold true.
Let  $u_N^* \in \adcsp$ be a local solution to
the SAA problem \eqref{eq:saa}.
Since  $\alpha > 0$ and  $\hat{\erpobj}_N$ is
\frechet\ differentiable according to \Cref{prop:efunsdifferentiable}, 
we have $u_N^* \in \hat{\mathscr{C}}_N$ (cf.\ \cite[p.\ 2092]{Mannel2020} and
\cite[Thm.\ 1.46]{Hinze2009}).
The set $\hat{\mathscr{C}}_N$ can be viewed as a set-valued mapping
from $\Omega$ to $L^2(\domain)$.
We also define the set $\hat{\mathscr{D}}_N$  by
\begin{align}
\label{eq:DN}
\hat{\mathscr{D}}_N
&=  \{\, v_N \in L^2(\domain) \colon \, 
v_N = -(1/\alpha)\nabla \hat{\erpobj}_N(\prox{\psi/\alpha}{v_N}) \,\}.
\end{align}
As with $\hat{\mathscr{C}}_N$, 
	the set $\hat{\mathscr{D}}_N$ can be viewed as a set-valued mapping from 
	$\Omega$ to $L^2(\domain)$.
We have the relationships (cf.\ \cite[p.\ 2092]{Mannel2020})
\begin{align}
\label{eq:normalcritical}
\hat{\mathscr{C}}_N =
\{\, \prox{\psi/\alpha}{v_N} 
\colon v_N \in \hat{\mathscr{D}}_N \, \}
= \prox{\psi/\alpha}{\hat{\mathscr{D}}_N}. 
\end{align}
We define the problem-dependent parameters
\begin{align}
\label{eq:parameters}
\begin{aligned}
\mathfrak{D}^{\mathscr{D}}
& = 
(C_\domain/\kappa_{\min}) \rrhs_{\max}
+ (C_\domain^2/\kappa_{\min}) g_{\max}  \rad
+ \norm[L^2(\domain)]{y_d},
\\
\radius^{\mathscr{D}}
& = 
(C_\domain+1)^2
(C_\domain/\kappa_{\min})g_{\max}
\mathfrak{D}^{\mathscr{D}}.
\end{aligned}
\end{align}
\Cref{prop:cinwadcsp} demonstrates that 
$\hat{\mathscr{D}}_N$ is contained in the set
\begin{align}
\label{eq:Feb0420211009}
\begin{aligned}
\vadcsp^{\mathscr{D}} = 
\{\,
& u \in H^1(\domain)  :\, 
\norm[H^1(\domain)]{u}
\leq 
(1/\alpha)\radius^{\mathscr{D}}
\,\}.
\end{aligned}
\end{align}
The set  $\vadcsp^{\mathscr{D}}$ is a compact subset of $L^2(\domain)$, 
as $\alpha > 0$, $\radius^{\mathscr{D}}$ is finite
and the embedding operator of the embedding 
$H^1(\domain) \embedding L^2(\domain)$ is compact.
\begin{proposition}
	\label{prop:cinwadcsp}
	If \Cref{ass:adcspbounded,assumption:pde} hold,
	then for each $N \in \naturals$,  it holds that $\hat{\mathscr{D}}_N\subset
	\vadcsp^{\mathscr{D}}$,
	where $\hat{\mathscr{D}}_N$ and $\vadcsp^{\mathscr{D}}$
	are defined in \eqref{eq:DN} and \eqref{eq:Feb0420211009}, respectively.
\end{proposition}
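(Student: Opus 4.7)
The plan is to take an arbitrary $v_N \in \hat{\mathscr{D}}_N$, rewrite it via the gradient formula from \Cref{prop:efunsdifferentiable}, and then chain the deterministic stability estimates of \cref{subsect:semilinearpdes,subsect:derivative} to bound $\|v_N\|_{H^1(\domain)}$ by $(1/\alpha)\radius^{\mathscr{D}}$. Each intermediate step is a direct application of a prior lemma, so the only real work is careful bookkeeping of constants so as to land exactly on the expression \eqref{eq:parameters}.

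First I would let $v_N \in \hat{\mathscr{D}}_N$ and set $u_N = \prox{\psi/\alpha}{v_N}$. Since $\psi$ is proper, convex and lower semicontinuous, $u_N$ lies in the domain of $\psi$, hence in $\adcsp$; in particular $\|u_N\|_{L^2(\domain)} \leq \rad$ by \Cref{itm:adcspbounded}. The defining relation of $\hat{\mathscr{D}}_N$ combined with the gradient formula \eqref{eq:Feb0820212103} then gives
\begin{align*}
v_N = -\frac{1}{\alpha}\nabla \hat{\erpobj}_N(u_N) = \frac{1}{\alpha N}\sum_{i=1}^N g(\xi^i)z(u_N,\xi^i),
\end{align*}
which already lies in $H_0^1(\domain)$ by \Cref{lem:Feb0720211501}.

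Next I would chain the three a priori bounds along the pipeline $u_N \mapsto S(u_N,\xi^i) \mapsto z(u_N,\xi^i) \mapsto g(\xi^i) z(u_N,\xi^i)$. The state bound \eqref{eq:Feb0420210958}, together with $\|u_N\|_{L^2(\domain)} \leq \rad$ and \Cref{itm:rrhs}, yields $\|S(u_N,\xi^i)\|_{H_0^1(\domain)} \leq (1/\kappa_{\min})\rrhs_{\max} + (C_\domain/\kappa_{\min})g_{\max}\rad$. Using $\|\iota\| = C_\domain$ and the triangle inequality gives $\|\iota S(u_N,\xi^i) - y_d\|_{L^2(\domain)} \leq \mathfrak{D}^{\mathscr{D}}$. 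Then \eqref{eq:Feb0720211501} produces $\|z(u_N,\xi^i)\|_{H_0^1(\domain)} \leq (C_\domain/\kappa_{\min})\mathfrak{D}^{\mathscr{D}}$, and finally \eqref{eq:23Nov20211913} plus \Cref{itm:rrhs} gives
\begin{align*}
\|g(\xi^i) z(u_N,\xi^i)\|_{H_0^1(\domain)} \leq (C_\domain+1)g_{\max}(C_\domain/\kappa_{\min})\mathfrak{D}^{\mathscr{D}}.
\end{align*}
Averaging over $i = 1,\ldots,N$ via the triangle inequality then preserves this bound for $\alpha\|v_N\|_{H_0^1(\domain)}$.

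The last step is to promote the $H_0^1(\domain)$-bound to an $H^1(\domain)$-bound. Since $v_N \in H_0^1(\domain)$, \friedrichs' inequality gives $\|v_N\|_{L^2(\domain)} \leq C_\domain\|v_N\|_{H_0^1(\domain)}$, whence $\|v_N\|_{H^1(\domain)} \leq \sqrt{1 + C_\domain^2}\|v_N\|_{H_0^1(\domain)} \leq (C_\domain+1)\|v_N\|_{H_0^1(\domain)}$. Multiplying the previous display by this extra factor $(C_\domain+1)$ produces exactly the constant $\radius^{\mathscr{D}} = (C_\domain+1)^2(C_\domain/\kappa_{\min})g_{\max}\mathfrak{D}^{\mathscr{D}}$ from \eqref{eq:parameters}, so $\|v_N\|_{H^1(\domain)} \leq (1/\alpha)\radius^{\mathscr{D}}$ and $v_N \in \vadcsp^{\mathscr{D}}$. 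The main (but minor) hazard is arithmetic: one must be careful to pick up the factor $(C_\domain+1)$ twice—once from the pointwise $H_0^1$-bound on $g(\xi^i) z(u_N,\xi^i)$ via \Cref{lem:Grisvard2011}, and once from the $H_0^1 \to H^1$ conversion—so that the final constant matches \eqref{eq:parameters} exactly.
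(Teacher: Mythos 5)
Your proposal is correct and follows essentially the same route as the paper: the paper fixes $u\in\adcsp$, invokes the ready-made gradient bound \eqref{eq:Dec2820211103'} together with $\norm[H^1(\domain)]{y}\leq(C_\domain+1)\norm[H_0^1(\domain)]{y}$ to get $\norm[H^1(\domain)]{\nabla_u\rpobj(u,\xi)}\leq\radius^{\mathscr{D}}$, and then applies this with $u=u_N=\prox{\psi/\alpha}{v_N}$. You merely re-derive \eqref{eq:Dec2820211103'} by chaining \eqref{eq:Feb0420210958}, \eqref{eq:Feb0720211501}, and \eqref{eq:23Nov20211913} instead of citing it, and your constant bookkeeping lands exactly on $\radius^{\mathscr{D}}$ as required.
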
%
\begin{proof}
	Fix $(u,\xi) \in \adcsp \times \Xi$.
	Using \eqref{eq:Dec2820211103'}, 
	$\norm[H^1(\domain)]{y} \leq 
	(C_\domain+1)\norm[H_0^1(\domain)]{y}$
	being valid for all $y \in H_0^1(\domain)$, and
	the definition of $\radius^{\mathscr{D}}$, we find that
	\begin{align}
	\label{eq:Dec1020211843}
	\norm[H^1(\domain)]{\nabla_u \rpobj(u,\xi)}
	&\leq \radius^{\mathscr{D}}.
	\end{align}	
	Fix $N \in \naturals$ and
	$v_N \in \hat{\mathscr{D}}_N$. 
	We have
	$v_N = -(1/\alpha)\nabla \hat{\erpobj}_N(\prox{\psi/\alpha}{v_N})$.
	Defining $u_N = \prox{\psi/\alpha}{v_N}$,
	we have $u_N \in \adcsp$. Choosing $u = u_N$
	in \eqref{eq:Dec1020211843}, we obtain
	$v_N \in \vadcsp^{\mathscr{D}}$.
\end{proof}

\Cref{prop:cinwadcsp}
and \eqref{eq:normalcritical} imply 
that for each $N \in \naturals$, 
the set $\hat{\mathscr{C}}_N$ is contained in 
$\prox{\psi/\alpha}{\vadcsp^{\mathscr{D}}}$.
This set is compact, as it is the image
of the compact set $\vadcsp^{\mathscr{D}}$ under 
$\prox{\psi/\alpha}{}$.

\section{Quantitative Sobolev embeddings}
\label{sec:covering}

The Sobolev 
embedding $H^1(\domain) \embedding L^2(\domain)$ is compact, provided
that $\domain \subset \real^d$ is a bounded Lipschitz domain. 
The authors of \cite{Birman1967,Birman1980}
establish covering numbers of closed 
unit balls in Sobolev spaces with respect
to Lebesgue norms.
\Cref{thm:birman1967}, which is an excerpt of
\cite[Thm.\ 1.7]{Birman1980}, is  the key result for establishing
the nonasymptotic sample size estimates 
in \cref{sec:samplesizeestimates}.

\begin{theorem}[{see \cite[Thm.\ 1.7]{Birman1980}}]
	\label{thm:birman1967}
	Let $s > 0$.
	The binary logarithm of the $\nu$-covering number
	of the closed $H^s((0,1)^d)$-unit ball
	with respect to the $L^2((0,1)^d)$-norm
	is proportional to  $(1/\nu)^{d/s}$
	for all (sufficiently small) $\nu > 0$.
\end{theorem}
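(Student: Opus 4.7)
The plan is to prove matching upper and lower bounds on $\log_2 \mathcal{N}(\nu; B, \|\cdot\|_{L^2((0,1)^d)})$, where $B$ denotes the closed unit ball of $H^s((0,1)^d)$. The upper bound would come from piecewise polynomial approximation on a partition of the cube together with quantization of the resulting finite-dimensional coefficients; the lower bound would come from packing $B$ with disjointly supported rescaled bump functions. I would treat the two bounds separately and only worry about sharp constants at the end.

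For the upper bound, partition $(0,1)^d$ into $M = h^{-d}$ axis-aligned cubes of side $h$. The Bramble--Hilbert lemma (for integer $s$) or its fractional Deny--Lions analogue yields, on each cube $Q$, a polynomial $p_Q$ of degree less than $\lceil s \rceil$ with $\|f - p_Q\|_{L^2(Q)} \lesssim h^s \|f\|_{H^s(Q)}$; summing over $Q$ and using $\sum_Q \|f\|_{H^s(Q)}^2 \leq 1$ gives a global $L^2$ approximation error of order $h^s$. The approximation space $V_h$ of piecewise polynomials has dimension proportional to $h^{-d}$, so choosing $h \sim \nu^{1/s}$ and covering the image of $B$ in $V_h$ at scale $\nu$ by a Euclidean net gives a first bound $\log_2 \mathcal{N}(\nu) \lesssim \nu^{-d/s} \log(1/\nu)$. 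To remove the logarithmic factor and reach the sharp exponent $d/s$, I would pass to a multi-scale construction: write $f = \sum_{j \geq 0} (P_{j+1} - P_j) f$, where $P_j$ is the $L^2$-projection onto piecewise polynomials on cubes of side $2^{-j}$; the Sobolev regularity forces $\|(P_{j+1} - P_j) f\|_{L^2} \lesssim 2^{-js}$, so the scale-$j$ increment can be quantized at resolution $2^{-js} \nu$ with only $O(2^{jd})$ degrees of freedom, and the per-scale bit counts sum geometrically to $C \nu^{-d/s}$.

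For the lower bound, fix $h \sim \nu^{1/s}$, set $M = h^{-d}$, and pick a smooth bump $\varphi$ supported in the unit cube with $\|\varphi\|_{L^2} = 1$. On each cube $Q$ of a uniform side-$h$ partition let $\varphi_Q(x) = h^{s - d/2} \varphi((x - x_Q)/h)$; standard scaling gives $\|\varphi_Q\|_{H^s} \lesssim 1$ and $\|\varphi_Q\|_{L^2} \sim h^s \sim \nu$, and the supports are pairwise disjoint. For $\epsilon \in \{-1,+1\}^M$ the combination $f_\epsilon = \eta M^{-1/2} \sum_Q \epsilon_Q \varphi_Q$ then lies in $B$ for a small absolute constant $\eta$, and disjointness of supports gives $\|f_\epsilon - f_{\epsilon'}\|_{L^2}^2 \sim \eta^2 \nu^2 \, d_H(\epsilon, \epsilon')/M$, where $d_H$ is Hamming distance. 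A Varshamov--Gilbert style argument supplies $\exp(c M) = \exp(c \nu^{-d/s})$ sign patterns pairwise at Hamming distance at least $M/4$, and hence $L^2$-separated by a constant multiple of $\nu$; this forces $\log_2 \mathcal{N}(\nu; B, \|\cdot\|_{L^2}) \gtrsim \nu^{-d/s}$.

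The main obstacle is the sharpness of the upper bound: single-scale polynomial approximation loses a spurious $\log(1/\nu)$ factor, and recovering the correct exponent $d/s$ requires genuinely exploiting the multi-scale structure of $H^s$, essentially through spline or wavelet bases adapted to the smoothness index. A secondary technical point is the correct formulation of the polynomial approximation inequality when $s$ is not an integer, for which the fractional Deny--Lions argument applied to the $H^s$-seminorm is needed; the integer case reduces to a direct application of Bramble--Hilbert.
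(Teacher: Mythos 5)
The paper does not prove this statement at all: it is quoted verbatim from Birman--Solomyak (the cited Theorem~1.7 of \cite{Birman1980}), and the only commentary the authors add is that the upper bound in the original source is stated for the $H^s$-unit \emph{sphere} rather than the closed unit ball. Your proposal therefore goes well beyond what the paper does, and its strategy is sound and essentially the classical one --- indeed, your upper-bound mechanism (piecewise-polynomial approximation on a dyadic hierarchy of cubes, with the multiscale quantization needed to kill the spurious $\log(1/\nu)$ factor of the single-scale argument) is in spirit exactly the method Birman and Solomyak themselves use, while your lower bound via disjointly supported rescaled bumps and a Varshamov--Gilbert packing is the standard complementary argument and correctly produces the matching exponent $\nu^{-d/s}$. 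Two points would need care in a full write-up: first, for non-integer $s$ the bound $\|\sum_Q \epsilon_Q \varphi_Q\|_{H^s}^2 \lesssim \sum_Q \|\varphi_Q\|_{H^s}^2$ is not immediate from disjointness of supports, since the Gagliardo seminorm has cross terms; these are controllable if you support the bumps in concentric subcubes so that distinct bumps are separated by a distance comparable to $h$, but the estimate has to be done. Second, in the multiscale upper bound you must choose the per-scale quantization resolutions $\epsilon_j$ so that both $\sum_j \epsilon_j \lesssim \nu$ and $\sum_j 2^{jd}\log(2^{-js}/\epsilon_j) \lesssim \nu^{-d/s}$ hold simultaneously (e.g.\ $\epsilon_j \sim \nu\, 2^{-(J-j)\delta}$ with $2^{-Js}\sim\nu$), and you should truncate the sum at scale $J$ using $\sum_{j>J}2^{-js}\lesssim \nu$; your sketch gestures at this but the geometric bookkeeping is where the sharpness actually lives.
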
%

Note that in \cite[p.\ 2]{Birman1980} the definition
$W^{k,p}([0,1)^d) = W^{k,p}((0,1)^d)$ is made.
\Cref{thm:birman1967} implies the existence of a constant $\varrho > 0$
such that for all $\nu > 0$,
\begin{align}
\label{eq:eq5118}
\mathcal{N}(\nu; B_{H^1((0,1)^d)}, \norm[L^2((0,1)^d)]{\cdot})
\leq 2^{\varrho(1/\nu)^d},
\end{align}
where $B_{H^1((0,1)^d)}$ is the closed 
$H^1((0,1)^d)$-unit ball.
The upper bound in \Cref{thm:birman1967} is established in 
\cite[Thm.\ 5.2]{Birman1967}  for the  $H^s((0,1)^d)$-unit sphere
rather than for the closed $H^s((0,1)^d)$-unit ball.
We refer the reader to  \cite[sect.\ 1.3.12]{Novak1988}
and \cite[pp.\ 118 and 151]{Edmunds1996}
for related quantitative Sobolev embedding statements.

\section{Nonasymptotic sample size estimates}
\label{sec:samplesizeestimates}
We establish sample size estimates for SAA critical points.
We define the normal map $\phi : L^2(\domain) \to L^2(\domain)$ by
\begin{align}
\label{eq:phi}
\phi(v) = \nabla \erpobj(\prox{\psi/\alpha}{v}) + \alpha v.
\end{align}
If $v^* \in L^2(\domain)$ satisfies $\phi(v^*) = 0$, 
then $u^* = \prox{\psi/\alpha}{v^*}$
is a critical point of the stochastic program \eqref{eq:ocp}
\cite[p.\ 2092]{Mannel2020}.

\begin{theorem}
	\label{thm:samplesize}
	Let \Cref{ass:adcspbounded,assumption:pde} hold and let $\domain = (0,1)^d$.
	Let $\varepsilon > 0$.
	If $\bar v_N$ is a measurable selection of 
	$\hat{\mathscr{D}}_N$
	defined in \eqref{eq:DN} and
	\begin{align}
	\label{eq:expectationboundN}
	N \geq 
	\frac{12\ln(2)\tau_{\mathscr{D}}^2}{\varepsilon^2}
	\bigg[\varrho
	\Big(\frac{4\max\{L_{\nabla \rpobj},1\}\radius^{\mathscr{D}}}
	{\alpha\varepsilon}\Big)^{d} + 1\bigg],
	\end{align}
	then
	\begin{align}
	\label{eq:expectationbound}
	\cE{\norm[L^2(\domain)]{\phi(\bar{v}_N)}}
	\leq \varepsilon,
	\end{align}
	where
	$\mathfrak{D}^{\mathscr{D}}$ and $\radius^{\mathscr{D}}$ are
	defined in \eqref{eq:parameters},
	$\varrho > 0$ is specified in \eqref{eq:eq5118},
	the Lipschitz constant $L_{\nabla \rpobj}$ is given in 
	\eqref{eq:nablarpobjLipschitz}, and
	\begin{align*}
	\tau_{\mathscr{D}} & = 2 
	(C_\domain^2/\kappa_{\min})g_{\max}
	\mathfrak{D}^{\mathscr{D}}.
	\end{align*}
	If  $\delta \in (0,1)$ and
	\begin{align}
	\label{eq:tailboundN}
	N \geq 
	\frac{48\tau_{\mathscr{D}}^2}{\varepsilon^2}
	\bigg[\ln(2)\varrho\Big(\frac{4L_{\nabla \rpobj}\radius^{\mathscr{D}}}
	{\alpha\varepsilon}\Big)^{d}
	+ \ln\Big(\frac{2}{\delta}\Big)\bigg],
	\end{align}
	then 
	\begin{align}
	\label{eq:tailbound}
	\Prob{\hat{\mathscr{D}}_N 
		\subset \mathscr{D}^\varepsilon}
	\geq 1-\delta,
	\end{align}
	where
	$\hat{\mathscr{D}}_N $ is defined in \eqref{eq:DN},
	$\phi$  in \eqref{eq:phi}, and
	$\mathscr{D}^\varepsilon
	=  \{\, v \in L^2(\domain) \colon \, 
	\norm[L^2(\domain)]{\phi(v)} \leq \varepsilon \,\}
	$.
\end{theorem}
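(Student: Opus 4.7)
The plan is to exploit that any $\bar v_N \in \hat{\mathscr{D}}_N$ makes the \emph{empirical} normal map vanish identically, so that $\phi(\bar v_N)$ reduces to a pointwise deviation of $\nabla \hat{\erpobj}_N$ from $\nabla \erpobj$ evaluated at a random point living in the compact set produced in \cref{sec:compatset}. Concretely, introducing $\hat\phi_N(v) = \nabla \hat{\erpobj}_N(\prox{\psi/\alpha}{v}) + \alpha v$, the defining relation \eqref{eq:DN} gives $\hat\phi_N(\bar v_N)=0$, hence
\begin{equation*}
\phi(\bar v_N) \;=\; \bigl(\nabla \erpobj - \nabla \hat{\erpobj}_N\bigr)\!\bigl(\prox{\psi/\alpha}{\bar v_N}\bigr).
\end{equation*}
By \Cref{prop:cinwadcsp}, $\bar v_N \in \vadcsp^{\mathscr{D}}$, so $\prox{\psi/\alpha}{\bar v_N}$ lies in the compact set $K := \prox{\psi/\alpha}{\vadcsp^{\mathscr{D}}}$. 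Both \eqref{eq:expectationbound} and the event in \eqref{eq:tailbound} are therefore controlled by the uniform deviation
$\Delta_N := \sup_{u \in K}\norm[L^2(\domain)]{\nabla \erpobj(u) - \nabla \hat{\erpobj}_N(u)}$.

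Next I would discretize via covering numbers. Since $\vadcsp^{\mathscr{D}}$ lies in the closed $H^1((0,1)^d)$-ball of radius $(1/\alpha)\radius^{\mathscr{D}}$, a rescaling together with \Cref{thm:birman1967} and \eqref{eq:eq5118} yields
\begin{equation*}
\mathcal{N}\bigl(\nu;\vadcsp^{\mathscr{D}},\norm[L^2(\domain)]{\cdot}\bigr) \;\le\; 2^{\varrho\bigl(\radius^{\mathscr{D}}/(\alpha\nu)\bigr)^d}
\end{equation*}
for every $\nu>0$; as $\prox{\psi/\alpha}{}$ is $1$-Lipschitz, the same bound transfers to $K$. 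Let $\{u^1,\dots,u^M\}$ be an $L^2(\domain)$-$\nu$-net of $K$, with $\nu$ to be chosen proportional to $\varepsilon/L_{\nabla \rpobj}$. For any $u \in K$, picking the closest net point $u^k$ and using that both $\nabla\erpobj$ and $\nabla\hat{\erpobj}_N$ inherit the $L^2(\domain)$-Lipschitz constant $L_{\nabla\rpobj}$ from \Cref{prop:efunsdifferentiable'} (by passing the pointwise estimate through the expectation, respectively the sample average), I obtain
\begin{equation*}
\Delta_N \;\le\; \max_{1\le k\le M}\norm[L^2(\domain)]{\bigl(\nabla\erpobj-\nabla\hat{\erpobj}_N\bigr)(u^k)} \;+\; 2L_{\nabla\rpobj}\,\nu.
\end{equation*}
The Lipschitz-extension term contributes $O(\varepsilon)$ by the choice of $\nu$.

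The concentration step handles the finite maximum on the net. At each $u^k$, the summands $g(\xi^i)z(u^k,\xi^i)$ are i.i.d.\ $H_0^1(\domain)$-valued random elements whose $L^2(\domain)$-norms are bounded by $\tau_{\mathscr{D}}/2$: this follows by chaining \eqref{eq:Nov252021210}, the stability estimate \eqref{eq:Feb0420210958}, and \Cref{itm:adcspbounded}, which together produce exactly the constant $(C_\domain^2/\kappa_{\min})g_{\max}\mathfrak{D}^{\mathscr{D}}$ that appears in $\tau_{\mathscr{D}}$. Centered summands are thus $L^2(\domain)$-bounded by $\tau_{\mathscr{D}}$, so the Hilbert-space sub-Gaussian expectation and tail bounds of \cref{sec:subgaussianbounds}, packaged into uniform normal-map statements in \cref{sect:uniformtailbounds}, give $\cE{\max_k \norm[L^2(\domain)]{(\nabla\erpobj-\nabla\hat{\erpobj}_N)(u^k)}} \lesssim \tau_{\mathscr{D}}\sqrt{(\ln M + 1)/N}$ and $\Prob{\max_k \norm[L^2(\domain)]{(\nabla\erpobj-\nabla\hat{\erpobj}_N)(u^k)} > t} \lesssim \exp(\ln M - Nt^2/\tau_{\mathscr{D}}^2)$. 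Substituting $\ln M \lesssim \ln(2)\varrho(\radius^{\mathscr{D}}/(\alpha\nu))^d$ with $\nu \asymp \varepsilon/L_{\nabla\rpobj}$ and solving for $N$ yields the explicit thresholds \eqref{eq:expectationboundN} and \eqref{eq:tailboundN}, with the $+1$ and $+\ln(2/\delta)$ terms coming from the non-log-$M$ contributions of the sub-Gaussian bounds; the factor $\max\{L_{\nabla\rpobj},1\}$ in \eqref{eq:expectationboundN} accommodates the degenerate case $L_{\nabla\rpobj}<1$ in the choice of $\nu$.

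The principal technical obstacle is the measurability bookkeeping: the minimizer selection $\bar v_N$ must be a genuine $L^2(\domain)$-valued random variable, and $\{\hat{\mathscr{D}}_N \subset \mathscr{D}^\varepsilon\}$ must be an event in $\cF$. This is exactly the role of \cref{sec:setinclusion}, which lets me promote the sample-pathwise covering argument to probabilistic statements. The concentration step is also nonstandard because Hoeffding-type inequalities for Hilbert-space-valued averages (rather than scalars) are needed; these are supplied by \cref{sec:subgaussianbounds}. Once these two pieces are in place, the argument above is essentially bookkeeping.
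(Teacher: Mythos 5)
Your proposal is correct and follows essentially the same route as the paper: reduce $\phi(\bar v_N)$ to a uniform deviation over the compact set from \Cref{prop:cinwadcsp}, cover it using \Cref{thm:birman1967}, transfer Lipschitz continuity of $\nabla_u\rpobj(\cdot,\xi)$ through the nonexpansive proximity operator, and apply the Hilbert-space sub-Gaussian bounds of \cref{sec:subgaussianbounds}, with the same identification $\tau_{\mathscr{D}}/2$ from \eqref{eq:Nov252021210} and \eqref{eq:Feb0420210958}. The only cosmetic difference is that you inline the covering-plus-concentration argument (taking the supremum over $K=\prox{\psi/\alpha}{\vadcsp^{\mathscr{D}}}$), whereas the paper packages it as \Cref{prop:uniformboundsoperator} and \Cref{rem:basicerrorestimate5} applied with the supremum over $\vadcsp^{\mathscr{D}}$.
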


Before establishing \Cref{thm:samplesize}, we briefly comment on
the sample size estimates \eqref{eq:expectationboundN}
and \eqref{eq:tailboundN}, and address measurability
issues in \Cref{lem:missues}.
The first addends  in the 
sample size estimates \eqref{eq:expectationboundN}
and \eqref{eq:tailboundN} are very similar.
Our proofs  of \Cref{thm:samplesize,prop:uniformboundsoperator} show that 
the Lipschitz constant $L_{\nabla \rpobj}$
(see \Cref{prop:efunsdifferentiable'}) in \eqref{eq:expectationboundN} 
may be replaced by the expected value of an integrable
Lipschitz constant of $\nabla_u \rpobj(\cdot,\xi)$ on $\adcsp$.
While this approach may result in a potentially 
less conservative sample size estimate
than \eqref{eq:expectationboundN}, we choose to use 
$L_{\nabla \rpobj}$ in \eqref{eq:expectationboundN}.

\begin{lemma}
	\label{lem:missues}
	If \Cref{ass:adcspbounded,assumption:pde} hold, then
	the set-valued mapping
	$\hat{\mathscr{D}}_N$ is measurable
	with closed, nonempty images and 
	$\{\, \omega \in \Omega \colon \, 
	\hat{\mathscr{D}}_N(\omega) \subset \mathscr{D}^\varepsilon\,
	\}$
	is measurable.
\end{lemma}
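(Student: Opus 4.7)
The plan is to verify the three assertions---nonemptiness, closedness, and measurability of the images, together with measurability of the set-inclusion event---separately, leaning on the Carathéodory-type structure of the normal-map fixed-point equation defining $\hat{\mathscr{D}}_N$.

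First I would dispose of nonemptiness. By \Cref{lem:existencesolutions}, for each $\omega \in \Omega$ the SAA problem \eqref{eq:saa} admits a global minimizer $u_N^*(\omega) \in \adcsp$. The first-order optimality condition observed just after the definition of $\hat{\mathscr{C}}_N$ gives $u_N^* = \wprox[]{\psi/\alpha}{-(1/\alpha)\nabla \hat{\erpobj}_N(u_N^*)}$, so $v_N^* := -(1/\alpha)\nabla \hat{\erpobj}_N(u_N^*)$ satisfies the defining relation in \eqref{eq:DN}; hence $v_N^*(\omega) \in \hat{\mathscr{D}}_N(\omega)$.

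For closedness and measurability I would work with the residual map
\begin{equation*}
\Psi(v, \omega) \;=\; v + (1/\alpha)\nabla \hat{\erpobj}_N\!\bigl(\wprox[]{\psi/\alpha}{v}\bigr)(\omega),
\end{equation*}
so that $\hat{\mathscr{D}}_N(\omega) = \{v \in L^2(\domain)\colon \Psi(v,\omega) = 0\}$. The proximity operator of a proper, convex, lower semicontinuous function is nonexpansive (hence continuous) on $L^2(\domain)$, and \Cref{prop:efunsdifferentiable} together with the continuity of $\nabla_u\rpobj(\cdot,\xi)$ derived in \cref{subsect:derivative} shows that $\nabla\hat{\erpobj}_N(\cdot)(\omega)$ is continuous on $L^2(\domain)$. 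Thus $\Psi(\cdot,\omega)$ is continuous and its zero set is closed. For fixed $v \in L^2(\domain)$, the formula \eqref{eq:Feb0820212103} writes $\nabla \hat{\erpobj}_N(u)$ as a finite sum of terms $g(\xi^i)z(u,\xi^i)$; by \Cref{lem:Feb0720211501}, the map $(u,\xi) \mapsto g(\xi)z(u,\xi)$ is Carathéodory, and since the $\xi^i$ are measurable, $\omega \mapsto \Psi(v,\omega)$ is measurable. Hence $\Psi$ is a Carathéodory map from the separable Hilbert space $L^2(\domain)$ to itself, and the set-valued mapping $\omega \mapsto \{v\colon \Psi(v,\omega) = 0\}$ is measurable with closed, nonempty images---this is precisely what the abstract result recorded in \cref{sec:setinclusion} is designed to deliver.

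Finally, for the event $\{\omega\colon \hat{\mathscr{D}}_N(\omega) \subset \mathscr{D}^\varepsilon\}$, I would note that $\phi$ defined in \eqref{eq:phi} is continuous on $L^2(\domain)$ by the same arguments applied to $\nabla \erpobj$ (which the dominated convergence argument in the proof of \Cref{lem:existencesolutions} already uses), so $\mathscr{D}^\varepsilon$ is closed and its complement $V = L^2(\domain)\setminus \mathscr{D}^\varepsilon$ is open. The measurability of $\hat{\mathscr{D}}_N$ then yields $\hat{\mathscr{D}}_N^{-1}(V) \in \cF$, and the desired event is the complement of $\hat{\mathscr{D}}_N^{-1}(V)$, hence measurable. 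The main obstacle I anticipate is checking that $\Psi$ is genuinely Carathéodory on all of $L^2(\domain)$ (not just on $\adcsp$), since the Lipschitz estimate in \Cref{prop:efunsdifferentiable'} was only stated on $\adcsp$; but continuity of $\nabla_u \rpobj(\cdot,\xi)$ everywhere follows from its twice continuous differentiability established in \Cref{lem:Nov252021204}, so this is a matter of invoking the right existing piece rather than a new estimate.
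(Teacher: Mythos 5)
Your proof is correct and follows essentially the same route as the paper: nonemptiness via \Cref{lem:existencesolutions} together with the first-order conditions, closedness and measurability of $\hat{\mathscr{D}}_N$ as the zero set of a \Caratheodory\ residual map (for which the paper invokes \cite[Thm.\ 8.2.9]{Aubin2009}), closedness of $\mathscr{D}^\varepsilon$ from continuity of $\phi$, and measurability of the inclusion event via the preimage of the open complement, which is exactly the content of \Cref{lem:inclusionmeasurable}. The only slip is attributional: the measurability of the zero-set mapping $\omega \mapsto \{\, v \colon \Psi(v,\omega)=0 \,\}$ is \emph{not} what the result in \cref{sec:setinclusion} delivers---that lemma only handles the set-inclusion event, which you then re-prove by hand anyway---so at that point you need a measurable-selection theorem for \Caratheodory\ maps such as \cite[Thm.\ 8.2.9]{Aubin2009}.
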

\begin{proof}
	Since $\nabla \hat{\erpobj}_N$ is a \Caratheodory\ function
	(see \Cref{lem:Feb0720211501,lem:Nov252021204}),
	$\adcsp$ is nonempty and closed, 
	and $\prox{\psi/\alpha}{}$ is firmly nonexpansive
	\cite[Prop.\ 12.28]{Bauschke2011},
	the set-valued mapping $\hat{\mathscr{D}}_N$ is
	closed-valued and measurable
	\cite[Thm.\ 8.2.9]{Aubin2009}.
	\Cref{prop:efunsdifferentiable} ensures
	that $\nabla_u \rpobj(\cdot, \xi)$ is Lipschitz continuous
	with a Lipschitz constant
	independent of $\xi \in \Xi$. 
	Hence, $\nabla \erpobj$ is (Lipschitz) continuous, 
	yielding the closeness of
	$\mathscr{D}^\varepsilon$.
	\Cref{lem:existencesolutions} implies that 
	$\mathscr{D}^\varepsilon$ and
	$\hat{\mathscr{D}}_N $ are nonempty.
	Now, \Cref{lem:inclusionmeasurable}
	ensures the measurability of the event
	$\{\, \hat{\mathscr{D}}_N \subset \mathscr{D}^\varepsilon \, \}$.
\end{proof}

\begin{proof}[{Proof of \Cref{thm:samplesize}}]
	\Cref{lem:missues} ensures that 
	$\{\, \hat{\mathscr{D}}_N \subset \mathscr{D}^\varepsilon\,
	\}$
	is measurable.
	To establish the expectation bound \eqref{eq:expectationbound}
	and the tail bound \eqref{eq:tailbound}, we apply
	\Cref{prop:uniformboundsoperator}
	with $\vadcsp = \vadcsp^{\mathscr{D}}$, 
	$\hsp = L^2(\domain)$,
	and $G = \nabla_u \rpobj$,
	where $\vadcsp^{\mathscr{D}}$ is defined in \eqref{eq:Feb0420211009}.	
	We verify the hypotheses of 
	\Cref{prop:uniformboundsoperator}.
	\Cref{assumption:basicerrorestimate1} is implied by
	\Cref{lem:Nov252021204,lem:Feb0720211501}.
	Using \eqref{eq:nablarpobjLipschitz}
	and \Cref{prop:efunsdifferentiable}, we find that
	$\nabla_u \rpobj(\cdot,\xi)$ is Lipschitz continuous
	with Lipschitz constant $L_{\nabla \rpobj}$ for all $\xi \in \Xi$. 
	Hence, 
	\Cref{assumption:basicerrorestimate2,assumption:basicerrorestimate6} hold
	true. 
	We verify \Cref{assumption:basicerrorestimate3}. 	
	By construction, the set $\vadcsp^{\mathscr{D}}$  
	is an  $H^1(\domain)$-ball about zero with radius
	$(1/\alpha)\radius^{\mathscr{D}}$.
	Therefore, \Cref{thm:birman1967} yields
	\begin{align}
	\label{eq:coveringvadcsp}
	\mathcal{N}(\nu; \vadcsp^{\mathscr{D}}, \norm[L^2(\domain)]{\cdot})
	\leq 2^{\varrho(\radius^{\mathscr{D}}/(\alpha\nu))^d}
	\quad \text{for all} \quad \nu > 0,
	\end{align}
	where $\varrho > 0$ is specified in \eqref{eq:eq5118}. 
	Hence, \Cref{assumption:basicerrorestimate3} holds true.
	We verify \Cref{assumption:basicerrorestimate4}.
	Fix $(u,\xi) \in \adcsp \times \Xi$.
	Using \eqref{eq:Feb0420210958} and
	\eqref{eq:Nov252021210}, we obtain
	$\norm[L^2(\domain)]{\nabla_u \rpobj(u,\xi)}\leq \tau_{\mathscr{D}}/2$.
	Thus,
	$\norm[L^2(\domain)]{\nabla_u \rpobj(u,\xi)-\cE{\nabla_u \rpobj(u,\xi)}}
	\leq \tau_{\mathscr{D}}$.
	Combined with 
	part \ref{itm:boundedsubgaussian_essentiallybounded} in
	\Cref{lem:boundedsubgaussian}, we find that 
	\Cref{assumption:basicerrorestimate4} holds true with 
	$\tau_G = \tau_{\mathscr{D}}$. 
	Finally,  \Cref{prop:cinwadcsp} ensures
	that for each $N \in \naturals$, $\hat{\mathscr{D}}_N \subset 
	\vadcsp^{\mathscr{D}}$. 
	Hence, we can use the estimate
	established in \Cref{rem:basicerrorestimate5}.
	
	To derive the expectation bound \eqref{eq:expectationbound},
	we use \eqref{eq:coveringvadcsp}
	and \eqref{eq:expsup}.
	We choose 
	$\nu = \varepsilon/(4\max\{L_{\nabla \rpobj},1\})$, 
	yielding $2L_{\nabla \rpobj} \nu \leq \varepsilon/2$.
	Using \eqref{eq:expsup}, \eqref{eq:coveringvadcsp}, and 
	the estimate established in \Cref{rem:basicerrorestimate5}, we
	have
	\begin{align}
		\label{eq:expecationnormalmap}
		\cE{\norm[L^2(\domain)]{\phi(\bar{v}_N)}}
		\leq \tfrac{\varepsilon}{2}
		+
		\tfrac{\sqrt{3}\tau_{\mathscr{D}}}{ \sqrt{N}}
		\sqrt{\ln(2)\big(\varrho(4\max\{L_{\nabla \rpobj},1\}
			\radius^{\mathscr{D}}/(\alpha\varepsilon))^d+1\big)}.
	\end{align}
	Requiring  the right-hand side be less than or equal to 
	$\varepsilon$ and solving
	for $N$ yields   \eqref{eq:expectationbound}.

	It remains to show \eqref{eq:tailbound}.
	Using \eqref{eq:coveringvadcsp}, the tail bound \eqref{eq:probsup'},
	and \Cref{rem:basicerrorestimate5}, we have:
	if $\bar{v}_N \in \hat{\mathscr{D}}_N$, then
	$\phi(\bar{v}_N) < \varepsilon$
	and hence $\bar{v}_N \in \mathscr{D}^\varepsilon$
	with a probability of at least
	$$1-2 \cdot 2^{\varrho(4L_{\nabla \rpobj}
		\radius^{\mathscr{D}}/(\alpha\varepsilon))^d}
	\eu^{-N\varepsilon^2/(48\tau_{\mathscr{D}}^2)}.
	$$
	Requiring this term be greater or equal to $1-\delta$
	and solving for $N$,  we obtain \eqref{eq:tailbound}.
\end{proof}

	\Cref{thm:samplesize} establishes sample size estimates 
	in terms of the normal map \eqref{eq:phi}. 
	Next we derive sample size estimates for another popular 
	criticality measure, which 
	can be used within termination criteria for numerical methods,
	for example. In \cref{sect:numillus}, we provide numerical illustrations
	using this criticality measure.
We define the criticality measure $\chi : L^2(\domain) \to \real$
for \eqref{eq:ocp} by
\begin{align}
	\label{eq:cmeasure}
	\chi(u) = 
	\norm[L^2(\domain)]{u- \prox{\psi/\alpha}{-(1/\alpha)\nabla \erpobj(u)}}
\end{align}
and  the set of $\varepsilon$-critical points of
\eqref{eq:ocp} by
$\mathscr{C}^\varepsilon = \{\, u \in \adcsp \colon \, 
\chi(u) \leq \varepsilon
\, \}$ for $\varepsilon \geq 0$.
\begin{corollary}
	\label{cor:samplesize}
	Under the hypotheses of \Cref{thm:samplesize},
	and $\varepsilon > 0$, it follows that
	\emph{(a)}
	if 
	$\bar u_N$ is a measurable selection of $\hat{\mathscr{C}}_N $
	and $N$ satisfies \eqref{eq:expectationboundN},
	then
	$
	\cE{\chi(\bar u_N)}
	\leq \varepsilon/\alpha
	$,
	and
	\emph{(b)}
	if  $\delta \in (0,1)$ and
	$N$ fulfills \eqref{eq:tailboundN}, 
	then 
	$
	\Prob{\hat{\mathscr{C}}_N 
		\subset \mathscr{C}^{(\varepsilon/\alpha)}}
	\geq 1-\delta
	$.
\end{corollary}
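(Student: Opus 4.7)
The strategy is to reduce both parts to Theorem~\ref{thm:samplesize} via the single inequality
\begin{align*}
\chi(\prox{\psi/\alpha}{v}) \leq (1/\alpha)\norm[L^2(\domain)]{\phi(v)}
\quad \tfa v \in L^2(\domain),
\end{align*}
which I will establish first. Setting $u=\prox{\psi/\alpha}{v}$, the definition of $\phi$ gives $-(1/\alpha)\nabla \erpobj(u)= v - (1/\alpha)\phi(v)$, and firm (hence $1$-Lipschitz) nonexpansivity of $\prox{\psi/\alpha}{}$ yields
\begin{align*}
\chi(u) = \norm[L^2(\domain)]{\prox{\psi/\alpha}{v}-\prox{\psi/\alpha}{v-(1/\alpha)\phi(v)}} \leq (1/\alpha)\norm[L^2(\domain)]{\phi(v)}.
\end{align*}

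For part~(a), given a measurable selection $\bar u_N$ of $\hat{\mathscr{C}}_N$, I would set $\bar v_N = -(1/\alpha)\nabla \hat{\erpobj}_N(\bar u_N)$. Measurability of $\bar v_N$ follows from the Carath\'eodory property of $\nabla \hat{\erpobj}_N$ established via \Cref{lem:Feb0720211501,lem:Nov252021204}. The defining relation $\bar u_N = \prox{\psi/\alpha}{-(1/\alpha)\nabla \hat{\erpobj}_N(\bar u_N)} = \prox{\psi/\alpha}{\bar v_N}$ then yields $\bar v_N = -(1/\alpha)\nabla \hat{\erpobj}_N(\prox{\psi/\alpha}{\bar v_N})$, so $\bar v_N$ is a measurable selection of $\hat{\mathscr{D}}_N$ via \eqref{eq:DN}. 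Applying Theorem~\ref{thm:samplesize} with sample size satisfying \eqref{eq:expectationboundN} gives $\cE{\norm[L^2(\domain)]{\phi(\bar v_N)}}\leq \varepsilon$, and the key inequality together with $\bar u_N=\prox{\psi/\alpha}{\bar v_N}$ finishes part~(a).

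For part~(b), I would argue via the deterministic set inclusion
\begin{align*}
\{\hat{\mathscr{D}}_N \subset \mathscr{D}^\varepsilon\} \subset \{\hat{\mathscr{C}}_N \subset \mathscr{C}^{\varepsilon/\alpha}\}.
\end{align*}
Indeed, if $\hat{\mathscr{D}}_N(\omega)\subset \mathscr{D}^\varepsilon$ and $u\in \hat{\mathscr{C}}_N(\omega)$, then by \eqref{eq:normalcritical} there exists $v\in \hat{\mathscr{D}}_N(\omega)\subset \mathscr{D}^\varepsilon$ with $u=\prox{\psi/\alpha}{v}$, whence the key inequality gives $\chi(u)\leq (1/\alpha)\norm[L^2(\domain)]{\phi(v)}\leq \varepsilon/\alpha$. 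Theorem~\ref{thm:samplesize} under \eqref{eq:tailboundN} yields the required probability lower bound; measurability of the event $\{\hat{\mathscr{C}}_N\subset \mathscr{C}^{\varepsilon/\alpha}\}$ is handled analogously to \Cref{lem:missues} by combining closedness of $\mathscr{C}^{\varepsilon/\alpha}$ (continuity of $\chi$, itself a consequence of continuity of $\nabla \erpobj$ and $\prox{\psi/\alpha}{}$) with \Cref{lem:inclusionmeasurable}.

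The only mildly delicate point is the measurability bookkeeping in part~(a), specifically checking that the constructed $\bar v_N$ inherits measurability from $\bar u_N$; this is routine given the Carath\'eodory property of $\nabla \hat{\erpobj}_N$. Everything else is a direct consequence of Theorem~\ref{thm:samplesize} and the single nonexpansivity estimate above.
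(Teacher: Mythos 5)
Your proposal is correct and follows essentially the same route as the paper: the same firm-nonexpansivity estimate $\chi(\prox{\psi/\alpha}{v})\leq (1/\alpha)\norm[L^2(\domain)]{\phi(v)}$ reduces both parts to \Cref{thm:samplesize} via the identity \eqref{eq:normalcritical}. The one genuine (minor) difference is in the measurability bookkeeping for part (a): the paper obtains the measurable selection $\bar v_N$ of $\hat{\mathscr{D}}_N$ with $\bar u_N=\prox{\psi/\alpha}{\bar v_N}$ by invoking Filippov's theorem applied to \eqref{eq:normalcritical}, whereas you construct it explicitly as $\bar v_N=-(1/\alpha)\nabla\hat{\erpobj}_N(\bar u_N)$ and deduce its measurability from the \Caratheodory\ property of $\nabla\hat{\erpobj}_N$; your explicit construction is valid, somewhat more elementary, and avoids the selection theorem, at the cost of having to verify joint measurability of $(u,\omega)\mapsto\nabla\hat{\erpobj}_N(u)(\omega)$ composed with $\bar u_N$, which indeed follows from \Cref{lem:Feb0720211501}.
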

\begin{proof}
	The measurability of $\{\, \hat{\mathscr{C}}_N 
	\subset \mathscr{C}^{(\varepsilon/\alpha)} \,\}$
	can be established using arguments  similar  to those in the proof of
	\Cref{lem:missues}. Fix $v \in L^2(\domain)$ and define $u = 
	\prox{\psi/\alpha}{v}$.
	Since the proximity operator is firmly nonexpansive 
	\cite[Prop.\ 12.28]{Bauschke2011}, we have
	\begin{align}
	\label{eq:chiphi}
	\begin{aligned}
	\chi(u)
	& = 
	\norm[L^2(\domain)]{\prox{\psi/\alpha}{v}-\prox{\psi/\alpha}
		{-(1/\alpha)\nabla \erpobj(u)}}
	\\
	& \leq \norm[L^2(\domain)]{v+(1/\alpha)\nabla \erpobj(u)}
	 = (1/\alpha)\norm[L^2(\domain)]{\phi(v)},
	\end{aligned}
	\end{align}
	where $\phi$ is defined in \eqref{eq:phi}.
	Since $\bar{u}_N$ is a measurable selection of  $\hat{\mathscr{C}}_N$,
	\Cref{lem:missues}
	and Filippov's theorem \cite[Thm.\ 8.2.10]{Aubin2009}
	applied to the identity in \eqref{eq:normalcritical}
	ensure the existence 
	of a  measurable selection $\bar{v}_N$ of $\hat{\mathscr{D}}_N$
	defined in \eqref{eq:DN} such that
	$\bar{u}_N = \prox{\psi/\alpha}{\bar{v}_N}$.
	Now \Cref{thm:samplesize} implies the assertions.
\end{proof}

\section{Numerical illustrations}
\label{sect:numillus}
The main purposes of our numerical illustrations are to verify the typical Monte
Carlo convergence rate for $\cE{\chi(\bar u_N)}$ 
where $\bar u_N$ is an SAA critical point
and to  examine the dependence of our expectation 
bounds on the  parameter $\alpha$.
For numerical computations, 
the infinite dimensionality of \eqref{eq:ocp} and its SAA problem
\eqref{eq:saa} necessitate finite dimensional approximations. 
We also illustrate empirically that the expectation bounds 
are independent of
the dimension of the finite dimensional spaces.
	The estimate \eqref{eq:expecationnormalmap} 
	when combined with \eqref{eq:chiphi} yields
	for each $\varepsilon > 0$,
	\begin{align}
	\label{eq:expectationchi}
	\cE{\norm[L^2(\domain)]{\chi(\bar{u}_N)}}
	\leq \tfrac{\varepsilon}{2\alpha}
	+
	\tfrac{\sqrt{3}\tau_{\mathscr{D}}}{\alpha \sqrt{N}}
	\sqrt{\ln(2)\big(\varrho(4\max\{L_{\nabla \rpobj},1\}
		\radius^{\mathscr{D}}/(\alpha\varepsilon))^d+1\big)},
	\end{align}
	where $\chi$ is defined in \eqref{eq:cmeasure}. For fixed
	$\alpha > 0$ and $\varepsilon > 0$, 
	the second term in the right-hand side in \eqref{eq:expectationchi} 
	decays with the rate 
	$1/\sqrt{N}$ as a function of $N$. Moreover, for fixed
	$N \in \naturals$ and $\varepsilon > 0$, 
	the first term increases
	with rate $1/\alpha$
	and the second term with rate $1/\alpha^{d/2+1}$ as $\alpha$ approaches
	zero.

\begin{figure}[t]
	\centering
	\subfloat{%
		\includegraphics[width=0.465\textwidth]
		{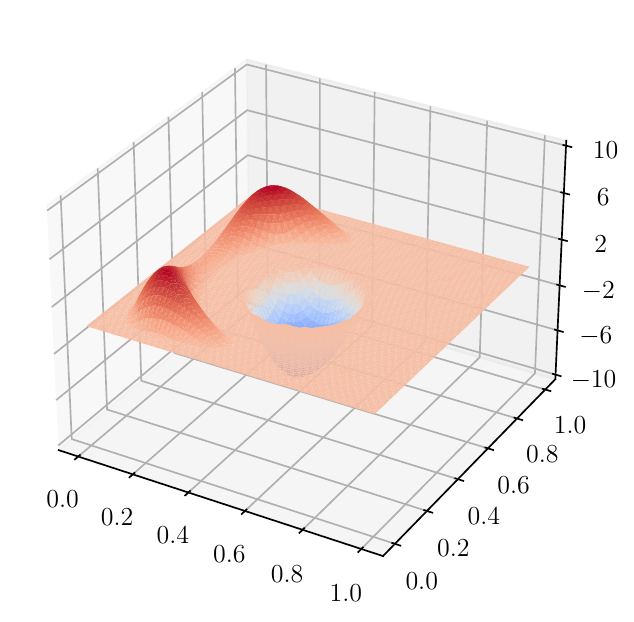}}
	\subfloat{%
		\includegraphics[width=0.465\textwidth]
		{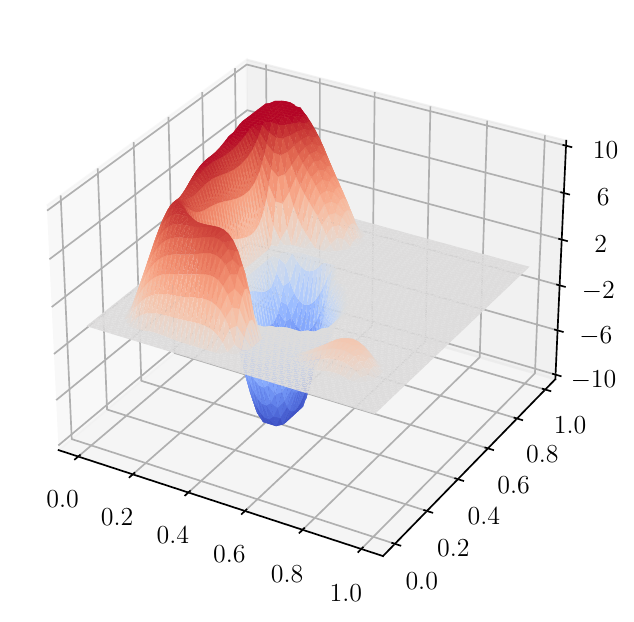}}
	\caption{For 
		$\alpha = 10^{-3}$
		and discretization parameter $n=64$,
		nominal critical point, that is,
		a critical point of \eqref{eq:nom} 
		\textnormal{(left)}, and a reference critical point
		of \eqref{eq:ocp}, that is, 
		a critical point 
		of \eqref{eq:saasob} \textnormal{(right)}.}
	\label{fig:nomref}
\end{figure}

We consider an instance of \eqref{eq:ocp}. 
Let $d = 2$ and $\domain = (0,1)^d$. For $\gamma \geq 0$, 
let $\psi(u) = \gamma \norm[L^1(\domain)]{u}$
if $u \in L^2(\domain)$ with $-10 \leq u \leq 10$
and $\psi(u) =  \infty$ otherwise. 
We define
$\Xi = [-1,1]^{100}$,  $\nop(t) = t^3$,
and
$\dualpHzeroone{A(\xi)y}{v} 
 = \int_\domain \kappa(x,\xi) \nabla y(x)^T \nabla v(x) \du x
$,
where $\kappa : \bar{\domain} \times \Xi \to \real$ is given by
$\kappa(x,\xi) = 
	\eu^{
		\sum_{k=1}^{25} 5/(2k^2) \sin(4k\xi_k \pi x_1)\sin(4k\pi \xi_{25+k}x_2)
	}
$
if $x_1 \leq 1/2$
and $\kappa(x,\xi) = 
3/2 +  \sum_{k=1}^{25}|(10/k^2)\xi_{25+k} \cos((10+\xi_{25+k})x_1x_2)|
$ otherwise.
We further define  $y_d(x) = -1$ if $x \in [1/4, 3/4]^2$
and $y_d(x) = 1$ otherwise,  
$b(x,\xi) = 1+\sum_{k=1}^{25}(5/k^2)\xi_{75+k} x_1x_2\cos(4\pi k x_1)\sin(4\pi k x_2)$
if $x_1 \leq 3/4 +\xi_{76}/2$
and
$b(x,\xi) = 1+|
\sum_{k=1}^{25} (3/k^2)x_2\xi_{75+k} \sin(3\pi x_2)\cos(3\pi(x_1-k\xi_{75+k}x_2))|
$
otherwise, 
and 
$g(x,\xi) = \max\{1,
\sum_{k=1}^{25} (10/k^2) \xi_{50+k} \sin((4+k)\xi_{50+k}x_1x_2)
\cos((4+k)\xi_{50+k}x_1x_2)
\}$.
The random variables $\xi_1, \ldots, \xi_{100}$ are independent
$[-1,1]$-uniformly distributed. 
The random fields $\kappa$, $b$, and $g$ are nonsmooth. Their 
definitions have been guided by the desire to design an instance
of \eqref{eq:Feb0320211603} with random fields that
 lack a representation as a  linear combination
of a moderate number of $L^2(\domain)$-orthogonal basis functions
and of  functions that are separable  with respect to the
vector $x $ and the parameters $\xi_1, \ldots, \xi_{100}$. 
In particular, each of the random fields $\kappa$, $b$, and $g$
lacks a respresentation as a truncated Karhunen--Lo\`eve-type expansion
with a small number of addends.

We discretized \eqref{eq:ocp} and \eqref{eq:saa} using a finite element
discretization;
$H_0^1(\domain)$ is approximated using piecewise linear continuous
finite elements and $L^2(\domain)$ is discretized using piecewise constant
functions defined  on a regular triangular mesh with  
a total of $2n^2$ triangles. 
Here, $n$ is the number of triangles in each direction in $(0,1)^2$.
For a fixed $n \in \mathbb{N}$, 
the finite dimensional control space is denote by $\csp_n$.
We use the subscript $n$ in $\rpobj_n$
and $\nabla \hat{\erpobj}_{N,n}$ to denote the approximations
of  $\rpobj$ (see \eqref{eq:rpobj}) and 
$\nabla \hat{\erpobj}_N$, respectively, resulting from the finite dimensional
approximations.

\begin{figure}[t]
	\centering
	\subfloat{%
		\includegraphics[width=0.465\textwidth]
		{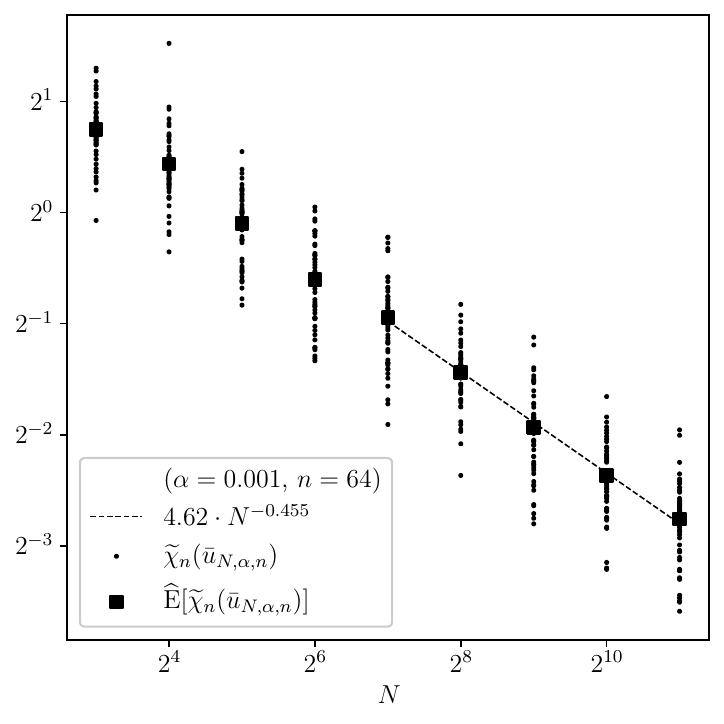}}
	\subfloat{%
		\includegraphics[width=0.465\textwidth]
		{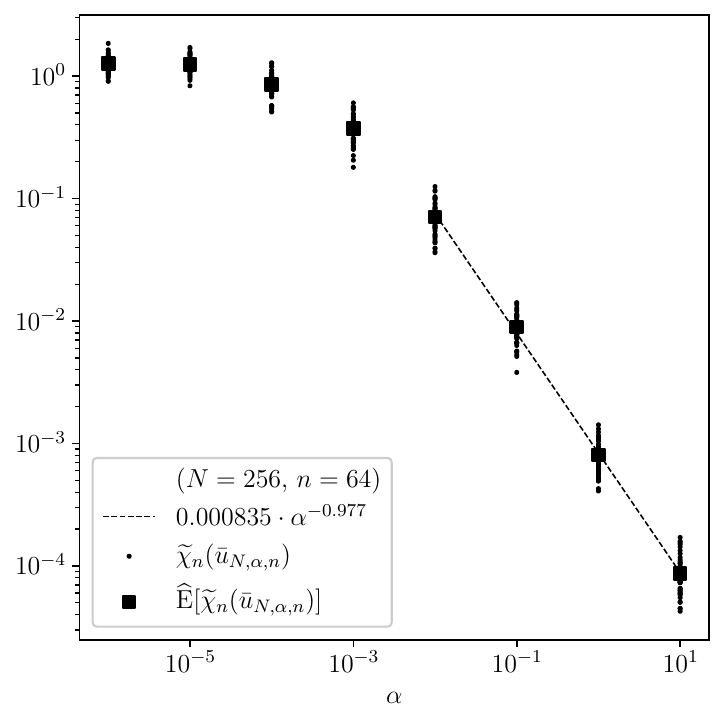}}
	\caption{
		For $n = 64$, empirical estimate
		$\widehat{\mathrm{E}}[\widetilde{\chi}_n(\bar u_{N,\alpha,n})]$
		of $\cE{\widetilde{\chi}_n(\bar u_{N,\alpha,n})}$
		over $N$ with $\alpha = 10^{-3}$
		\textnormal{(left)}
		and empirical estimate 
		$\widehat{\mathrm{E}}[\widetilde{\chi}_n(\bar u_{N,\alpha,n})]$
		of $\cE{\widetilde{\chi}_n(\bar u_{N,\alpha,n})}$
		over $\alpha$ with $N = 256$
		\textnormal{(right)}.
		Here, $\bar u_{N,\alpha,n}$ are SAA critical points
			of \eqref{eq:saafinitedim}
			computed with sample size $N$, regularization parameter
			$\alpha$, and discretization
			parameter $n$. The approximated criticality measure
			$\widetilde{\chi}_n$ is defined in \eqref{eq:chiapprox}. For both plots, $48$ independent realizations of
		the criticality measure $\widetilde{\chi}_n(\bar u_{N,\alpha,n})$ are depicted.
		The convergence rates were computed using least squares.
		For each plot, the first four empirical means were excluded 
		for the least squares computations.
	}
	\label{fig:errors}
\end{figure}

\begin{figure}[t]
	\centering
	\subfloat{%
		\includegraphics[width=0.465\textwidth]
		{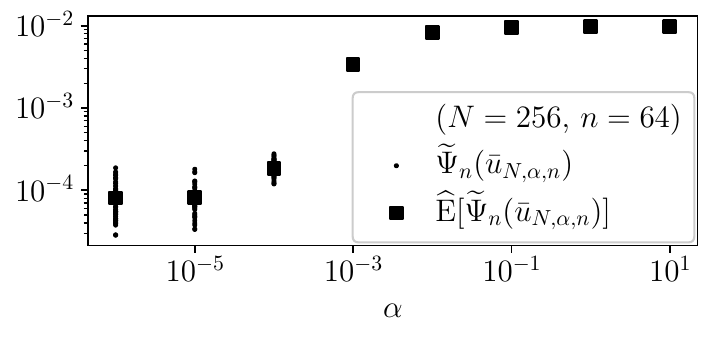}}
	\subfloat{%
		\includegraphics[width=0.465\textwidth]
		{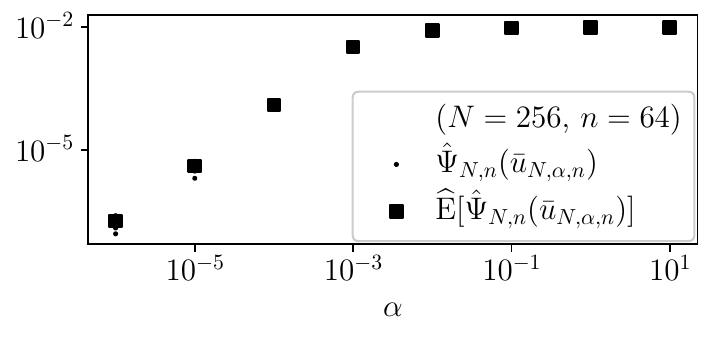}}
	\caption{%
		For $n = 64$ and $N=256$, empirical mean
		$\widehat{\mathrm{E}}[\widetilde{\Psi}_{n}(\bar u_{N,\alpha,n})]$
		of
		$\mathbb{E}[\widetilde{\Psi}_{n}(\bar u_{N,\alpha,n})]$
		over $\alpha$
		\textnormal{(left)} and
		empirical mean
		$\widehat{\mathrm{E}}[\hat{\Psi}_{N,n}(\bar u_{N,\alpha,n})]$
		of
		$\mathbb{E}[\hat{\Psi}_{N,n}(\bar u_{N,\alpha,n})]$
		over $\alpha$
		\textnormal{(right)}.
		Here, $\bar u_{N,\alpha,n}$ are the SAA critical points
		of \eqref{eq:saafinitedim} used in
		\Cref{fig:errors} \textnormal{(right)}.
		The  criticality measures
		$\widetilde{\Psi}_{n}$ 
		and
		$\hat{\Psi}_{N,n}$
		are defined in \eqref{eq:tikhonovapprox}.
	}
	\label{fig:tikhonov}
\end{figure}

The criticality measure $\chi$ in  \eqref{eq:cmeasure}
involves the gradient $\nabla \erpobj(\cdot) = \cE{\nabla_u \rpobj(\cdot,\xi)}$.
To approximate $\chi$, we generated $N_1 = 2^{13}$ samples
using a Sobol' sequence \cite{Joe2008} and transformed
the samples to take values in $\Xi$.
Let us denote these samples by
 $\widetilde{\xi}_1, \ldots, \widetilde{\xi}_{N_1}$.
Defining $\widetilde{\erpobj}_{N_1,n}(\cdot) = (1/N_1) \sum_{i=1}^{N_1}
\rpobj_n(\cdot,\widetilde{\xi}_i)
$, the criticality measure $\chi$ is approximated by
$\widetilde{\chi}_n : L^2(\domain) \to [0,\infty)$ defined by
\begin{align}
	\label{eq:chiapprox}
	\widetilde{\chi}_{n}(u) = 
	\norm[L^2(\domain)]{u- \prox{\psi/\alpha}{-(1/\alpha)
			\nabla \widetilde{\erpobj}_{N_1,n}(u)}}.
\end{align}
We also use the samples  $\widetilde{\xi}_1, \ldots, \widetilde{\xi}_{N_1}$ 
to approximate the risk-neutral PDE-constrained
optimization problem \eqref{eq:ocp} by the finite dimensional SAA problem
\begin{align}
	\label{eq:saasob}
	\min_{u \in \csp_n}\, 
	\frac{1}{N_1} \sum_{i=1}^{N_1}
	\rpobj_n(u,\widetilde{\xi}_i)
	+ (\alpha/2) \norm[L^2(\domain)]{u}^2
	+ \psi(u).
\end{align}
Finally, we approximate the 
infinite dimensional SAA problems \eqref{eq:saa} by
the finite dimensional SAA problems
\begin{align}
	\label{eq:saafinitedim}
	\min_{u \in \csp_n}\, 
	\frac{1}{N} \sum_{i=1}^N\rpobj_n(u,\xi^i)
	+ (\alpha/2) \norm[L^2(\domain)]{u}^2
	+ \psi(u).
\end{align}

Inspired by the choice made in 
\cite[p.\ 199]{Parikh2014}, 
we chose $\gamma = 0.2\hat{\gamma}_{\max}$,
where 
$\hat{\gamma}_{\max} = 
\norm[L^\infty(\domain)]{\nabla \hat{\erpobj}_{10,64}(0)}$.
We rounded $0.2\hat{\gamma}_{\max}$ to three significant figures, yielding
$\gamma = 7.48\cdot 10^{-3}$. This value is used for all simulations.
The computations used to generate the simulation output
depicted in 
\Cref{fig:nomref,fig:errors,fig:dimension2,fig:dimension}
were performed on a
Linux cluster with 48 CPUs (Intel Xeon CPU E7-8857 v2 3.00GHz) and 1TB of RAM\@.
\Cref{fig:tikhonov} is based on output of simulations
performed on the  PACE Phoenix cluster \cite{PACE2017}.
We used \href{http://www.dolfin-adjoint.org/}
{\texttt{dolfin-adjoint}} 
\cite{Mitusch2019,Funke2013}
with \href{https://fenicsproject.org/}{\texttt{FEniCs}}
\cite{Alnes2015,Logg2012} 
to evaluate the cost functions
and their derivatives. The problems \eqref{eq:saasob} and \eqref{eq:saafinitedim}
were solved using a
semismooth Newton-CG method applied to a normal map
\cite{Mannel2020,Ulbrich2011}. Its implementation  is based on
that of \href{https://github.com/funsim/moola}{\texttt{Moola}}'s 
\texttt{NewtonCG}\ method \cite{Nordaas2016}.\footnote{%
	Our computer code,
	simulation output, and figures depicting samples
	of $\kappa$, $g$, and $b$ are available at \url{https://github.com/milzj/SAA4PDE/tree/semilinear_complexity/simulations/semilinear_complexity}.} 
We present numerical illustrations for moderate values 
of the space discretization parameter $n$ 
and sample size $N$. These parameter choices are aimed at balancing
the empirical demonstration of our theoretical
findings with computational resource use.

\begin{figure}[t]
	\centering
	\subfloat{%
		\includegraphics[width=0.465\textwidth]{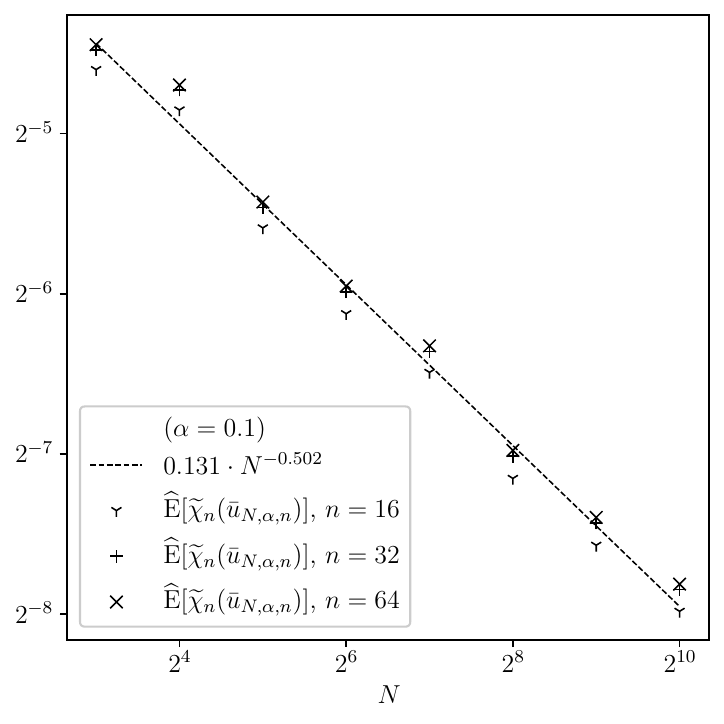}}
	\subfloat{%
		\includegraphics[width=0.465\textwidth]
		{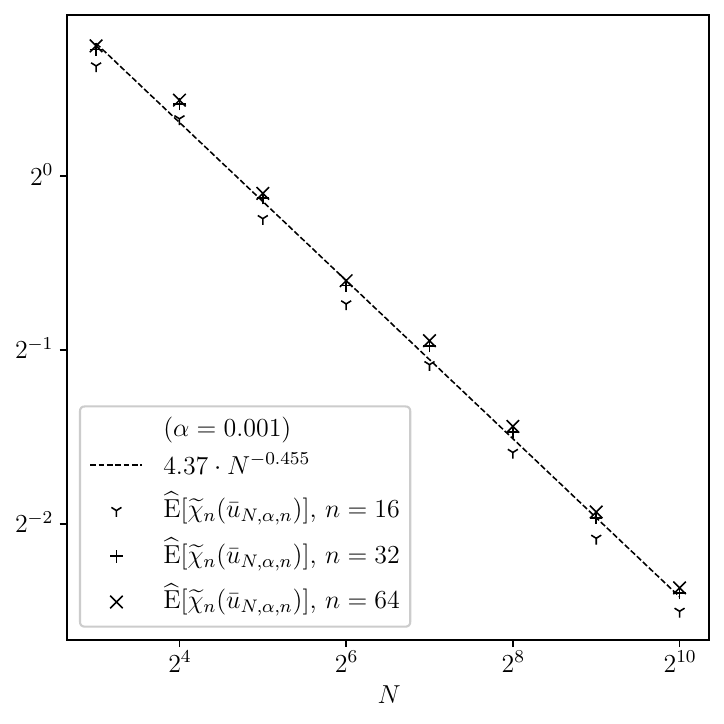}}
	\caption{
			For multiple values of the discretization parameter $n$,
			empirical estimates
			$\widehat{\mathrm{E}}[\widetilde{\chi}_n(\bar u_{N,\alpha,n})]$
			of $\cE{\widetilde{\chi}_n(\bar u_{N,\alpha,n})}$
			over $N$ with $\alpha = 10^{-1}$
			\textnormal{(left)}
			and with $\alpha = 10^{-3}$
			\textnormal{(right)}.
			Here, $\bar u_{N,\alpha,n}$ are SAA critical points
			of \eqref{eq:saafinitedim}
			computed with sample size $N$, regularization parameter
			$\alpha$, and discretization
			parameter $n$. The approximated criticality measure
			$\widetilde{\chi}_n$ is defined in \eqref{eq:chiapprox}.
			After averaging the empirical estimates
			for each  $N$, 
			the convergence rates were computed using least squares.
	}
	\label{fig:dimension2}
\end{figure}

For visualization, critical points were interpolated to
the discretized state space. For $\alpha = 10^{-3}$
and $n=64$,
\Cref{fig:nomref} depicts a nominal critical point, that is, 
a critical point of the
nominal problem
\begin{align}
	\label{eq:nom}
	\min_{u \in \csp_n}\,
	\rpobj_n(u,\cE{\xi})+(\alpha/2)\norm[L^2(\domain)]{u}^2 + \psi(u)
\end{align}
and a critical point of \eqref{eq:saasob}.
Throughout the section, 
we estimate  $\cE{\widetilde{\chi}_n(\bar u_{N,\alpha,n})}$
using $48$ independent realizations of
the SAA critical point $\bar u_{N,\alpha,n}$
of \eqref{eq:saafinitedim} and denote the
empirical estimate by 
$\widehat{\mathrm{E}}[\widetilde{\chi}_n(\bar u_{N,\alpha,n})]$.

For $n=64$,
\Cref{fig:errors} depicts the empirical estimate of
$\cE{\widetilde{\chi}_n(\bar u_{N,\alpha,n})}$ with $\alpha = 10^{-3}$
and shows the typical Monte Carlo
convergence rate $1/\sqrt{N}$.
\Cref{fig:errors} also depicts
the empirical estimate of $\cE{\widetilde{\chi}_n(\bar u_{N,\alpha,n})}$
for various values of $\alpha$, which highlights a dependence
on the expected error of SAA critical points on the 
parameter $\alpha$. 
To shed some light on the convergence behavior of the empirical estimate
of $\cE{\widetilde{\chi}_n(\bar u_{256,\alpha,64})}$
for $\alpha \to 0^+$ as depicted in \Cref{fig:errors}, 
we  interpret \eqref{eq:ocp}, \eqref{eq:saasob}, and \eqref{eq:saafinitedim}
as PDE-constrained optimization problems
resulting from a Tikhonov regularization \cite[pp.\ 29--37]{Dontchev1993}
with regularization parameter $\alpha$ and 
empirically demonstrate that $\bar u_{256,\alpha,64}$ provide
approximate critical points of the optimization problems
\begin{align}
	\label{eq:saafinitedimzero}
	\min_{u \in \csp_n}\, 
	\frac{1}{N_1} \sum_{i=1}^{N_1}
	\rpobj_n(u,\widetilde{\xi}_i)
	+ \psi(u)
	\quad \text{and} \quad 
	\min_{u \in \csp_n}\, 
	\frac{1}{N} \sum_{i=1}^N\rpobj_n(u,\xi^i)
	+ \psi(u)
\end{align}
as $\alpha \to 0^+$. We define 
$\widetilde{\Psi}_{n}$, $\hat{\Psi}_{N,n} \colon L^2(\domain) \to [0,\infty)$ by
\begin{align}
	\label{eq:tikhonovapprox}
	\begin{aligned}
	\widetilde{\Psi}_{n}(u) & = 
	\norm[L^2(\domain)]{u- \prox{\psi}{u-
			\nabla \widetilde{\erpobj}_{N_1,n}(u)}}, 
	\\
	\hat{\Psi}_{N,n}(u) & = 
	\norm[L^2(\domain)]{u- \prox{\psi}{u-
			\nabla \hat{\erpobj}_{N,n}(u)}}.
	\end{aligned}
\end{align}
The function $\widetilde{\Psi}_{n}$ is 
a criticality measure for the first problem in \eqref{eq:saafinitedimzero}
and $\hat{\Psi}_{N,n}$ is one
for the second problem in \eqref{eq:saafinitedimzero};
cf., e.g., \cite[sect.\ 2.1.2]{Milzarek2019}.
\Cref{fig:tikhonov} depicts the 
empirical mean of 
$\widetilde{\Psi}_{64}(\bar u_{256,\alpha,64})$
and $\hat{\Psi}_{256,64}(\bar u_{256,\alpha,64})$
computed using $48$ independent SAA critical points $\bar u_{256,\alpha,64}$
for various values of $\alpha$. This may suggest
that the controls $\bar u_{256,\alpha,64}$ provide
approximate critical points of the optimization
problems in \eqref{eq:saafinitedimzero} as $\alpha \to 0^+$.

\Cref{fig:dimension2} depicts the empirical estimates
of $\cE{\widetilde{\chi}_n(\bar u_{N,\alpha,n})}$
over the sample size 
$N$ for multiple values of the discretization parameter $n$.
For fixed sample sizes $N$ and regularization parameters $\alpha$, 
\Cref{fig:dimension} depicts
empirical estimates of $\cE{\widetilde{\chi}_n(\bar u_{N,\alpha,n})}$ for multiple discretization parameters $n$. 
The simulation output in \Cref{fig:dimension2,fig:dimension} 
indicates that the expectation
$\cE{\widetilde{\chi}_n(\bar u_{N,\alpha,n})}$ may be independent
of the discretization parameter $n$ and hence of the dimension $2n^2$ of
the finite dimensional control space $\csp_n$.
The theoretical analysis of this empirical observation
is beyond the scope of this manuscript 
and is left as a topic for future research.
For risk-neutral linear PDE-constrained
optimization, the SAA approach
with finite dimensional approximations
of the control and state spaces are analyzed in 
\cite{Hoffhues2020,Martin2021,Milz2022c} with respect to
proximity of finite dimensional SAA solutions to the ``true'' optimal
control.

\begin{figure}[t]
	\centering
	\subfloat{%
		\includegraphics[width=0.465\textwidth]
		{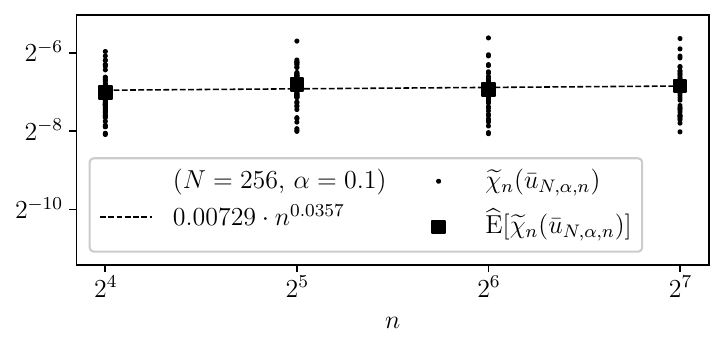}}
	\subfloat{%
		\includegraphics[width=0.465\textwidth]
		{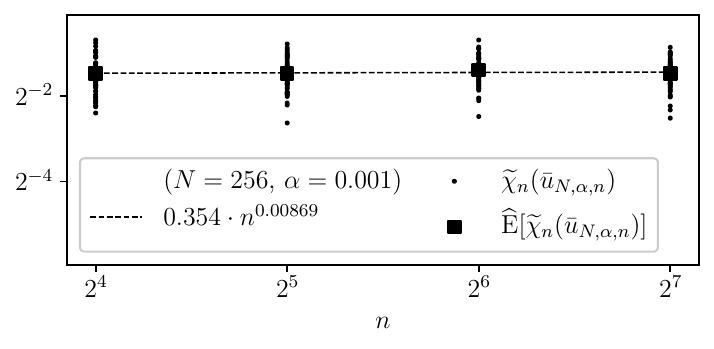}}
	\caption{
		Empirical estimate
		$\widehat{\mathrm{E}}[\widetilde{\chi}_n(\bar u_{N,\alpha,n})]$
		of $\cE{\widetilde{\chi}_n(\bar u_{N,\alpha, n})}$
		over the  discretization parameter $n$ with
		$N = 256$ and $\alpha = 10^{-1}$
		\textnormal{(left)}
		and with $N = 256$ and $\alpha = 10^{-3}$
		\textnormal{(right)}.
		Here, $\bar u_{N,\alpha,n}$ are SAA critical points
			of \eqref{eq:saafinitedim}
			computed with sample size $N$, regularization parameter
			$\alpha$, and discretization
			parameter $n$. The approximated criticality measure
			$\widetilde{\chi}_n$ is defined in \eqref{eq:chiapprox}.
		For both plots, $48$ independent realizations of
		the criticality measure $\widetilde{\chi}_n(\bar u_{N,\alpha,n})$ are depicted.
		The convergence rates were computed using  least squares.
	}
	\label{fig:dimension}
\end{figure}

\section{Discussion}
\label{sec:discussion}
We established nonasymptotic sample size estimates 
for  SAA critical points of
risk-neutral semilinear PDE-constrained optimization, a
class of infinite dimensional, nonconvex stochastic programs. 
To derive the sample size estimates,
we constructed a compact subset of the feasible
set containing all SAA critical points. 
Using covering numbers of Sobolev function classes, we
derived nonasymptotic sample size estimates inspired
by the analyses in \cite{Shapiro2003,Shapiro2005,Shapiro2021}. 
The construction of the compact
set exploits structure in PDE-constrained optimization problems.
This structure has been used for other purposes
in different ways, such as for establishing
mesh-independence of semismooth Newton methods \cite{Hintermueller2004},
developing smoothing steps within semismooth
Newton methods \cite{Ulbrich2011}, and
deriving higher regularity
of solutions to deterministic PDE-constrained optimization problems
\cite{Casas2012a}.

	Risk-neutral semilinear PDE-constrained optimization 
	is a subfield of infinite dimensional 
	optimization with dynamical systems under uncertainty.
	Our approach to establishing sample size estimates 
	may be applied to risk-neutral PDE-constrained optimization problems
	other than those considered here. Among other things, our derivations
	rely on the Lipschitz continuity properties of the reduced parameterized
	objective function's gradient and covering numbers of
	a set containing all SAA critical points. 
	These key properties may be verified
	for objective functions other than tracking-type functions
	\eqref{eq:rpobj} and parameterized operator equations other than 
	the semilinear PDE \eqref{eq:Feb0320211603}. 
	
	Risk-averse PDE-constrained optimization 
	\cite{Garreis2021,Kouri2018,Kouri2016} provides a more general approach
	to optimization of complex systems under uncertainty
	than risk-neutral optimization. 
	For risk-averse semilinear 
	PDE-constrained optimization using the average/conditional 
	value-at-risk, our
	analysis when combined with considerations
	in \cite[sect.\ 4.2]{Lan2012}
	may be adapted to constructing a compact set
	containing all SAA critical points. 
	While deriving  sample size estimates for critical points is 
	complicated by the risk-averse objective function's nonsmoothness, 
	establishing sample size estimates for optimal values
	and optimal solutions may be possible using uniform exponential
	tail bounds.

Using \Cref{thm:birman1967}, 
the dependence on the dimension $d$ in the sample size estimates
\eqref{eq:expectationboundN} and \eqref{eq:tailboundN} 
could be improved to $d/2$ if 
there exists $\radius > 0$
such that $\nabla_u \rpobj(u,\xi) \in H^2(\domain)$
and $\norm[H^2(\domain)]{\nabla_u \rpobj(u,\xi)}
	\leq \radius$
for all $(u,\xi) \in \adcsp \times \Xi$. However, the constant 
$\radius^{\mathscr{D}}$ in
\eqref{eq:expectationboundN} and in \eqref{eq:tailboundN} 
must then be replaced by $\radius$ which could be significantly larger
than $\radius^{\mathscr{D}}$. For a class of linear elliptic PDEs
with random inputs,
$H^2(\domain)$-stability estimates are provided in
\cite{Teckentrup2013}. These stability estimates could be 
used to establish $\norm[H^2(\domain)]{\nabla_u \rpobj(u,\xi)}
\leq \radius$
for all $(u,\xi) \in \adcsp \times \Xi$, provided that
the parameterized elliptic operator 
$A$ satisfies additional assumptions.

\appendix

\section{Measurability of set inclusions}
\label{sec:setinclusion}
We establish a measurability statement of set inclusions which 
is used in \cref{sec:samplesizeestimates}.
This statement is essentially known.
As introduced in \cref{sec:notation},
$(\Theta, \mathcal{A}, \mu)$ is a complete 
probability space.

\begin{lemma}
	\label{lem:inclusionmeasurable}
	Let $\dsp$ be a real, separable  Banach space and
	let $\Gamma : \Theta \rightrightarrows \dsp$
	be a
	measurable set-valued mapping with closed, nonempty images.
	If  $\Upsilon \subset \dsp$ is nonempty and closed, then
	$\{\, \theta \in \Theta\colon \, \Gamma(\theta) \subset \Upsilon \, \}$
	is measurable. 
\end{lemma}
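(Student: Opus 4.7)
The plan is to reduce the inclusion $\Gamma(\theta) \subset \Upsilon$ to a countable family of pointwise membership conditions by invoking a Castaing representation of $\Gamma$, which is available because $\dsp$ is separable, $\Gamma$ is measurable, and $\Gamma$ has closed, nonempty images.

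First I would apply Castaing's representation theorem (see, e.g., \cite[Thm.~8.1.4]{Aubin2009}): there exists a countable family of measurable selections $\gamma_n : \Theta \to \dsp$ of $\Gamma$ such that
\begin{align*}
\Gamma(\theta) = \overline{\{\gamma_n(\theta) : n \in \naturals\}}
\quad \text{for all } \theta \in \Theta,
\end{align*}
where the closure is taken in $\dsp$. Each $\gamma_n$ is (strongly) measurable, and by separability of $\dsp$ this coincides with Borel measurability.

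Next I would use the fact that $\Upsilon$ is closed. For any $\theta \in \Theta$, the inclusion $\Gamma(\theta) \subset \Upsilon$ holds if and only if $\gamma_n(\theta) \in \Upsilon$ for every $n \in \naturals$: the ``only if'' direction is immediate from $\gamma_n(\theta) \in \Gamma(\theta)$, and the ``if'' direction follows because the closure of $\{\gamma_n(\theta)\}_{n\in\naturals}$ is contained in $\Upsilon$ whenever all $\gamma_n(\theta)$ lie in the closed set $\Upsilon$.

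Therefore I would write
\begin{align*}
\{\,\theta \in \Theta : \Gamma(\theta) \subset \Upsilon\,\}
= \bigcap_{n \in \naturals} \gamma_n^{-1}(\Upsilon),
\end{align*}
and conclude measurability: $\Upsilon$ is closed, hence Borel, and each $\gamma_n$ is measurable, so each preimage $\gamma_n^{-1}(\Upsilon)$ lies in $\cA$, and $\cA$ is closed under countable intersections. I do not anticipate a real obstacle here; the only delicate point is verifying that Castaing's theorem applies in the stated form (separability of $\dsp$ and the measurability/closed-valuedness hypotheses on $\Gamma$ are exactly its assumptions), and invoking the theorem carefully with a pointer to \cite[Thm.~8.1.4]{Aubin2009} or an equivalent reference suffices.
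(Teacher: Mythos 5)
Your proof is correct, but it takes a genuinely different route from the paper. The paper's argument is a two-line observation: since $\dsp \setminus \Upsilon$ is open and $\Gamma$ is measurable in the sense recalled in \cref{sec:notation} (inverse images of open sets lie in $\cA$), the set $\Gamma^{-1}(\dsp \setminus \Upsilon)$ is measurable, and
$\{\, \theta \in \Theta \colon \Gamma(\theta) \subset \Upsilon \,\} = \Theta \setminus \Gamma^{-1}(\dsp \setminus \Upsilon)$,
because $\Gamma(\theta) \subset \Upsilon$ exactly when $\Gamma(\theta)$ misses the open complement. You instead invoke the Castaing representation theorem to write $\Gamma(\theta)$ as the closure of countably many measurable selections $\gamma_n(\theta)$ and reduce the inclusion to $\bigcap_{n} \gamma_n^{-1}(\Upsilon)$; the equivalence you use (all $\gamma_n(\theta) \in \Upsilon$ iff $\Gamma(\theta) \subset \Upsilon$) is valid precisely because $\Upsilon$ is closed and the $\gamma_n(\theta)$ are dense in $\Gamma(\theta)$. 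Both proofs are sound. The paper's approach is more elementary and in fact needs neither the separability nor the completeness of $\dsp$, nor the nonemptiness of the images --- it uses only the definition of measurability of $\Gamma$ and the openness of $\dsp \setminus \Upsilon$. Your approach leans on a heavier selection theorem whose hypotheses (complete separable metric range space, closed nonempty values, measurability of $\Gamma$) do all hold here, and it buys slightly more: the same countable-intersection formula would also handle inclusion into an arbitrary Borel set $\Upsilon$ for which the ``density'' argument can be replaced, whereas the paper's complement trick is tied to $\Upsilon$ being closed. For the statement as given, the paper's direct argument is the shorter and more self-contained one.
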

\begin{proof}
	Since $\dsp \setminus \Upsilon$ is open
	and $\Gamma$ is  measurable, 
	$\Gamma^{-1}(\dsp \setminus \Upsilon)$ is measurable.
	Combined with
	$
	\{\, \theta \in \Theta\colon \, \Gamma(\theta) \subset \Upsilon \, \}
	= 
	\Theta \setminus \Gamma^{-1}(\dsp \setminus \Upsilon)
	$, we find that
	$\{\, \theta \in \Theta\colon \, \Gamma(\theta) \subset \Upsilon \, \}$
	is measurable.
\end{proof}

\section{Sub-Gaussian-type  bounds for maxima of random vectors}
\label{sec:subgaussianbounds}
We establish expectation and exponential tail bounds
for pointwise maxima of sub-Gaussian-type Hilbert space-valued
random vectors. 
The techniques used to derive
these results are similar to those used
to establish expectation and tail bounds
for pointwise maxima of sub-Gaussian real-valued random variables.
However, we use \cite[Thm.\ 3]{Pinelis1986} which provides
expectation bounds for sums of independent, mean-zero
Hilbert space-valued random vectors.
The results established in this section are
used in \cref{sect:uniformtailbounds} to derive uniform 
expectation and exponential tail bounds.

\begin{proposition}
	\label{prop:maxmean}
	Let $\tau > 0$, let $\hsp$ be a real, separable Hilbert space and
	for each $k   \in \{1,2, \ldots, K\}$,
	let
	$\rvv_{i,k} : \Theta\to \hsp$
	($i = 1, 2, \ldots, N$)
	be independent, mean-zero random 
	vectors with
	$\cE{\cosh(\lambda \norm[\hsp]{\rvv_{i,k}})}\leq \exp(\lambda^2\tau^2/2)$ 
	for all $\lambda \in \real$ 
	and $i \in \{  1, 2, \ldots, N \}$. 
	We define $\rv_k^{[N]} = (1/N) \sum_{i=1}^N \rvv_{i,k}$.
	Then for each $\varepsilon \geq 0$,
	\begin{align}
	\label{eq:maxmean}
	\cE{\max_{1\leq k \leq K} \norm[\hsp]{\rv_k^{[N]} }}
	& \leq \sqrt{3/2}\tau (1/\sqrt{N})\sqrt{2\ln(2K)},
	\\
	\label{eq:maxprob}
	\Prob{\max_{1\leq k \leq K} \norm[\hsp]{\rv_k^{[N]}} \geq \varepsilon}
	& \leq 2K \exp(-\tau^{-2}\varepsilon^2N/3).
	\end{align}
\end{proposition}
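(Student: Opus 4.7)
The plan is to reduce both \eqref{eq:maxmean} and \eqref{eq:maxprob} to a single Pinelis-type sub-Gaussian MGF bound for each $\norm[\hsp]{\rv_k^{[N]}}$ and then handle the maximum by a union bound (for the tail) and by Jensen's inequality (for the expectation).

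\textbf{Step 1 (single-index MGF).} Fix $k \in \{1,\ldots,K\}$. The random vectors $\rvv_{1,k},\ldots,\rvv_{N,k}$ are independent, mean zero, and take values in the separable Hilbert space $\hsp$, and their norms satisfy the sub-Gaussian hypothesis $\cE{\cosh(\lambda \norm[\hsp]{\rvv_{i,k}})} \leq \exp(\lambda^2 \tau^2/2)$. Since $\hsp$ is $2$-smooth with smoothness constant $1$, the partial sums $\big(\sum_{i\leq m}\rvv_{i,k}\big)_{m=1}^N$ form a Hilbert space-valued martingale to which a Pinelis-type conditional MGF recursion applies. Combining this with the elementary inequality $\exp(|x|)\leq 2\cosh(x)$ used to convert the $\cosh$-hypothesis into an exponential MGF on each summand, I expect to obtain
\begin{equation*}
\cE{\exp(\lambda \norm[\hsp]{\rv_k^{[N]}})} \leq 2\exp(3\lambda^2\tau^2/(4N)) \quad \text{for all } \lambda \geq 0.
\end{equation*}
The unusual constant $3/4$ (rather than the classical $1/2$) accounts for the factor $2$ absorbed in the $\exp$-versus-$\cosh$ conversion together with the Hilbert-space smoothness factor.

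\textbf{Step 2 (tail bound).} A Chernoff argument applied to the MGF bound of Step 1 and optimized at $\lambda = 2N\varepsilon/(3\tau^2)$ yields
\begin{equation*}
\Prob{\norm[\hsp]{\rv_k^{[N]}}\geq \varepsilon} \leq 2\exp(-\varepsilon^2 N/(3\tau^2)),
\end{equation*}
and a union bound over $k \in \{1,\ldots,K\}$ delivers \eqref{eq:maxprob}.

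\textbf{Step 3 (expectation bound).} By Jensen's inequality (for the convex function $\exp$) and a union bound,
\begin{equation*}
\exp\!\big(\lambda \cE{\textstyle\max_k \norm[\hsp]{\rv_k^{[N]}}}\big) \leq \cE{\textstyle\max_k \exp(\lambda\norm[\hsp]{\rv_k^{[N]}})} \leq 2K\exp(3\lambda^2\tau^2/(4N)).
\end{equation*}
Taking logarithms and optimizing over $\lambda > 0$ at $\lambda = 2\sqrt{N\ln(2K)/(3\tau^2)}$ produces
$\cE{\max_k \norm[\hsp]{\rv_k^{[N]}}} \leq \tau\sqrt{3\ln(2K)/N} = \sqrt{3/2}\,\tau\,(1/\sqrt{N})\sqrt{2\ln(2K)}$,
which is \eqref{eq:maxmean}.

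\textbf{Main obstacle.} Step 1 is the technical heart of the proof: one must carefully track constants when passing from the $\cosh$-hypothesis on each summand to an exponential MGF, and then propagate this control through the Pinelis conditional recursion in the $2$-smooth Hilbert setting, so as to land exactly on the factor $3/4$ (equivalently, the $3$ in the tail exponent). Once the single-index MGF bound with the correct constant is in hand, Steps 2 and 3 are routine Chernoff and Jensen computations.
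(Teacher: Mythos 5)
Your overall architecture coincides with the paper's: a sub-Gaussian-type moment bound for each $\norm[\hsp]{\rv_k^{[N]}}$ obtained from Pinelis' inequality for Hilbert space-valued sums of independent mean-zero vectors, followed by a union bound for the tail and the standard log-sum-exp smoothing for the expectation. Your Steps~2 and~3 are correct computations, and the target bound of your Step~1, $\cE{\exp(\lambda \norm[\hsp]{\rv_k^{[N]}})}\leq 2\exp(3\lambda^2\tau^2/(4N))$, is exactly what the paper's intermediate lemmas deliver (a $\cosh$-bound $\cE{\cosh(\lambda\norm[\hsp]{\rv_k^{[N]}})}\leq\exp(3\lambda^2\tau^2/(4N))$ combined with $\exp(|x|)\leq 2\cosh(x)$).

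However, Step~1 itself is a genuine gap as written: you conjecture the constant $3/4$ rather than derive it, and the accounting you offer for it --- ``the factor $2$ absorbed in the $\exp$-versus-$\cosh$ conversion together with the Hilbert-space smoothness factor'' --- does not work. If you convert the hypothesis summand by summand via $\exp(|x|)\leq 2\cosh(x)$ and then multiply moment generating functions, you pick up a factor $2^N$, which destroys the bound. The mechanism the paper actually uses is essential and absent from your sketch: Pinelis' Theorem~3 gives the product bound $\cE{\cosh(\lambda\norm[\hsp]{\rvv_{1,k}+\cdots+\rvv_{N,k}})}\leq\prod_{i=1}^N\cE{\exp(\lambda\norm[\hsp]{\rvv_{i,k}})-\lambda\norm[\hsp]{\rvv_{i,k}}}$ for $\lambda\geq 0$, and each factor is then controlled by the elementary pointwise inequality $\exp(s)-s\leq\cosh(\sqrt{3/2}\,s)$, which combined with the hypothesis evaluated at $\sqrt{3/2}\,\lambda$ yields $\exp(3\lambda^2\tau^2/4)$ per factor with no multiplicative constant. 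The factor $2$ enters only once, at the very end, when the resulting $\cosh$-bound on the sum is converted into $\exp$-based Chernoff and Jensen estimates (hence the $2K$ rather than $K$). Without the inequality $\exp(s)-s\leq\cosh(\sqrt{3/2}\,s)$, or an equivalent device, the recursion you describe does not land on the constant $3/4$, so the technical heart of your proof is missing rather than merely deferred.
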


The proof of \Cref{prop:maxmean} is presented at the end of the section. 
Let $\rv : \Theta \to \real$  be a random variable.
We provide a motivation for the condition
\begin{align}
\label{eq:subgaussiantype}
\cE{\cosh(\lambda |\rv|)}\leq \exp(\lambda^2\tau^2/2)
\quad \tfa \quad \lambda \in \real,
\end{align}
where $\tau >  0$.
A random variable $\rv : \Theta \to \real$
is sub-Gaussian with parameter $\tau$
if $\tau \in [0, \infty)$
and  $\cE{\exp(\lambda \rv)} \leq \exp(\lambda^2\tau^2/2)$
for all $\lambda \in \real$ \cite[p.\ 9]{Buldygin2000}.
Since 
$\cosh(x) = (\eu^{-x}+\eu^x)/2$ for all $x \in \real$, 
a sub-Gaussian random variable $\rv$ with parameter $\tau > 0$
fulfills \eqref{eq:subgaussiantype}. 
If $\hsp$ is a real, separable Hilbert space
and $\rvv : \Theta \to \hsp $  is a nondegenerate, 
mean-zero Gaussian random vector, 
then \eqref{eq:subgaussiantype} holds
with $\rv = \norm[\hsp]{\rvv}$
and $\tau = \cE{\norm[\hsp]{\rvv}^2}^{1/2}$
\cite[Rem.\ 4]{Pinelis1986}.
Suppose that there exists $\sigma > 0$ such that
\begin{align}
\label{eq:subgaussian}
\cE{\exp( |\rv|^2/\sigma^2)} \leq \eu.
\end{align}
This condition and its variants are used 
in the literature on stochastic programming
\cite{Nemirovski2009,Shapiro2021,Lan2020}
with $\rv$ being the norm of a stochastic (sub)gradient,
for example.
Let us demonstrate that \eqref{eq:subgaussiantype}
and \eqref{eq:subgaussian} are essentially equivalent.
\begin{lemma}
	\label{lem:boundedsubgaussian}
	Let $\rv : \Theta \to \real$  be a random variable.
	\begin{enumerate}[nosep,leftmargin=*]
		\item If \eqref{eq:subgaussiantype} holds
		with $\tau > 0$, then \eqref{eq:subgaussian}
		holds with
		$\sigma = (2\tau^2/(1-\eu^{-2}))^{1/2}$.
		\item If \eqref{eq:subgaussian} is satisfied with
		$\sigma > 0$, then 
		\eqref{eq:subgaussiantype} 
		is fulfilled with $\tau = 2^{1/4}\sigma$.
		\item 
		\label{itm:boundedsubgaussian_essentiallybounded}
		If $\tau > 0$  and  $|\rv| \leq \tau$ \wpone, then
		\eqref{eq:subgaussiantype} is fulfilled.
	\end{enumerate}
\end{lemma}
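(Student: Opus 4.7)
The plan is to treat the three parts separately, as each uses a different device.

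For part (a), I would multiply the hypothesis $\cE{\cosh(\lambda|\rv|)}\leq \eu^{\lambda^2\tau^2/2}$ by the centered Gaussian density $(2\pi/a)^{-1/2}\eu^{-a\lambda^2/2}$ (any $a>\tau^2$, so the resulting Gaussian integral converges) and integrate in $\lambda\in\real$. Interchanging integral and expectation via Fubini, the closed-form evaluation $\int_{\real}\eu^{-a\lambda^2/2}\cosh(\lambda y)\,\du\lambda=\sqrt{2\pi/a}\,\eu^{y^2/(2a)}$ turns the left-hand side into $\cE{\eu^{\rv^2/(2a)}}$, while the right-hand side collapses to $\sqrt{a/(a-\tau^2)}$. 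Choosing $2a=\sigma^2$ and requiring this to be bounded by $\eu$ forces $\sigma^2\geq 2\tau^2/(1-\eu^{-2})$, which is exactly the claimed formula.

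For part (b), I would bound $\cE{\cosh(\lambda|\rv|)}$ by cases on $|\lambda|$, pivoting around the threshold $|\lambda|=\sqrt{2}/\sigma$. In the small-$|\lambda|$ regime, the Taylor-series comparison $\cosh(y)\leq \eu^{y^2/2}$ gives $\cE{\cosh(\lambda|\rv|)}\leq\cE{\eu^{\lambda^2\rv^2/2}}=\cE{(\eu^{\rv^2/\sigma^2})^{\lambda^2\sigma^2/2}}$, after which Jensen's inequality (applied to the concave map $x\mapsto x^p$ with $p=\lambda^2\sigma^2/2\in(0,1]$) combined with $\sigma^2\leq\sqrt{2}\sigma^2=\tau^2$ finishes this branch. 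In the large-$|\lambda|$ regime, I would instead use $\cosh(y)\leq \eu^{|y|}$ together with Young's inequality $|\lambda\rv|\leq\alpha\lambda^2/2+\rv^2/(2\alpha)$, picking $\alpha=\sigma/|\lambda|$ on the intermediate range $|\lambda|\in[\sqrt{2}/\sigma,2/\sigma]$ and $\alpha=\sigma^2/2$ for $|\lambda|>2/\sigma$; Jensen once more controls $\cE{\eu^{c\rv^2/\sigma^2}}$ whenever the exponent satisfies $c\leq1$, and a direct algebraic check verifies that the resulting exponent is dominated by $\lambda^2\tau^2/2$ in each subregime.

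Part (c) is immediate: from $\cosh(\lambda|\rv|)\leq \eu^{\lambda^2\rv^2/2}$ (again from termwise Taylor comparison) and $\rv^2\leq\tau^2$ almost surely, one has $\cosh(\lambda|\rv|)\leq \eu^{\lambda^2\tau^2/2}$ pathwise, so taking expectation finishes the proof.

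The only substantive obstacle is part (b): the Jensen-based bound is sharp only up to $\lambda^2\sigma^2=2$, while the simplest Young-type bound with a fixed $\alpha=\sigma^2/2$ only becomes effective slightly above that threshold, leaving a narrow gap. The $|\lambda|$-dependent choice of $\alpha$ closes this gap, and it is precisely this adjustment that is responsible for the particular constant $2^{1/4}$ appearing in $\tau=2^{1/4}\sigma$.
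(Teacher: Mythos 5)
Your proposal is correct and follows essentially the same route as the paper: part (a) is the same Gaussian-weight integration with Fubini (the paper's normalization $s=1-\eu^{-2}$, $\sigma^2=2\tau^2/s$ is exactly your choice $2a=\sigma^2$ with $a=\tau^2/(1-\eu^{-2})$), part (b) uses the same split at $\lambda^2\sigma^2=2$ with $\cosh(x)\leq\exp(x^2/2)$ plus Jensen below the threshold and $\cosh(x)\leq\exp(|x|)$ plus Young plus Jensen above it, and part (c) is the same pathwise bound. The only cosmetic difference is in part (b)'s large-$\lambda$ regime, where the paper avoids your piecewise choice of the Young parameter by taking the single value $\alpha=\sigma^2/\sqrt{2}$ (i.e.\ $2\lambda s\leq\lambda^2\sigma^2/\sqrt{2}+\sqrt{2}s^2/\sigma^2$), which already covers all of $\lambda^2\sigma^2>2$ and yields the same constant $\tau=2^{1/4}\sigma$.
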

\begin{proof}
	\begin{enumerate}[nosep,leftmargin=*,wide]
		\item 		
		We adapt the proof of \cite[Lem.\ 1.6 on p.\ 9]{Buldygin2000}.
		We define $ s= 1-\eu^{-2}$
		and $\sigma^2 = 2\tau^2/s$.
		We have for all $z \in \real$
		(cf.\  \cite[p.\ 9]{Buldygin2000}),
		\begin{align*}
			\int_{\real}\eu^{\lambda^2\tau^2(s-1)/(2s)}\du \lambda
			= \tfrac{1}{\tau} \sqrt{\tfrac{2\pi s}{1-s}}
			\quad \tand \quad 
			\int_\real \cosh(\lambda z)\eu^{-\lambda^2\tau^2/(2s)} \du \lambda 
			=  \tfrac{\sqrt{2\pi s}}{\tau}\eu^{z^2/\sigma^2}.
		\end{align*}
		Multiplying \eqref{eq:subgaussiantype} by 
		$\eu^{-\lambda^2\tau^2/(2s)}$ yields 
		$\cE{\cosh(\lambda|\rv|)\eu^{-\lambda^2\tau^2/(2s)}} 
		\leq \eu^{\lambda^2\tau^2(s-1)/(2s)}$
		for all $\lambda \in \real$.
		Integrating both sides over $\lambda \in \real$
		and using Fubini's theorem, we obtain
		$
		\tfrac{\sqrt{2\pi s}}{\tau} \cE{\eu^{|\rv|^2/\sigma^2}}
		\leq 
		\tfrac{1}{\tau} \sqrt{\tfrac{2\pi s}{1-s}}
		$.
		Hence,
		$\cE{\eu^{|\rv|^2/\sigma^2}} \leq 1/(1-s)^{1/2} = \eu$.
				\item 
		The proof is inspired by that of
		\cite[Prop.\ 9.81]{Shapiro2021}.
		Fix $\lambda \in \real$
		with $\lambda^2\sigma^2/2 \leq 1$.
		Using Jensen's inequality and
		\href{https://tinyurl.com/4xxc2t4e}{$\cosh(x) \leq \exp(x^2/2)$
			for all $x \in \real$}
		\cite[eq.\ (4.6.6)]{Tropp2015}, 
		\begin{align*}
			\cE{\cosh(\lambda|\rv|)} \leq 
			\cE{\eu^{\lambda^2|\rv|^2/2}}
			= \cE{\eu^{\lambda^2\sigma^2|\rv|^2/(2\sigma^2)}}
			\leq
 \cE{\eu^{|\rv|^2/\sigma^2}}^{\lambda^2\sigma^2/2}
 		\leq \eu^{\lambda^2\sigma^2/2}.
		\end{align*}
		Now let $\lambda \in \real$
		with $\lambda^2\sigma^2/2 > 1$.
		Young's inequality ensures
		$2\lambda s \leq \lambda^2\sigma^2/\sqrt{2} + \sqrt{2}s^2/\sigma^2$
		 for all $s \in \real$. Combined with
		$\cosh(x) \leq \exp(x)$ being valid for all $x \geq 0$,
		the symmetry of $\cosh$, 
		$\sqrt{2}\lambda^2\sigma^2/4  >  \sqrt{2}/2$,
		 and Jensen's inequality, we have
		\begin{align*}
			\cE{\cosh(\lambda|\rv|)}
			\leq \eu^{\lambda^2\sigma^2/(2\sqrt{2})}
			\cE{\eu^{ \sqrt{2}|\rv|^2/(2\sigma^2)}}
			\leq  \eu^{\lambda^2\sigma^2/(2\sqrt{2}) + \sqrt{2}/2}
			< \eu^{\sqrt{2}\lambda^2\sigma^2/2}.
		\end{align*}
		Putting together the pieces, we obtain the assertion.
		
		\item Since $\cosh(x) \leq \exp(x^2/2)$
		for all $x \in \real$ \cite[eq.\ (4.6.6)]{Tropp2015}
		and $\cosh$
		is a symmetric function, we have
		$\cE{\cosh(\lambda|\rv|)} \leq 
		\cosh(\lambda\tau) \leq \exp(\lambda^2\tau^2/2)$
		for all $\lambda \in \real$.
	\end{enumerate}
\end{proof}

\begin{lemma}
	\label{prop:saa:2020-11-21T20:25:01.71}
	Let $\tau > 0$, 
	let $\hsp$ be a real, separable Hilbert space, and let
	$\rv_i : \Theta\to \hsp$
	be independent, mean-zero random vectors such that
	$\cE{\cosh(\lambda \norm[\hsp]{\rv_i})}\leq \exp(\lambda^2\tau^2/2)$ 
	for all $\lambda \in \real$   ($i = 1, 2, \ldots, N$). Then, for each
	 $\lambda \in \real$,
	$$
	\cE{\cosh(\lambda\norm[\hsp]{ \rv_1 + \cdots + \rv_N})} 
	\leq \exp(3\lambda^2 \tau^2 N/4).
	$$
\end{lemma}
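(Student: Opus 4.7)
My approach starts from the Hoeffding identity $\cosh(x) \le \exp(x^2/2)$, which gives $\mathbb{E}\cosh(\lambda\|S_N\|_H) \le \mathbb{E}\exp(\lambda^2\|S_N\|_H^2/2)$ where $S_N = Z_1 + \cdots + Z_N$; by evenness of $\cosh$ I may restrict to $\lambda \ge 0$. The Hilbert-space expansion $\|S_k\|_H^2 = \|S_{k-1}\|_H^2 + 2\langle S_{k-1}, Z_k\rangle_H + \|Z_k\|_H^2$ then supports an inductive argument along the natural filtration $\mathcal F_k = \sigma(Z_1,\ldots,Z_k)$, in which the cross term is a martingale increment (since $\mathbb{E} Z_k = 0$ and $S_{k-1}$ is $\mathcal F_{k-1}$-measurable) and $|\langle S_{k-1},Z_k\rangle_H| \le \|S_{k-1}\|_H \|Z_k\|_H$ by Cauchy–Schwarz.

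At step $k$, I condition on $\mathcal F_{k-1}$ and write
$$
\mathbb{E}\bigl[\exp(\lambda^2\|S_k\|_H^2/2)\,\big|\,\mathcal F_{k-1}\bigr] = \exp(\lambda^2\|S_{k-1}\|_H^2/2)\cdot\mathbb{E}\exp\bigl(\lambda^2\langle S_{k-1},Z_k\rangle_H + \tfrac{\lambda^2}{2}\|Z_k\|_H^2\bigr),
$$
and split the second factor by a weighted Cauchy–Schwarz (or Young's) inequality into a cross-term MGF and a squared-norm MGF. The cross-term MGF is controlled by applying the hypothesis at scale $\mu \propto \lambda^2\|S_{k-1}\|_H$, leveraging $|\langle S_{k-1},Z_k\rangle_H| \le \|S_{k-1}\|_H\|Z_k\|_H$ together with $\mathbb{E}\cosh(\mu\|Z_k\|_H)\le\exp(\mu^2\tau^2/2)$. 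The squared-norm MGF is obtained by Taylor-expanding $\exp(\tfrac{\lambda^2}{2}\|Z_k\|_H^2)$ term-by-term and using the moment bounds $\mathbb{E}\|Z_k\|_H^{2j} \le (2j)!\,\tau^{2j}/(2^j j!)$ read off coefficient-by-coefficient from the $\cosh$-MGF hypothesis, then summed via the identity $\sum_{j\ge 0}\binom{2j}{j} z^j = (1-4z)^{-1/2}$. Iterating the conditional estimate for $k=1,\ldots,N$ accumulates a factor $\exp(c\lambda^2\tau^2)$ per step, with the inductive hypothesis absorbing the $\exp(\lambda^2\|S_{k-1}\|_H^2/2)$ prefactor.

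\textbf{Main obstacle.} The delicate point is pinning the per-step constant at exactly $3/4$ rather than something larger. Both the Cauchy–Schwarz split between the cross-term and squared-norm pieces and the Taylor-tail estimate for $\exp(\tfrac{\lambda^2}{2}\|Z_k\|_H^2)$ introduce multiplicative losses relative to the ideal per-step cost of $1/2$; optimally balancing these, while using the full $\cosh$-form of the hypothesis (as opposed to the cruder corollary $\mathbb{E}\exp(\mu\|Z_k\|_H) \le 2\exp(\mu^2\tau^2/2)$), is what pins the overhead to exactly the factor $3/2$ and yields the claimed exponent $3\lambda^2 N\tau^2/4$.
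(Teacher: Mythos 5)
Your opening reduction is where the argument breaks. Applying $\cosh(x)\le \exp(x^2/2)$ with $x=\lambda\norm[\hsp]{S_N}$ replaces the target by $\cE{\exp(\lambda^2\norm[\hsp]{S_N}^2/2)}$, and this quantity is $+\infty$ for all but small $\lambda$: take $\hsp=\real$ and $\rv_i$ i.i.d.\ standard Gaussian (which satisfy the hypothesis with $\tau=1$, since $\cE{\cosh(\lambda|\rv_i|)}=\exp(\lambda^2/2)$); then $S_N\sim N(0,N)$ and $\cE{\exp(\lambda^2 S_N^2/2)}=\infty$ whenever $\lambda^2 N\ge 1$, while the claimed bound $\exp(3\lambda^2 N/4)$ is finite and correct for every $\lambda$. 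So no estimate of the asserted form, valid for all $\lambda\in\real$, can be reached through the squared-norm moment generating function. The same obstruction resurfaces inside your machinery: the series $\sum_{j\ge 0}\binom{2j}{j}z^j=(1-4z)^{-1/2}$ you invoke for the squared-norm MGF only converges for $|z|<1/4$, i.e.\ only for $\lambda^2\tau^2<1/2$ per summand, and the cross-term MGF taken at scale $\mu\propto\lambda^2\norm[\hsp]{S_{k-1}}$ contributes a factor $\exp(c\lambda^4\tau^2\norm[\hsp]{S_{k-1}}^2)$ that depends on the running partial sum, so the induction does not close with a fixed coefficient $\lambda^2/2$ in front of $\norm[\hsp]{S_k}^2$ — the effective exponent must be inflated at every step and the recursion blows up with $N$.

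The paper avoids all of this by keeping the norm \emph{linear} in the exponent. It invokes Pinelis's inequality (\Cref{thm:saa:2020-03-15T00:12:53.514}), which gives
$\cE{\cosh(\lambda\norm[\hsp]{\rv_1+\cdots+\rv_N})}\le \prod_{i=1}^N\cE{\exp(\lambda\norm[\hsp]{\rv_i})-\lambda\norm[\hsp]{\rv_i}}$ for $\lambda\ge 0$, then uses the elementary pointwise bound $\exp(s)-s\le\cosh(\sqrt{3/2}\,s)$ so that the hypothesis applies directly at scale $\sqrt{3/2}\,\lambda$, giving the factor $\exp(3\lambda^2\tau^2/4)$ per summand; symmetry of $\cosh$ extends the result to all $\lambda\in\real$. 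The tensorization across independent summands — the step your filtration argument is trying to reconstruct by hand — is exactly the content of Pinelis's theorem, and it is not recoverable from the crude bound $\cosh(x)\le\exp(x^2/2)$. If you want to salvage a self-contained argument, you would need to prove (or cite) a supermartingale inequality for $\cosh(\lambda\norm[\hsp]{S_k})$ itself, not for $\exp(\lambda^2\norm[\hsp]{S_k}^2/2)$.
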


\Cref{prop:saa:2020-11-21T20:25:01.71} is 
established using \Cref{thm:saa:2020-03-15T00:12:53.514}.

\begin{theorem}
	[{see \cite[Thm.\ 3]{Pinelis1986}}]
	\label{thm:saa:2020-03-15T00:12:53.514}
	Let $\hsp$ be a real, separable Hilbert space.
	If $\rv_i : \Theta \to \hsp$ 
	$(i=1, \ldots, N)$ are independent, mean-zero 
	random vectors, then
	$$
	\cE{\cosh(\lambda\norm[\hsp]{ \rv_1 + \cdots + \rv_N})} 
	\leq \prod_{i=1}^N 
	\cE{\exp(\lambda \norm[\hsp]{\rv_i})- \lambda \norm[\hsp]{\rv_i}}
	\quad \tfa \quad \lambda \geq 0.
	$$
\end{theorem}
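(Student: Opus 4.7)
The plan is induction on $N$. The base case $N = 1$ is the scalar pointwise inequality $\cosh(t) \leq e^t - t$ for $t \geq 0$, equivalently $\sinh(t) \geq t$ on $[0,\infty)$; applied with $t = \lambda \norm[\hsp]{\rv_1}$ and $\lambda \geq 0$, it yields the claim after taking expectation.

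For the inductive step, set $S_{N-1} = \rv_1 + \cdots + \rv_{N-1}$ and condition on $\sigma(\rv_1,\ldots,\rv_{N-1})$. By independence and the tower property, it suffices to prove, for every fixed $u \in \hsp$ and every independent mean-zero $v : \Theta \to \hsp$ satisfying the assumed moment bound,
\begin{align*}
\cE{\cosh(\lambda\norm[\hsp]{u+v})} \;\leq\; \cosh(\lambda\norm[\hsp]{u})\cdot\cE{\exp(\lambda\norm[\hsp]{v}) - \lambda\norm[\hsp]{v}}.
\end{align*}
The case $u = 0$ is the base case. For $u \neq 0$, the key is the pointwise Hilbert-space inequality
\begin{align*}
\cosh(\lambda\norm[\hsp]{u+v}) \;\leq\; \cosh(\lambda\norm[\hsp]{u})\bigl(e^{\lambda\norm[\hsp]{v}} - \lambda\norm[\hsp]{v}\bigr) + \lambda\,\frac{\sinh(\lambda\norm[\hsp]{u})}{\norm[\hsp]{u}}\,\inner[\hsp]{u}{v},
\end{align*}
whose final summand is linear in $v$ and therefore vanishes in expectation because $\cE{v} = 0$; this is the only place where the centering hypothesis enters the argument.

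To prove the pointwise inequality, expand $\norm[\hsp]{u+v}^2 = \norm[\hsp]{u}^2 + 2\inner[\hsp]{u}{v} + \norm[\hsp]{v}^2$ and set $r = \norm[\hsp]{u} > 0$, $s = \norm[\hsp]{v}$, and $t = \inner[\hsp]{u}{v}/(rs) \in [-1,1]$ (Cauchy--Schwarz). The inequality becomes a scalar statement in $t \in [-1,1]$ with $r,s,\lambda \geq 0$ fixed. Regarding the difference ``right-hand side minus left-hand side'' as a function $f$ of $t$, a direct differentiation combined with the elementary fact $x\cosh(x) \geq \sinh(x)$ for $x \geq 0$ shows that $f$ is concave on $[-1,1]$; hence $\min_{t \in [-1,1]} f(t)$ is attained at $t = \pm 1$. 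Since $\sqrt{r^2+2rs+s^2} = r+s$ and $\sqrt{r^2-2rs+s^2} = |r-s|$, the hyperbolic addition formula $\cosh(a \pm b) = \cosh(a)\cosh(b) \pm \sinh(a)\sinh(b)$ collapses the two endpoint cases to $e^{-\lambda r}(\sinh(\lambda s) - \lambda s) \geq 0$ and $e^{\lambda r}(\sinh(\lambda s) - \lambda s) \geq 0$, both of which are the base-case inequality.

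The main obstacle is the pointwise Hilbert-space inequality itself: it is sharper than what convexity of $\cosh$ or the triangle inequality alone delivers, and it genuinely uses the inner-product structure through the cross term $\inner[\hsp]{u}{v}$---precisely the term that is then annihilated by the centering $\cE{v} = 0$. Once this inequality is in hand, the induction proceeds mechanically by taking outer expectations against $\sigma(\rv_1,\ldots,\rv_{N-1})$ and iterating.
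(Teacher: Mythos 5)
The paper does not prove this theorem at all: it is imported verbatim from Pinelis \cite[Thm.\ 3]{Pinelis1986}, so there is no in-paper argument to compare against. Your proof is correct and self-contained, and it essentially reconstructs Pinelis's original argument: condition on $\sigma(\rv_1,\dots,\rv_{N-1})$, reduce to the one-step bound $\cE{\cosh(\lambda\norm[\hsp]{u+v})}\le\cosh(\lambda\norm[\hsp]{u})\,\cE{\eu^{\lambda\norm[\hsp]{v}}-\lambda\norm[\hsp]{v}}$ for fixed $u$ and mean-zero $v$, and obtain that from a pointwise inequality whose only $v$-odd term is $\lambda\,\sinh(\lambda\norm[\hsp]{u})\inner[\hsp]{u}{v}/\norm[\hsp]{u}$, annihilated by $\cE{v}=0$ (this uses Bochner integrability of $v$, which ``mean-zero'' supplies). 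I checked the three ingredients: concavity in $t=\inner[\hsp]{u}{v}/(rs)$ follows because $t\mapsto\cosh(\lambda\sqrt{r^2+2rst+s^2})$ has second derivative $\lambda r s\,(rs/g)\,(\lambda g\cosh(\lambda g)-\sinh(\lambda g))/g^2\ge 0$ via $x\cosh x\ge\sinh x$; the endpoints $t=\pm1$ collapse by the addition formulas to $\eu^{\mp\lambda r}(\sinh(\lambda s)-\lambda s)\ge 0$, which is exactly your base case $\sinh(t)\ge t$; and the degenerate point $g=0$ (only possible at $t=-1$ with $r=s$) is covered by continuity. Two cosmetic remarks: the theorem assumes no ``moment bound'' on $v$, and none is needed --- if $\cE{\eu^{\lambda\norm[\hsp]{\rv_i}}}=\infty$ for some $i$ the right-hand side is $+\infty$ because every factor is at least $1$, so the inequality is trivial; and the reduction of the conditional expectation to the fixed-$u$ statement should be justified by independence (the conditional expectation equals $\phi(S_{N-1})$ with $\phi(u)=\cE{\cosh(\lambda\norm[\hsp]{u+\rv_N})}$), which you implicitly do. No gaps.
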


\begin{proof}[{Proof of \Cref{prop:saa:2020-11-21T20:25:01.71}}]
	Fix $\lambda \geq 0$. We have \href{https://tinyurl.com/y4xq8tne}
	{$\exp(s)-s \leq \cosh(\sqrt{3/2} s )$
	for all $s \in \real$}.
	Hence,
	$
	\cE{\exp(\lambda \norm[\hsp]{\rv_i}) -\lambda \norm[\hsp]{\rv_i}}
	\leq \exp(3\lambda^2\tau^2/4)
	$.
	Combined with \Cref{thm:saa:2020-03-15T00:12:53.514}, 
	we find that 
	$
	\cE{\cosh(\lambda\norm[\hsp]{ \rv_1 + \cdots + \rv_N})} 
	\leq 
	\prod_{i = 1}^N  \exp(3\lambda^2 \tau^2/4) 
	= \exp(3\lambda^2 \tau^2 N/4)
	$.
	Since $\cosh$ is symmetric, the estimate is valid for all
	$\lambda \in \real$.
\end{proof}

	\Cref{lem:meanmax} establishes an expectation and an exponential
	tail bound for pointwise maxima of
	sub-Gaussian random variables similar to those found, e.g., in
	\cite[Prop.\ 7.29]{Foucart2013} and 
	\cite[sect.\ 2.5]{Boucheron2013}, 
	but we express 	sub-Gaussianity via \eqref{eq:subgaussiantype}. 

\begin{lemma}
	\label{lem:meanmax}
	Let $\sigma > 0$.
	Suppose that $\rv_k : \Theta \to \real$
	are random variables
	with $\cE{\cosh(\lambda |Z_k|)} \leq \exp(\lambda^2\sigma^2/2)$
	for all $\lambda \in \real$ and
	$k = 1, 2, \ldots, K$.
	Then for all $\varepsilon > 0$,
	\begin{align*}
	\cE{\max_{1\leq k \leq K} |Z_k|}
	\leq \sigma \sqrt{2\ln(2K)}
	\quad \tand \quad 
	\Prob{\max_{1\leq k \leq K} |Z_k| \geq \varepsilon}
	\leq 2K \eu^{-\varepsilon^2/(2\sigma^2)}.
	\end{align*}
\end{lemma}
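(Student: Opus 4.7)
The plan is to reduce both inequalities to the standard Chernoff/Cramér-style argument from the hypothesis $\cE{\cosh(\lambda|Z_k|)} \leq \exp(\lambda^2\sigma^2/2)$, applied to each $Z_k$ individually and then combined across $k$ via a union bound. The key link between the $\cosh$ hypothesis and the usual moment generating function bound is the elementary identity $\exp(\lambda|Z_k|) \leq \exp(\lambda Z_k)+\exp(-\lambda Z_k) = 2\cosh(\lambda|Z_k|)$, which immediately yields $\cE{\exp(\lambda|Z_k|)} \leq 2\exp(\lambda^2\sigma^2/2)$ for every $\lambda \in \real$.

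For the tail bound, I would fix $\lambda > 0$, apply the union bound and Markov's inequality to get
\begin{equation*}
\Prob{\max_{1\leq k \leq K}|Z_k| \geq \varepsilon}
\leq \sum_{k=1}^K \Prob{\exp(\lambda|Z_k|) \geq \exp(\lambda\varepsilon)}
\leq 2K \exp(\lambda^2\sigma^2/2 - \lambda\varepsilon),
\end{equation*}
and then minimize the right-hand side by choosing $\lambda = \varepsilon/\sigma^2$, producing the claimed $2K\exp(-\varepsilon^2/(2\sigma^2))$.

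For the expectation bound I would use the same moment generating function estimate together with Jensen's inequality applied to $\exp$: for any $\lambda > 0$,
\begin{equation*}
\exp\!\Bigl(\lambda\,\cE{\max_k |Z_k|}\Bigr)
\leq \cE{\exp\!\bigl(\lambda \max_k |Z_k|\bigr)}
= \cE{\max_k \exp(\lambda|Z_k|)}
\leq \sum_{k=1}^K \cE{\exp(\lambda|Z_k|)}
\leq 2K\exp(\lambda^2\sigma^2/2).
\end{equation*}
Taking logarithms and dividing by $\lambda$ gives $\cE{\max_k|Z_k|} \leq \ln(2K)/\lambda + \lambda\sigma^2/2$. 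Optimizing in $\lambda$ with the choice $\lambda = \sqrt{2\ln(2K)}/\sigma$ then yields the stated $\sigma\sqrt{2\ln(2K)}$.

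There is no real obstacle here; the only small subtlety is the passage from the symmetric bound on $\cosh$ to the one-sided bound on $\exp(\lambda|Z_k|)$, which is why the constant $2$ appears in front of $K$ in both conclusions. Everything else is standard Chernoff-type optimization, so the proof should be short.
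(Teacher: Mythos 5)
Your proof is correct and follows essentially the same route as the paper: the bound $\exp(\lambda|Z_k|)\leq 2\cosh(\lambda|Z_k|)$ to pass from the hypothesis to a moment generating function estimate, Jensen plus the log-sum-exp smoothing of the maximum for the expectation bound, and a union bound with Markov's inequality followed by Chernoff optimization in $\lambda$ for the tail bound. The optimizing choices $\lambda=\sqrt{2\ln(2K)}/\sigma$ and $\lambda=\varepsilon/\sigma^2$ match the paper's as well, so there is nothing to add.
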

\begin{proof}
	The proof of the expectation bound uses standard derivations
	based on ``smoothing''
	of the pointwise
	maximum.
	We have $\exp \leq 2\cosh$.
	For $\lambda > 0$, we have
	\begin{align*}
	\cE{\max_{1\leq k \leq K} |Z_k|}
	& \leq
	(1/\lambda) \ln \Big(\sum_{k=1}^K \cE{\exp(\lambda|Z_k|)} \Big)
	\leq 
	(1/\lambda) \ln \Big(\sum_{k=1}^K 2\cE{\cosh(\lambda|Z_k|)} \Big)
	\\
	& \leq 
	(1/\lambda) \ln \big(2K \exp(\lambda^2\sigma^2/2) \big)
	=
	(1/\lambda) \ln(2K) 
	+  \lambda^2\sigma^2/2.
	\end{align*}
	Choosing $\lambda = \sqrt{2\ln(2K)}/\sigma$ yields the
	expectation bound.

	The union bound
	and Markov's inequality ensure for all $\lambda > 0$,
	\begin{align*}
	\Prob{\max_{1\leq k \leq K} |Z_k| \geq \varepsilon}
	&\leq \sum_{k=1}^K 
	\Prob{|Z_k| \geq \varepsilon}
	\leq \eu^{-\lambda\varepsilon} 
	\sum_{k=1}^K 
	\cE{\exp(\lambda |Z_k|)}
	\\
	& \!\! \leq  2\eu^{-\lambda\varepsilon} 
	\sum_{k=1}^K 
	\cE{\cosh(\lambda |Z_k|)}
	\leq  2K \eu^{-\lambda\varepsilon} \eu^{\lambda^2\sigma^2/2}
	= 2K \eu^{-\lambda\varepsilon + \lambda^2\sigma^2/2}.
	\end{align*}
	Minimizing the right-hand side over $\lambda > 0$ yields
	the tail bound.
\end{proof}

\begin{proof}[{Proof of \Cref{prop:maxmean}}]
	We prove \eqref{eq:maxmean} and \eqref{eq:maxprob} using
	\Cref{prop:saa:2020-11-21T20:25:01.71,lem:meanmax}.
	Fix $\lambda \geq 0$.
	\Cref{prop:saa:2020-11-21T20:25:01.71}
	and 
	$\cE{\cosh(\lambda \norm[\hsp]{\rvv_{i,k}})}\leq \exp(\lambda^2\tau^2/2)$ 
	imply
	\begin{align*}
	\cE{\cosh(\lambda\norm[\hsp]{\rv_k^{[N]}})}
	\leq \exp(3\lambda^2 \tau^2 /(4N)).
	\end{align*}
	Defining $\sigma = \tau \sqrt{3/2}/\sqrt{N}$, 
	we have $\exp(3\lambda^2 \tau^2 /(4N)) 
	= \exp(\lambda^2\sigma^2/2)$.
	Now we apply \Cref{lem:meanmax}
	to the random variables $\norm[\hsp]{\rv_k^{[N]}}$, 
	yielding \eqref{eq:maxmean} and \eqref{eq:maxprob}.
\end{proof}

\section{Uniform exponential tail bounds for SAA normal maps}
\label{sect:uniformtailbounds}
We derive uniform expectation and exponential tail bounds for
an SAA normal map in Hilbert spaces.
The derivation of the tail bounds is inspired by those 
in \cite[Thms.\ 9.84 and 9.86]{Shapiro2021}
for real-valued functions. The uniform exponential tail bounds are used
in \cref{sec:samplesizeestimates} to derive nonasymptotic sample size
estimates.

Throughout the section, $\xi$, $\Xi$, $\xi^1, \xi^2, \ldots$
and $(\Omega, \cF, P)$ are as in 
\cref{sec:intro,sec:rnpdeopt}.
Let $\hsp$ be a real, separable
Hilbert space and let $\varphi: \hsp \to (-\infty,\infty]$
be proper, convex and lower semicontinuous. 
We define $\adhsp = \{\, u \in \hsp \colon \, \varphi(u) < \infty \, \}$.
Let  $G : \adhsp \times \Xi \to \hsp$ be a function. 
We define
$\hat{G}_N : \adhsp \to \hsp$ by
$\hat{G}_N(u) = (1/N)\sum_{i=1}^N G(u,\xi^i)$.
Let $\alpha > 0$.
We further define $\Phi$, $\hat{\Phi}_N : \hsp \to \hsp$ by
\begin{align*}
\Phi(v)
= \cE{G(\prox{\varphi/\alpha}{v},\xi)}
\quad \text{and} \quad 
\hat{\Phi}_N(v)
= \hat{G}_N(\prox{\varphi/\alpha}{v}). 
\end{align*}
Since $\xi^1, \xi^2, \ldots$ are defined on $\Omega$, we can
view $\hat{\Phi}_N$ as a function on $\hsp \times \Omega$.
The mappings $v \mapsto \Phi(v) + \alpha v$
and $v \mapsto \hat{\Phi}_N(v) + \alpha v$ define normal maps
\cite{Robinson1992}.
When establishing
the uniform tail bounds
in \Cref{prop:uniformboundsoperator}, we only rely on properties of their
difference, that is, on characteristics of  $\Phi-\hat{\Phi}_N$.
The following assumptions are inspired by those used
in \cite[sect.\ 9.2.11]{Shapiro2021}.
A random variable $\rv : \Theta \to \real$
is sub-exponential with parameters $(\tau, \Lambda)$
if $\tau \in [0, \infty)$, $\Lambda \in (0, \infty]$
and $\cE{\exp(\lambda Z)} \leq \exp(\lambda^2\tau^2/2)$
for all $\lambda \in (-\Lambda, \Lambda)$ \cite[p.\ 19]{Buldygin2000}.

\begin{assumption}[{Uniform exponential tail bounds: Problem data}]
	\label{assumption:basicerrorestimate}
	\begin{enumthm}[nosep,leftmargin=*]
		\item 
		\label{assumption:basicerrorestimate1}
		The mapping $G : \adhsp \times \Xi \to \hsp$
		is a \Caratheodory\ function, 
		and $G(u,\xi)$ is Bochner integrable for each $u \in \adhsp$.
		
		\item 
		\label{assumption:basicerrorestimate2}
		For an integrable random variable $M : \Xi \to (0,\infty)$, 
		\begin{align*}
		\norm[\hsp]{G(u_2,\xi)-G(u_1,\xi)} 
		\leq  M(\xi) \norm[\hsp]{u_2-u_1}
		\quad \tfa \quad u_2, u_1 \in \adhsp, \;\; \xi \in \Xi. 
		\end{align*}
		\item 
		\label{assumption:basicerrorestimate6}
		The random variable $M(\xi)-\cE{M(\xi)}$
		is sub-exponential with parameters
		$(\tau_M, \Lambda_M)$.
		\item 
		\label{assumption:basicerrorestimate4}
		There exists $\tau_G > 0$ such that,
		for each $u \in \adhsp$, 
		\begin{align}
		\label{eq:cEcosh}
		\cE{\cosh(\lambda \norm[\hsp]{G(u,\xi)-\cE{G(u,\xi)}})}
		\leq \exp(\lambda^2\tau_G^2/2)
		\quad \tfa \quad \lambda \in \real.
		\end{align}
		\item 
		\label{assumption:basicerrorestimate3}
		The $\nu$-covering number 
		of the nonempty, closed set $\vadcsp \subset \adhsp$ 
		is finite for all $\nu > 0$.
	\end{enumthm}
\end{assumption}

\Cref{assumption:basicerrorestimate3} ensures that
$\vadcsp$ is compact, as it is closed and totally bounded 
\cite[Lem.\ 8.2-2]{Kreyszig1978}. 	
We define $\hat{M}_N = (1/N) \sum_{i=1}^N M(\xi^i)$.
\begin{lemma}
	\label{lem:expecationgcontinuous}
	If \Cref{assumption:basicerrorestimate1,assumption:basicerrorestimate2}
	hold, then $\Phi$
	and $\hat{\Phi}_N$
	are Lipschitz continuous  with Lipschitz constants
	$\cE{M(\xi)}$ and $\hat{M}_N$, respectively.
	Moreover 
	$\sup_{v \in  \vadcsp}\, \norm[\hsp]{\hat{\Phi}_N(v)-\Phi(v)}$
	is measurable
	on $\Omega$.
\end{lemma}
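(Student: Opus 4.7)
The plan has three pieces: the two Lipschitz estimates and the measurability of the supremum. None of them should be delicate; the main content is just a careful bookkeeping of which measurability/integrability facts are needed where.

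For the Lipschitz estimate on $\hat{\Phi}_N(\cdot,\omega)$, fix $\omega \in \Omega$ and $v_1,v_2 \in \hsp$. I would combine \Cref{assumption:basicerrorestimate2} with the firm nonexpansiveness of the proximity operator \cite[Prop.\ 12.28]{Bauschke2011}, which yields
\begin{align*}
\norm[\hsp]{\hat{\Phi}_N(v_2)-\hat{\Phi}_N(v_1)}
&\leq \frac{1}{N}\sum_{i=1}^{N}\norm[\hsp]{G(\prox{\varphi/\alpha}{v_2},\xi^i)-G(\prox{\varphi/\alpha}{v_1},\xi^i)}\\
&\leq \frac{1}{N}\sum_{i=1}^{N}M(\xi^i)\norm[\hsp]{\prox{\varphi/\alpha}{v_2}-\prox{\varphi/\alpha}{v_1}}
\leq \hat{M}_N\norm[\hsp]{v_2-v_1}.
\end{align*}
The same estimate, applied before taking the expectation, together with the integrability of $M(\xi)$ from \Cref{assumption:basicerrorestimate2}, proves the Lipschitz bound for $\Phi$ with constant $\cE{M(\xi)}$. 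To justify the existence of $\Phi(v)$ as a Bochner integral for every $v \in \hsp$, I would use the estimate $\norm[\hsp]{G(\prox{\varphi/\alpha}{v},\xi)}\leq \norm[\hsp]{G(u_0,\xi)}+M(\xi)\norm[\hsp]{\prox{\varphi/\alpha}{v}-u_0}$ for a fixed reference $u_0\in\adhsp$, appealing to \Cref{assumption:basicerrorestimate1} for the Bochner integrability of $G(u_0,\xi)$, and invoking \cite[Prop.\ 1.2.2]{Hytoenen2016}.

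For measurability, observe that \Cref{assumption:basicerrorestimate3} makes $\vadcsp$ totally bounded and closed, hence compact, hence separable; let $\{v_k\}_{k\in\naturals}\subset\vadcsp$ be a countable dense subset. By the (pathwise) Lipschitz continuity of both $\Phi$ and $\hat{\Phi}_N$ just established, the function $v\mapsto \norm[\hsp]{\hat{\Phi}_N(v)-\Phi(v)}$ is continuous on $\vadcsp$, so
\begin{equation*}
\sup_{v\in\vadcsp}\norm[\hsp]{\hat{\Phi}_N(v)-\Phi(v)}=\sup_{k\in\naturals}\norm[\hsp]{\hat{\Phi}_N(v_k)-\Phi(v_k)}.
\end{equation*}
For each fixed $k$, the vector $\Phi(v_k)$ is deterministic, while $\hat{\Phi}_N(v_k)=(1/N)\sum_{i=1}^N G(\prox{\varphi/\alpha}{v_k},\xi^i)$ is a finite sum of measurable $\hsp$-valued random vectors (here I use that $G(\prox{\varphi/\alpha}{v_k},\cdot)$ is measurable by \Cref{assumption:basicerrorestimate1} and that the $\xi^i$ are $\Xi$-valued random vectors on $(\Omega,\cF,P)$); since $\hsp$ is separable, measurability equals strong measurability, and $\omega\mapsto\norm[\hsp]{\hat{\Phi}_N(v_k)-\Phi(v_k)}$ is a measurable real-valued function. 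The countable supremum of measurable functions is measurable, which completes the argument.

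The only potential snag is being tidy about measurability conventions — keeping Carathéodory, strong measurability, and measurability for $\hsp$-valued maps aligned (which relies on the separability of $\hsp$ via \cite[Cor.\ 1.1.2 and Thm.\ 1.1.6]{Hytoenen2016}); and making sure the reduction to a countable dense subset uses continuity on the compact set $\vadcsp$ rather than on all of $\hsp$. Everything else is a direct application of already-established structure.
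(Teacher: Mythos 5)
Your proof is correct, and the Lipschitz part coincides with the paper's argument: nonexpansiveness of $\prox{\varphi/\alpha}{}$ combined with \Cref{assumption:basicerrorestimate2}, plus integrability of $M(\xi)$ for the expectation. The only genuine difference is in the measurability step. The paper observes that $\norm[\hsp]{\hat{\Phi}_N(\cdot)-\Phi(\cdot)}$ is a \Caratheodory\ function and then cites \cite[Lem.\ 2.1]{Hiai1977}, which gives measurability of the supremum over any nonempty closed set; you instead unpack that citation by hand, passing to a countable dense subset and using continuity in $v$ together with measurability in $\omega$ for each fixed point. Both are valid, and your version is more self-contained. One small caveat: you justify the existence of a countable dense subset of $\vadcsp$ by invoking \Cref{assumption:basicerrorestimate3} (total boundedness, hence compactness), but that assumption is not among the hypotheses of this lemma, which assumes only \Cref{assumption:basicerrorestimate1,assumption:basicerrorestimate2}. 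This is easily repaired and does not affect correctness: $\hsp$ is separable, so every subset of it---in particular $\vadcsp$---is separable, and the countable dense subset exists without any compactness. With that adjustment your argument proves the lemma exactly as stated.
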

\begin{proof}
	For each $u \in \adhsp$, $\cE{G(u,\xi)}$ is well-defined. 
	Hence, 
	\Cref{assumption:basicerrorestimate2} ensures 
	that $\cE{G(\cdot,\xi)}$
	is Lipschitz continuous on $\adhsp$ with Lipschitz constant
	$\cE{M(\xi)}$.
	Since $\prox{\varphi/\alpha}{}$
	is firmly nonexpansive
	and $\prox{\varphi/\alpha}{\hsp} \subset \adhsp$,
	$\Phi$ is Lipschitz continuous with Lipschitz constant
	$\cE{M(\xi)}$. Similarly, we obtain that
	$\hat{\Phi}_N$ is Lipschitz continuous
	with Lipschitz constant $\hat{M}_N$.
	Combined with the fact that 
	$G$ is a \Caratheodory\ mapping
	on $\adhsp \times \Xi$ and that
	$\xi^i: \Omega \to \Xi$ ($i=1, 2, \ldots$) are random elements,
	we find that 
	$\norm[\hsp]{\hat{\Phi}_N(\cdot)-\Phi(\cdot)}$
	is a \Caratheodory\ map
	on $\adhsp \times \Omega$.
	Since  $\vadcsp \subset \adhsp$ 
	is nonempty and closed, 
	$\sup_{v \in  \vadcsp}\, \norm[\hsp]{\hat{\Phi}_N(v)-\Phi(v)}$
	is measurable \cite[Lem.\ 2.1]{Hiai1977}.
\end{proof}
\begin{proposition}
	\label{prop:uniformboundsoperator}
	Let \Cref{assumption:basicerrorestimate} hold.
	Then $\Phi$ is Lipschitz continuous
	with Lipschitz constant $L = \cE{M(\xi)}$ and
	for all $\nu > 0$,
	\begin{align}
	\label{eq:expsup}
	\cE{\sup_{v \in \vadcsp}\, \norm[\hsp]{\hat{\Phi}_N(v)-\Phi(v)}}
	\leq 2L\nu+  
	\tfrac{\sqrt{3}\tau_G}{\sqrt{N}}
	\sqrt{\ln(2\mathcal{N}(\nu; \vadcsp, \norm[\hsp]{\cdot}))}.
	\end{align}	
	Define  $\ell = L^2/(2\tau_M^2)$
	if $L \leq \Lambda_M \tau_M^2$
	and $\ell = \Lambda_M L/2$  otherwise.
	Then for all $\varepsilon > 0$,
	\begin{align*}
	\Prob{\sup_{v \in \vadcsp}\, \norm[\hsp]{\hat{\Phi}_N(v)-\Phi(v)}
		\geq \varepsilon}
	\leq 
	\eu^{-N\ell}
	+ 2\mathcal{N}(\varepsilon/(4L); \vadcsp, \norm[\hsp]{\cdot})
	\eu^{-N\varepsilon^2 /(48\tau_G^2)}.
	\end{align*}
	If furthermore $M(\xi) \leq L$ for all $\xi \in \Xi$, then
	for all $\varepsilon > 0$,
	\begin{align}
	\label{eq:probsup'}
	\Prob{\sup_{v \in \vadcsp}\, \norm[\hsp]{\hat{\Phi}_N(v)-\Phi(v)}
		\geq \varepsilon}
	\leq 
	2\mathcal{N}(\varepsilon/(4L); \vadcsp,\norm[\hsp]{\cdot})
	\eu^{-N\varepsilon^2 /(48\tau_G^2)}.
	\end{align}
\end{proposition}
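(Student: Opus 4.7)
The plan is to reduce the uniform quantity to a finite maximum by a standard covering argument and then apply \cref{prop:maxmean}, handling the random fluctuation of the Lipschitz constant of $\hat\Phi_N$ by a Cramér--Chernoff tail bound. The Lipschitz continuity of $\Phi$ with constant $L$, of $\hat\Phi_N$ with constant $\hat M_N$, and the measurability of $\sup_{v \in \vadcsp}\,\norm[\hsp]{\hat\Phi_N(v)-\Phi(v)}$ are already provided by \cref{lem:expecationgcontinuous}, so I can concentrate on the uniform bounds. Given $\nu > 0$, \cref{assumption:basicerrorestimate3} supplies a finite covering of $\vadcsp$ by closed $\norm[\hsp]{\cdot}$-balls of radius $\nu$ with centers $v_1,\ldots,v_K \in \vadcsp$, where $K = \mathcal{N}(\nu;\vadcsp,\norm[\hsp]{\cdot})$. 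For any $v \in \vadcsp$, picking a nearest center $v_k$ and applying the triangle inequality with the two Lipschitz estimates yields the discretization bound
\begin{align*}
\sup_{v \in \vadcsp} \norm[\hsp]{\hat\Phi_N(v) - \Phi(v)}
\leq (\hat M_N + L)\nu + \max_{1 \leq k \leq K} \norm[\hsp]{\hat\Phi_N(v_k) - \Phi(v_k)}.
\end{align*}

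Next I would bound the maximum on the right. For each $k$, the random vectors $W_{i,k} = G(\prox{\varphi/\alpha}{v_k},\xi^i) - \cE{G(\prox{\varphi/\alpha}{v_k},\xi)}$ are i.i.d.\ across $i$, mean zero, and satisfy $\cE{\cosh(\lambda\norm[\hsp]{W_{i,k}})} \leq \exp(\lambda^2\tau_G^2/2)$ by \cref{assumption:basicerrorestimate4}, so \cref{prop:maxmean} applies directly to $\hat\Phi_N(v_k) - \Phi(v_k) = (1/N)\sum_{i=1}^N W_{i,k}$. Taking expectation of the discretization bound, using $\cE{\hat M_N} = L$, and substituting the mean estimate \eqref{eq:maxmean} produces \eqref{eq:expsup}.

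For the tail estimate I would specialize to $\nu = \varepsilon/(4L)$ and split on whether $\hat M_N \leq 2L$. On $\{\hat M_N \leq 2L\}$ one has $(\hat M_N + L)\nu \leq 3\varepsilon/4$, so $\sup_{v \in \vadcsp} \norm[\hsp]{\hat\Phi_N(v)-\Phi(v)} \geq \varepsilon$ forces $\max_k \norm[\hsp]{\hat\Phi_N(v_k)-\Phi(v_k)} \geq \varepsilon/4$, and the tail estimate \eqref{eq:maxprob} at level $\varepsilon/4$ yields the $2\mathcal{N}(\varepsilon/(4L);\vadcsp,\norm[\hsp]{\cdot})\eu^{-N\varepsilon^2/(48\tau_G^2)}$ contribution. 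The complement is controlled via the sub-exponential hypothesis in \cref{assumption:basicerrorestimate6}: independence gives $\cE{\exp(\lambda N(\hat M_N - L))} \leq \exp(N\lambda^2\tau_M^2/2)$ for $\lambda \in (0,\Lambda_M)$, and a Cramér--Chernoff optimization of $\Prob{\hat M_N - L > L} \leq \inf_{\lambda \in (0,\Lambda_M)}\exp(-\lambda NL + N\lambda^2\tau_M^2/2)$ delivers $\eu^{-N\ell}$ with $\ell$ as in the statement. When $M(\xi) \leq L$ holds pointwise on $\Xi$, $\hat M_N \leq L$ almost surely, so $\Prob{\hat M_N > 2L} = 0$ and \eqref{eq:probsup'} remains.

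The main obstacle will be the case split in the Chernoff step: the minimizer of $\lambda \mapsto -\lambda L + \lambda^2\tau_M^2/2$ on $(0,\Lambda_M)$ coincides with the unconstrained minimizer $\lambda = L/\tau_M^2$ exactly when $L \leq \Lambda_M\tau_M^2$, producing the quadratic rate $L^2/(2\tau_M^2)$; otherwise the infimum is approached on the boundary at $\lambda \to \Lambda_M$, yielding at most the linear rate $\Lambda_M L/2$ after an additional estimation that uses $L \geq \Lambda_M \tau_M^2$. Matching these two regimes to the definition of $\ell$ in the statement is the only step requiring real care; everything else is bookkeeping around the covering argument and a direct invocation of \cref{prop:maxmean}.
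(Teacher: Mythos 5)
Your proposal is correct and follows essentially the same route as the paper: the same covering/discretization bound with Lipschitz constants $\hat M_N$ and $L$, the same reduction of the finite maximum to \cref{prop:maxmean} via $\hat\Phi_N(v_k)-\Phi(v_k)=\hat G_N(u_k)-\cE{G(u_k,\xi)}$ with $u_k=\prox{\varphi/\alpha}{v_k}$, and the same split on the event $\{\hat M_N< 2L\}$ with $\nu=\varepsilon/(4L)$. The only cosmetic difference is that the paper outsources the bound $\Prob{\hat M_N\geq 2L}\leq\eu^{-N\ell}$ to \cref{lem:subexponential_lipschitz} (citing a standard sub-exponential tail inequality), whereas you rederive the same Cram\'er--Chernoff estimate inline, with the correct case split on $L\lessgtr\Lambda_M\tau_M^2$.
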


Before establishing
\Cref{prop:uniformboundsoperator}, we illustrate how
it is used in \cref{sec:samplesizeestimates} to derive nonasymptotic sample size
estimates.
\begin{remark}
	\label{rem:basicerrorestimate5}
	Let the hypotheses of \Cref{prop:uniformboundsoperator} hold.
	Suppose that \wpone\ for each $N \in \naturals$, 
	$\{\, v \in \hsp \colon 
	\, \hat{\Phi}_N(v) + \alpha v= 0 \, \}$ 
	is contained in $\vadcsp$.
	Let $N \in \naturals$ and let $v_N \in \hsp$ be a measurable selection
	of 	$\{\, v \in \hsp \colon 
	\, \hat{\Phi}_N(v) + \alpha v= 0 \, \}$.
	Then \wpone,
	\begin{align*}
	\norm[\hsp]{\alpha v_N + \Phi(v_N)}
	&\leq \norm[\hsp]{\hat{\Phi}_N(v_N)-\Phi(v_N)} + 
	\norm[\hsp]{\hat{\Phi}_N(v_N)+\alpha v_N}
	\\
	&\leq \sup_{v \in \vadcsp}\, \norm[\hsp]{\hat{\Phi}_N(v)-\Phi(v)}.
	\end{align*}
	The term on the right-hand side can be estimated using
	\Cref{prop:uniformboundsoperator}.
\end{remark}

We establish \Cref{prop:uniformboundsoperator} using
\Cref{lem:subexponential_lipschitz},
which is a direct consequence of 
\cite[Thm.\ 5.1 on p.\ 26]{Buldygin2000}.
\begin{lemma}
	\label{lem:subexponential_lipschitz}
	Let $\rvv$ be an integrable, nonnegative random variable.
	Suppose that $\rvv-\cE{\rvv}$
	is sub-exponential with parameters $(\tau, \Lambda)$.
	We define $L = \cE{\rvv}$.
	Let $\ell = L^2/(2\tau^2)$
	if $L \leq \Lambda \tau^2$
	and $\ell = \Lambda L/2$  otherwise.
	If $\rvv_1, \rvv_2, \ldots$ are independent
	and each $\rvv_i : \Theta \to \real$ has the same distribution as
	$\rvv$, then for all $N \in \naturals$, 
	$\Prob{(1/N)\sum_{i=1}^N \rvv_i \geq 2 L}
	\leq \exp(-N\ell)$.
\end{lemma}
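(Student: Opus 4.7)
The plan is to apply the classical Chernoff--Cram\'er bound to the sum of the i.i.d.\ centered variables $\rvv_i - L$. First I would rewrite the event $\{(1/N)\sum_{i=1}^N \rvv_i \geq 2L\}$ as $\{\sum_{i=1}^N (\rvv_i - L) \geq NL\}$. Since the $\rvv_i$ are independent copies of $\rvv$ and $\rvv - \cE{\rvv}$ is sub-exponential with parameters $(\tau,\Lambda)$, independence gives, for every $\lambda \in (0,\Lambda)$,
\begin{align*}
\cE{\exp\bigl(\lambda \textstyle\sum_{i=1}^N (\rvv_i - L)\bigr)}
= \prod_{i=1}^N \cE{\exp(\lambda(\rvv_i - L))}
\leq \exp(N\lambda^2\tau^2/2),
\end{align*}
and Markov's inequality applied to the nonnegative random variable $\exp(\lambda \sum_{i=1}^N(\rvv_i - L))$ yields
\begin{align*}
\Prob{\textstyle\sum_{i=1}^N(\rvv_i - L) \geq NL}
\leq \exp\bigl(-N\lambda L + N\lambda^2\tau^2/2\bigr) \quad \text{for all } \lambda \in (0,\Lambda).
\end{align*}

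The second step is to optimize the exponent $\psi(\lambda) = -\lambda L + \lambda^2 \tau^2/2$ over $\lambda \in (0,\Lambda)$. Its unconstrained minimizer is $\lambda^{\star} = L/\tau^2$, with value $-L^2/(2\tau^2)$. If $L \leq \Lambda\tau^2$, then $\lambda^{\star} \in (0,\Lambda]$ and letting $\lambda \nearrow \lambda^{\star}$ delivers the bound $\exp(-NL^2/(2\tau^2)) = \exp(-N\ell)$. Otherwise, $\psi$ is strictly decreasing on $(0,\Lambda)$, and letting $\lambda \nearrow \Lambda$ gives $\psi(\lambda) \to -\Lambda L + \Lambda^2\tau^2/2$; the case hypothesis $\Lambda\tau^2 < L$ yields $\Lambda^2\tau^2/2 < \Lambda L/2$, so this limit is at most $-\Lambda L/2$, again matching $\exp(-N\ell)$.

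The only subtlety is that the sub-exponential hypothesis supplies the MGF bound only on the open interval $(-\Lambda,\Lambda)$, so in the second case the optimal $\lambda$ sits on the boundary; since the Markov estimate holds uniformly for $\lambda \in (0,\Lambda)$ and the left-hand probability does not depend on $\lambda$, one may pass to the infimum over this open interval, and continuity of $\lambda \mapsto \exp(N\psi(\lambda))$ makes the boundary limit harmless. Apart from this boundary bookkeeping, the argument is a textbook Chernoff bound for sums of i.i.d.\ sub-exponential summands, and combining the two cases gives the claimed estimate $\Prob{(1/N)\sum_{i=1}^N \rvv_i \geq 2L} \leq \exp(-N\ell)$.
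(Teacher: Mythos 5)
Your proof is correct and follows essentially the same route as the paper: the paper simply cites the two-regime sub-exponential tail bound from Buldygin--Kozachenko (Thm.\ 5.1) and substitutes $\varepsilon = \cE{\rvv} = L$, whereas you reprove that cited inequality from scratch via the standard Chernoff--Cram\'er argument and case split on whether the unconstrained minimizer $L/\tau^2$ lies in $(0,\Lambda)$. Your handling of the open-interval boundary by passing to the infimum is sound, so the expanded derivation matches the paper's conclusion exactly.
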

\begin{proof}
	Since $\rvv_i-\cE{\rvv}$ are independent 
	and  sub-exponential with parameters $(\tau, \Lambda)$, 
	we have 
	(see \cite[Thm.\ 5.1 on p.\ 26]{Buldygin2000}),
	\begin{align*}
	\Prob[\Big]{\frac{1}{N}\sum_{i=1}^{N} \rvv_i
		- \cE{\rvv} \geq \varepsilon}
	\leq 
	\begin{cases}
	\exp(-N\varepsilon^2/(2\tau^2)) & 
	\tif \quad 0 < \varepsilon \leq \Lambda \tau^2,\\
	\exp(-N\Lambda \varepsilon/2) 	&
	\tif \quad \varepsilon > \Lambda\tau^2.
	\end{cases}
	\end{align*}
	Choosing $\varepsilon = \cE{\rvv}$
	and using  the definition
	of $\ell$, we obtain the assertion.
\end{proof}

\begin{proof}[{Proof of \Cref{prop:uniformboundsoperator}}]
	The proof is inspired by those
	of \cite[Thms.\ 9.84 and 9.86]{Shapiro2021}.
	The Lipschitz property of $\Phi$ is provided by
	\Cref{lem:expecationgcontinuous}. 
	Fix $\nu > 0$.
	We define
	$K = \mathcal{N}(\nu; \vadcsp, \norm[\hsp]{\cdot}) < \infty$.
	By assumption, there exist
	$v_1, \ldots, v_K \in \hsp$
	such that for each $v \in \vadcsp$, we have
	$\norm[\hsp]{v-v_{k(v)}} \leq \nu$, 
	where $k(v) = \argmin_{1\leq k \leq K}\, 
	\norm[\hsp]{v-v_k}$.
	Furthermore, 
	$G(u,\xi)-\cE{G(u,\xi)}$ has zero mean
	for each $u \in \adhsp$.
	Using \Cref{lem:expecationgcontinuous}, we find that
	\begin{align*}
	\begin{aligned}
	\norm[\hsp]{\hat{\Phi}_N(v)-\Phi(v)}
	&\leq 	\norm[\hsp]{\hat{\Phi}_N(v)-\hat{\Phi}_N(v_{k(v)})}
	+ \norm[\hsp]{\hat{\Phi}_N(v_{k(v)})-\Phi(v_{k(v)})}
	\\
	& \quad + \norm[\hsp]{\Phi(v_{k(v)})-\Phi(v)}
	\\
	&\leq \hat{M}_N \norm[\hsp]{v-v_{k(v)}}
	+ \norm[\hsp]{\hat{\Phi}_N(v_{k(v)})-\Phi(v_{k(v)})}
	+ L\norm[\hsp]{v-v_{k(v)}}
	\\
	&\leq \hat{M}_N \nu
	+ \norm[\hsp]{\hat{\Phi}_N(v_{k(v)})-\Phi(v_{k(v)})}
	+ L \nu.
	\end{aligned}
	\end{align*}
	Defining $u_k = \prox{\varphi/\alpha}{v_k}$,
	we obtain $u_k \in \adhsp$ and
	\begin{align*}
	\begin{aligned}
	\sup_{v \in \vadcsp}\, \norm[\hsp]{\hat{\Phi}_N(v)-\Phi(v)}
	&\leq \hat{M}_N \nu
	+ \max_{1\leq k \leq K}\, 
	\norm[\hsp]{\hat{G}_N(u_k)- \cE{G(u_k,\xi)}}
	+ L \nu.
	\end{aligned}
	\end{align*}
	Taking expectations, 
	using  $\cE{\hat{M}_N} = L$
	and utilizing \Cref{prop:maxmean}, we obtain
	\eqref{eq:expsup}. 
	
	We fix $\varepsilon > 0$
	and choose $\nu = \varepsilon/(4L)$
	as in \cite[p.\ 471]{Shapiro2021}.
	\Cref{prop:maxmean} ensures 
	\begin{align*}
	\Prob{\max_{1\leq k \leq K}\, 
		\norm[\hsp]{\hat{G}_N(u_k)- \cE{G(u_k,\xi)}}
		\geq \varepsilon/4
	}
	\leq 2 K \eu^{-N\varepsilon^2 /(48\tau_G^2)}.
	\end{align*}
	 \Cref{lem:subexponential_lipschitz}
	yields $\Prob{\hat{M}_N \geq 2 L} \leq \exp(-N\ell)$.
	If $\hat{M}_N < 2L$, then
	$(\hat{M}_N+ L)\nu < 3L\nu$.
	Now the union bound implies the first tail bound.
	If $M(\xi) \leq L$, then
	$\Prob{\hat{M}_N \geq 2 L} = 0$
	and we obtain \eqref{eq:probsup'}. 
\end{proof}

\section*{Acknowledgments}
JM thanks Professor Alexander Shapiro for valuable
discussions about the SAA approach.
We thank the anonymous referees for their helpful comments.

\bibliographystyle{siamplain}
\bibliography{JMilz_MUlbrich_2022_SAA4PDE_Complexity_v3.bbl}
\end{document}